
\documentclass[twoside]{amsart}

\newcommand{\sbc}{\on{sbc}}
\newcommand{\bc}{\on{bc}}

\newcommand{\Asai}{\on{Asai}}
\newcommand{\Mat}{\on{Mat}}
\newcommand{\Rog}{\on{Rog}}
\newcommand{\AI}{\on{AI}_{E/F}}

\newcommand{\on}{\operatorname}
\newcommand{\ol}{\overline}

\renewcommand{\Re}{\on{Re}}

\newcommand{\gm}{\gamma}

\newcommand{\quo}[1]{#1(F)\bs #1(\A)}
\newcommand{\il}{\int\limits_}
\newcommand{\iq}[1]{\il{\quo{#1}}}

\newcommand{\f}{\mathfrak}
\renewcommand{\c}{\mathcal}
\newcommand{\into}{\hookrightarrow}

\newcommand{\Fr}{\operatorname{Fr}}

\newcommand{\C}{\mathbb{C}}

\newcommand{\A}{\mathbb{A}}
\newcommand{\Q}{\mathbb{Q}}
\newcommand{\Z}{\mathbb{Z}}
\newcommand{\R}{\mathbb{R}}

\newcommand{\bpm}{\begin{pmatrix}}
\newcommand{\epm}{\end{pmatrix}}
\newcommand{\bsm}{\begin{smallmatrix}}
\newcommand{\esm}{\end{smallmatrix}}
\newcommand{\bspm}{\left(\begin{smallmatrix}}
\newcommand{\espm}{\end{smallmatrix}\right)}
\newcommand{\bbm}{\begin{bmatrix}}
\newcommand{\ebm}{\end{bmatrix}}

\usepackage{mathrsfs}
\usepackage{amsfonts}
\usepackage{amssymb}
\usepackage{amssymb}
\usepackage{amssymb}
\usepackage{amssymb}
\usepackage{amssymb}
\usepackage{amssymb}
\usepackage{amssymb}
\usepackage{amssymb}
\usepackage{amsmath}
\usepackage{amsthm}
\usepackage[all]{xy}

\usepackage{lastpage}

\RequirePackage{amsmath} \RequirePackage{amssymb}
\usepackage{amscd,latexsym,amsthm,amsfonts,amssymb,amsmath,amsxtra}
\usepackage
[
colorlinks=true, 
linkcolor=blue,
urlcolor=blue,
bookmarks=true,
bookmarksopen=true,
citecolor=blue,
%hypertex
]
{hyperref}
%cohomology groups

% newcommand bb
    \newcommand{\BA}{{\mathbb {A}}} 
    \newcommand{\BC}{{\mathbb {C}}} 
     
    \newcommand{\BG}{{\mathbb {G}}}

     \newcommand{\BR}{{\mathbb {R}}}

% newcommand  scr

% newcommand cal

    \newcommand{\CS}{{\mathcal {S}}} 
     \newcommand{\CV}{{\mathcal {V}}}
    \newcommand{\CW}{{\mathcal {W}}}

    % newcommand frak

     \newcommand{\fo}{{\mathfrak{o}}}  \newcommand{\fp}{{\mathfrak{p}}}

     \newcommand{\fu}{{\mathfrak{u}}}

       \newcommand{\fP}{{\mathfrak{P}}}

 % newcommand :rm

     \newcommand{\RP}{{\mathrm {P}}}
     
    \newcommand{\RS}{{\mathrm {S}}} 
    \newcommand{\RU}{{\mathrm {U}}}

    \newcommand{\lenth}{{\mathrm {\lenth}}}

    \newcommand{\Ad}{{\mathrm{Ad}}}

    \newcommand{\Gal}{{\mathrm{Gal}}} \newcommand{\GL}{{\mathrm{GL}}}
    \newcommand{\Hom}{{\mathrm{Hom}}} 
    \newcommand{\Ind}{{\mathrm{Ind}}} \newcommand{\ind}{{\mathrm{ind}}}

    \newcommand{\ord}{{\mathrm{ord}}}

    \renewcommand{\Re}{{\mathrm{Re}}} 
    \newcommand{\Res}{{\mathrm{Res}}}

 \newcommand{\Vol}{{\mathrm{Vol}}}

\newcommand{\supp}{\mathrm{supp}}

 \newcommand{\SL}{{\mathrm{SL}}}
 \newcommand{\SO}{{\mathrm{SO}}}
\newcommand{\SU}{{\mathrm{SU}}}

\newcommand{\vol}{{\mathrm{vol}}}

\newcommand{\diag}{{\mathrm{diag}}}\newcommand{\ch}{{\mathrm{ch}}}

    \newcommand{\bx}{{\bf {x}}}   \newcommand{\bm}{{\bf {m}}}

    \newcommand{\wt}{\widetilde} \newcommand{\wh}{\widehat}

    \newcommand{\wpair}[1]{\left\{{#1}\right\}}

    \newcommand{\incl}{\hookrightarrow}
    
     \newcommand{\ra}{\rightarrow}

    \newcommand{\bs}{\backslash}
    
    \theoremstyle{plain}
    
       \newtheorem*{theorem*}{Theorem}

    \newtheorem{thm}{Theorem}[section] \newtheorem{cor}[thm]{Corollary}
    \newtheorem{lem}[thm]{Lemma}  \newtheorem{prop}[thm]{Proposition}
    \newtheorem {conj}[thm]{Conjecture} \newtheorem{defn}[thm]{Definition}
    \newtheorem {rmk}[thm]{Remark}
    
    %accented words

    \numberwithin{equation}{section}

\newcommand{\ssm}{\smallsetminus}
\usepackage[top=1in, bottom=1in, left=1.25in, right=1.25in]{geometry}

\title{Adjoint $L$-functions for $\GL(3)$ and $\RU_{2,1}$}
\author{Joseph Hundley}
\address{Department of Mathematics, State University of New York at Buffalo, Buffalo, NY, USA}
\email{ jahundle@buffalo.edu}
\author{Qing Zhang}
\address{Department of Mathematics and Statistics, University of Calgary, Canada}
\email{qing.zhang1@ucalgay.ca}
%\date{\today}
\subjclass[2010]{11F70, 22E50, 11F66}
\keywords{Adjoint L-function, Whittaker functions, general linear group, unitary group, endoscopy, stable base change}

\begin{document}

\maketitle

\begin{abstract}
We show that the finite part of the adjoint $L$-function (including contributions from all non-archimedean places, including ramified places) is holomorphic in $\Re(s) \ge 1/2$ for a cuspidal automorphic representation of $\GL_3$ over a number field. This improves the main result of \cite{H18}. We obtain more general results for twisted adjoint $L$-functions of both $\GL_3$ and quasisplit unitary groups. For unitary groups, we explicate the relationship between poles of twisted adjoint $L$-functions, endoscopy, and the structure of the stable base change lifting.
\end{abstract}

\tableofcontents

%\thanks{The first named author was supported by NSA grants H98230-15-1-0234 and  H98230-16-1-0125.}

\section{Introduction}
\subsection{Motivation and Background} Let $F$ be a number field and $\BA$ be its ring of adeles. Let $H$ be a connected, reductive $F$-group, isomorphic to $\GL_3$ over the algebraic closure $\ol F$ of $F.$ The $L$-group $^LH$ of $H$ is then the semi-direct product of $\GL_3(\C)$ with a Galois or Weil group, depending on which form of the $L$-group we consider (cf.\cite{BorelCorvallis}). Differentiating the action of $^LH$ on its own identity component by conjugation, we obtain an action 
on $\f{gl}_3(\C).$ The subspace $\f{sl}_3(\C)$ is preserved. We denote the 
action of $^LH$ on $\f{sl}_3(\C)$ by 
$\Ad.$

In this paper we study the twisted adjoint $L$-function $L(s, \pi, \Ad \times \chi)$ where $\chi$ is a Hecke character. This work is motivated by the following simple conjecture regarding the untwisted $L$-function in the split case. 
\begin{conj}\label{conj: adjoint GL3 is entire}
Let $\pi$ be an irreducible cuspidal automorphic representation of $\GL_n(\A)$, where $\A$ is the adele ring 
of a global field. Let $\Ad$ denote the adjoint representation of $\GL_n(\BC)$ on $\f{sl}_n(\C).$ Then the 
global Langlands $L$-function $L(s, \pi, \Ad)$ is entire. 
\end{conj}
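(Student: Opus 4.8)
The plan is to reduce the conjecture to a divisibility statement about the Rankin--Selberg square $L$-function and then to attack that statement by a global integral representation together with the functional equation. The starting point is the identity of completed $L$-functions
\[
\Lambda(s,\pi\times\tilde\pi)\;=\;\Lambda(s,\pi,\Ad)\,\Lambda_F(s),
\]
where $\Lambda_F$ is the completed Dedekind zeta function; this holds on the nose, archimedean $\Gamma$-factors included, because under the local Langlands correspondence $\mathrm{std}\otimes\mathrm{std}^\vee\cong\Ad\oplus\mathbf 1$ as representations of each local Weil--Deligne group. By Jacquet--Shalika (and Moeglin--Waldspurger), for $\pi$ cuspidal $\Lambda(s,\pi\times\tilde\pi)$ is meromorphic with only simple poles, at $s=0$ and $s=1$; together with the classical zero-free region of $\zeta_F$ this forces $L(s,\pi,\Ad)$ to continue to a meromorphic function whose poles all lie among the nontrivial zeros of $\zeta_F$, hence inside the open strip $0<\Re(s)<1$ (the poles at $s=0,1$ cancel those of $\Lambda_F$). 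In this form the conjecture is equivalent to: $\Lambda(s,\pi\times\tilde\pi)$ vanishes at each nontrivial zero of $\zeta_F$ to at least the order to which $\zeta_F$ does, i.e.\ the quotient $\Lambda(s,\pi\times\tilde\pi)/\Lambda_F(s)$ is entire.

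Next I would remove the putative poles with $\Re(s)\ge 1/2$ by a global integral representation for $L(s,\pi,\Ad)$ --- a Rankin--Selberg or Langlands--Shahidi construction in which the analytic behaviour of the $L$-function is governed by that of an Eisenstein series and the finitely many auxiliary $L$-functions in its constant term, all of whose poles lie at a bounded set of real points, so that after dividing out those contributions holomorphy in $\Re(s)\ge 1/2$ drops out. For $\GL_3$ this is exactly the mechanism of the present paper, which yields holomorphy of the finite part in $\Re(s)\ge 1/2$ with ramified places included. For general $n$ one would first have to construct such an integral, and I expect this to be the principal obstacle: no global integral isolating the full adjoint $L$-function of $\GL_n$ for $n\ge 4$ is presently known (relatedly, $\GL_n$ does not occur as a Levi factor with unipotent radical $\f{sl}_n$).

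Granting holomorphy in $\Re(s)\ge 1/2$ for every cuspidal $\pi$ on $\GL_n$, I would then reflect using the functional equations $s\leftrightarrow 1-s$ of $\Lambda(s,\pi\times\tilde\pi)$ and of $\Lambda_F$. Writing $L_f(s,\pi,\Ad)=\Lambda(s,\pi\times\tilde\pi)/(\Lambda_F(s)\,L_\infty(s,\pi,\Ad))$ and using that $L_\infty(s,\tilde\pi,\Ad)=L_\infty(s,\pi,\Ad)$ (the archimedean adjoint parameters being symmetric under $\pi\mapsto\tilde\pi$), a pole at $s_0$ with $\Re(s_0)<1/2$ is seen to conflict with the order of vanishing of $\Lambda(s,\pi\times\tilde\pi)$ at $1-s_0$ that holomorphy there already guarantees --- a contradiction, provided $L_\infty(s,\pi,\Ad)$ has at $s_0$ a pole of order at least its order at $1-s_0$. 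When $\pi_\infty$ is tempered this is automatic, since the archimedean adjoint $\Gamma$-factors have no poles in the open strip at all, and one concludes; in general one must control the discrepancy using the Luo--Rudnick--Sarnak bound $\theta_n<1/2$ together with the location of the zeros of $\zeta_F$, or invoke the Ramanujan conjecture at the archimedean places. Since $L_\infty(s,\pi,\Ad)$ is a product of $\Gamma_\R$- and $\Gamma_\C$-factors and hence zero-free, entireness of the finite part is then equivalent to entireness of $\Lambda(s,\pi,\Ad)$, completing the argument.

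The two genuine obstacles above --- an integral representation for $\Ad$ on $\GL_n$, $n\ge 4$, and the non-tempered archimedean case --- both dissolve if one instead proves $\Ad$-functoriality $\GL_n\to\GL_{n^2-1}$: then $\Lambda(s,\pi,\Ad)=\Lambda(s,\Ad\pi)$ is the standard $L$-function of an isobaric automorphic representation, and $\Ad\pi$ contains no trivial constituent (one more would make the pole of $\Lambda(s,\pi\times\tilde\pi)$ at $s=1$ non-simple, contradicting Jacquet--Shalika), so $\Lambda(s,\pi,\Ad)$ is entire by Godement--Jacquet. This is exactly how the case $n=2$ is known, through the Gelbart--Jacquet symmetric-square lift, and how the conjecture follows over a function field from Lafforgue's global correspondence (decompose the $(n^2-1)$-dimensional adjoint of the attached Galois representation into irreducibles, each automorphic and none trivial). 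I therefore regard the conjecture, in essence, as the assertion that adjoint functoriality holds for $\GL_n$ --- its analytic content for $\GL_3$ being the object of this paper --- and I expect establishing that functoriality for $n\ge 3$ to be the real difficulty.
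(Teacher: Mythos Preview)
This statement is Conjecture~\ref{conj: adjoint GL3 is entire} of the paper; the paper does not prove it, and it remains open for $n\ge 3$ over number fields. What the paper establishes is a partial result toward the case $n=3$: holomorphy of the \emph{finite part} $L_f(s,\pi,\Ad)$ in $\Re(s)\ge \tfrac12$ (Theorem~\ref{thm4.2}). Your proposal is likewise not a proof, and you are explicit about this: you isolate two genuine obstructions --- the lack of an integral representation for $\Ad$ on $\GL_n$ with $n\ge 4$, and the archimedean analysis needed to pass from $L_f$ to the completed $\Lambda$ when $\pi_\infty$ is non-tempered --- and correctly identify them as the substance of the problem. (The paper itself notes integral representations for $n=4,5$ in \cite{BumpGinzburg,GH3}, so the first obstruction is really about $n\ge 6$ and about carrying out the local analysis for $n=4,5$.)

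Your reductions --- the identity $\Lambda(s,\pi\times\tilde\pi)=\Lambda(s,\pi,\Ad)\,\Lambda_F(s)$, the reformulation as divisibility at the nontrivial zeros of $\zeta_F$, and the ultimate equivalence with $\Ad$-functoriality --- agree with the paper's own framing in the introduction. One point deserves sharpening. Even granting holomorphy of $L_f$ in $\Re(s)\ge\tfrac12$, your reflection step does not close without further input: a putative pole of $\Lambda(s,\pi,\Ad)$ at a zero $s_0$ of $\Lambda_F$ with $\Re(s_0)\ge\tfrac12$ is excluded only if $L_\infty(s,\pi,\Ad)$ is finite at $s_0$. The Luo--Rudnick--Sarnak bound gives $|\Re(\mu_i-\mu_j)|<1$, which still allows poles of the archimedean $\Gamma$-factors anywhere in the open critical strip, and nothing known prevents such a pole from coinciding with a zero of $\zeta_F$. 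So for $n=3$ the passage from Theorem~\ref{thm4.2} to the full conjecture genuinely requires either archimedean temperedness or a separate archimedean argument; compare the paper's remarks on \cite{Tianthesis,Ti18,Ti18c} and on \cite{BuZh}, where the level-one case succeeds precisely because the archimedean parameters are then tempered. Your final reduction to $\Ad$-functoriality is the expected endgame and matches how $n=2$ (Gelbart--Jacquet) and the function-field case (via Lafforgue) are handled; for $n\ge 3$ over number fields this functoriality is not known.
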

In the case $n=2,$ Conjecture \ref{conj: adjoint GL3 is entire} was proved by Gelbart-Jacquet \cite{GelbartJacquet}, generalizing a method of Shimura \cite{Shimura}. A different proof was given in \cite{JZ}. Flicker \cite{Flicker} proved that under a certain assumption about ratios of Hecke $L$ series (see ``$(\on{Ass}; E, \omega),$'' \cite[p. 233]{Flicker}), Conjecture \ref{conj: adjoint GL3 is entire} holds if $\pi\cong \otimes_v \pi_v$ with at least one $\pi_v$ supercuspidal. More precisely, he proved that for such $\pi,$ the twisted adjoint $L$-function $L(s, \pi, \Ad \times \chi)$ is entire unless $\chi$ is nontrivial and $\pi \cong \pi \otimes \chi.$ This proof was based on a trace formula. Our approach is based on an integral representation, which was pioneered in \cite{G}, and a method of ruling out poles which was pioneered in \cite{GJ}.

Note that the action of $\GL_n$ on the space of $n\times n$ matrices may be regarded as the tensor 
product of the standard representation with its own dual. It decomposes as the direct sum of our 
adjoint representation, and the one dimensional span of the identity matrix equipped with trivial 
action. It follows that 
\begin{equation} \label{Ad = RS/zeta}
L( s, \pi, \Ad\times \chi) = \frac{L( s, \pi \times \wt \pi\times \chi )}{L(s, \chi)},
\end{equation}
where $\wt \pi$ is the contragredient of $\pi.$
From this perspective, Conjecture \ref{conj: adjoint GL3 is entire} may be 
viewed as saying that 
$L(s, \pi \times \wt \pi\times \chi)$ should be ``evenly divisible'' by $L(s, \chi).$
More concretely, it says that $L(s, \pi \times \wt \pi)$ must vanish at each zero 
of $\zeta(s),$ to at least the same order. 
(Cf. \cite[p. 234]{Flicker}.)

As explained in \cite{JngR}, Conjecture \ref{conj: adjoint GL3 is entire} is expected to be related to the conjecture that $\zeta_K(s)/\zeta_F(s)$ is entire for a field extension $K/F$. As explained in the introduction of \cite{BumpGinzburg}, since $\zeta_K(s)/\zeta_F(s)$ is a product of Artin $L$-functions, the latter conjecture is a consequence of the Artin conjecture.
This relationship is one display in 
Flicker's conditional result. For more details about this relationship, see \cite{JngR}.

In the case when the global field is $\Q,$ Conjecture \ref{conj: adjoint GL3 is entire} may also be viewed from the point of view of the Selberg class. We recall that the Selberg class is a class of meromorphic functions $\C \to \C$ introduced by Selberg in \cite{Selberg}. For $\pi$ a cuspidal automorphic representation of $\GL_n(\A),$ the finite $L$-function $L_f (s, \pi \times \wt \pi)$ will be an element of the Selberg class, unless $\pi$ is a counterexample to the Ramanujan conjecture. It then would follow from the conjectures in \cite{Selberg} that $L_f(s, \pi \times \wt \pi)$ must be the product of the Riemann zeta function and another element of the Selberg class (cf. remark ii at the bottom of p. 370 of \cite{Selberg}). In other words it would follow that $L_f(s, \pi, \Ad)$ is itself an element of the Selberg class. But then it must be entire since elements of the Selberg class have no poles except possibly at one. 

\subsection{Main results}
For $n>2$ the adjoint $L$-function of $\GL_n$ is  not accessible via the Langlands-Shahidi method. Some information can be obtained through the relationship \eqref{Ad = RS/zeta} with the better-understood Rankin-Selberg  $L$-function. In particular, one can get a global functional equation of the adjoint $L$-function $L(s,\pi,\Ad\times \chi)$ by the global functional equations of $L(s,\pi\times \wt \pi\times \chi)$ and $L(s,\chi)$. To obtain further information of the adjoint $L$-function in the case $n=3$, our main tool here is an integral representation pioneered in \cite{G} together with a method of ruling out poles pioneered in \cite{GJ}.

Ginzburg's local zeta integral is based on an embedding of $\SL_3$ into a split exceptional group of type $G_2.$ The argument was adapted in \cite{H12} to apply to special quasisplit unitary group $\SU_{2,1}$ (which depends on a fixed quadratic extension of $F$), which also embed into $G_2.$ Let $H$ be one of these groups, regarded as a subgroup of $G_2.$ Then Ginzburg's global zeta integral is given by 
$$Z(\varphi, f_s)=\int_{H(F) \bs H(\A)} \varphi(g) E(g,f_s) \, dg,$$
where $\varphi$ is a cuspidal automorphic form on $\GL_3(\BA)$ or $\RU_{2,1}(\BA)$ in a fixed irreducible cuspidal automorphic representation $\pi$  and $E(g, f_s)$ is an Eisenstein series on $G_2$. See \cite{G, H18} or $\S$\ref{sec: split main theorem} for more details. In the split case (when $H=\SL_3$), among other things, Ginzburg proved that the above local zeta integral is Eulerian
$$Z(\varphi,f_s)=\prod_v Z^*(W_v,f_{s,v}),$$
with
$$Z^*(W_v,f_{s,v})=L(3s,\chi_v)L(6s-2,\chi_v^2)L(9s-3,\chi_v^3)\int_{N_2(F_v)\setminus H(F_v)}W_v(g)f_s(w_\beta g)dg,$$
where $\chi=\otimes_v\chi_v$ is a character of $F^\times\setminus \BA^\times$ which is part of the datum of the Eisenstein series, $W_v$ is the Whittaker function associated with local component of $\varphi$,  $N_2$ is certain subgroup of $H=\SL_3$, and $w_\beta$ is the Weyl element associated with the long simple root $\beta$ of $G_2$.  Ginzburg also showed that at unramified places, the local zeta integral $Z^*(W_v,f_{s,v})$ represents the adjoint $L$-function $L(s,\pi_v,\Ad\times \chi_v)$.

It is worth noting that both $Z(\varphi, f_s)$ depends only on the restriction of $\varphi$ to $\SL_3,$ and hence that one could, in principle begin with an automorphic representation of $\SL_3(\A)$ instead of $\GL_3(\A).$ However, the unfolding argument in \cite{G} results in a particular Whittaker integral of $\varphi.$ Thus, one must restrict attention to representations which are globally generic with respect to this particular character. Now, each cuspidal automorphic representation of $\SL_3(\A)$ may be obtained as one of the components in the restriction of some cuspidal automorphic representation of $\GL_3(\A)$ (unique up to twist). As explained in \cite[p.120]{BumpGinzburg}, the phenomenon that the integral $Z(\varphi,f_s)$ only depends on its restriction to $\SL_3(\A)$ is a reflection of the fact that the the adjoint representation of $\GL_3(\BC)$ factors through $\RP\GL_3(\BC)$, the $L$-group of $\SL_3$. Similarly, the local zeta integral $Z^*(W_v,f_{s,v})$ also only depends on $ W_v|_{\SL_3(F_v)}$.

To obtain the holomorphy of $L(s,\pi,\Ad\times \chi)$, on one hand, one needs to analyze the properties of the global integral $Z(\varphi,f_s)$; on the other hand, one needs to analyze the local zeta integral $Z^*(W_v,f_{s,v})$ at every ramified place $v$. 
Because of the functional equation, 
it suffices to rule out poles in 
the half plane $\Re(s) \ge \frac 12.$
After the pioneering work of Ginzburg \cite{G} and Ginzburg-Jiang \cite{GJ}, some progress in this direction was obtained in \cite{H18}. The goal of our first main result in this paper is to extend \cite[Theorem 6.1]{H18}, which treats the partial $L$-function $L^S(s,\pi,\Ad\times \chi)=\prod_{v\notin S}L(s,\pi_v,\Ad\times \chi_v)$ attached to a finite set $S=S(\pi,\chi)$ of places of $F$. Here $S(\pi,\chi)$ contains the set of all infinite places $S_\infty$ and all of the ramified places of $\pi$ and $\chi$. The main result in \cite{H18} states that for $\chi =1$ or $\chi$ unitary and $\pi \not \cong \pi \times \chi,$ the function $L^S(s,\pi,\Ad\times \chi)$ has no poles on the region $\Re(s)\ge \frac{1}{2}.$ However, during the referee process for this paper, a gap in the proof of the main theorem in \cite{H18} was pointed out. In this paper, we first explain how to close this gap, and then extend \cite[Theorem 6.1]{H18} to the finite part $L$-function $L_f(s,\pi,\Ad\times \chi)=L^{S_\infty}(s, \pi, \Ad \times \chi)$:
 \begin{thm}[Theorem \ref{thm4.2}]\label{thm: main for split case}
 Let $\chi$ be a unitary Hecke character of $F^\times\setminus \BA^\times$ and $\pi$ be an irreducible cuspidal automorphic representation of $\GL_3(\BA)$, then the finite part of the adjoint $L$-function $L_f(s,\pi, \Ad\times \chi)$ is holomorphic on the region $\Re(s)\ge 1/2$, except for a simple pole at $\Re(s)=1$ when $\chi$ is non-trivial and 
$\pi \cong \pi \otimes \chi$ $($which forces $\chi$ to be cubic$)$.
 \end{thm}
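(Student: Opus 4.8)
The plan is to reduce Theorem~\ref{thm: main for split case} to the partial-$L$-function statement \cite[Theorem~6.1]{H18} (with its gap repaired) by a careful local analysis of the Ginzburg zeta integral $Z^*(W_v,f_{s,v})$ at the finitely many ramified non-archimedean places $v\in S\setminus S_\infty$. Recall from the introduction that the global integral $Z(\varphi,f_s)$ is entire away from the poles of the Eisenstein series $E(g,f_s)$, that it factors as $\prod_v Z^*(W_v,f_{s,v})$, and that at unramified places $Z^*(W_v,f_{s,v})$ computes $L(s,\pi_v,\Ad\times\chi_v)$ divided by the normalizing product of Hecke $L$-factors appearing in the definition of $Z^*$. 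The key point for passing from $L^S$ to $L_f$ is therefore: at each ramified finite place $v$, one must exhibit, for a generic vanishing cycle of $s$, a choice of local Whittaker vector $W_v$ and local section $f_{s,v}$ for which $Z^*(W_v,f_{s,v})$ is a nonzero holomorphic function near that point and the ratio $Z^*(W_v,f_{s,v})/L(s,\pi_v,\Ad\times\chi_v)$ is holomorphic. Equivalently one shows that the family $\{Z^*(W_v,f_{s,v})\}$, as $W_v,f_{s,v}$ range over all admissible choices, spans a fractional ideal of $\C[q_v^{\pm s}]$ containing $L(s,\pi_v,\Ad\times\chi_v)$, so that $L(s,\pi_v,\Ad\times\chi_v)$ divides the local zeta integral. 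This is the standard ``local GCD'' mechanism; it is exactly how one upgrades a partial-$L$-function result to a finite-$L$-function result in Rankin--Selberg-type theories.

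Concretely I would proceed as follows. First, repair the gap in \cite{H18}: isolate the erroneous step in the proof of \cite[Theorem~6.1]{H18} and replace it with a corrected argument, so that we may take as given that $L^S(s,\pi,\Ad\times\chi)$ is holomorphic in $\Re(s)\ge\tfrac12$ away from the stated pole at $s=1$. (This sub-result is treated in its own right earlier in the paper, so I quote it.) Second, establish the divisibility $L(s,\pi_v,\Ad\times\chi_v)\mid Z^*(W_v,f_{s,v})$ at each ramified finite $v$: using the explicit formula $Z^*(W_v,f_{s,v})=L(3s,\chi_v)L(6s-2,\chi_v^2)L(9s-3,\chi_v^3)\int_{N_2(F_v)\backslash H(F_v)}W_v(g)f_{s,v}(w_\beta g)\,dg$, one analyzes the inner integral by the usual asymptotics of Whittaker functions on $H(F_v)$ (Jacquet--Langlands/Casselman--Shalika-type expansions) and the support of $f_{s,v}(w_\beta\,\cdot\,)$ near the open Bruhat cell. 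The poles of the inner integral in $q_v^{-s}$ come from the growth of $W_v$ along the torus, whose exponents are governed by the Satake/Langlands parameters of $\pi_v$, and one checks these are accounted for by $L(s,\pi_v,\Ad\times\chi_v)$; conversely, taking $W_v$ supported near the identity and $f_{s,v}$ a suitable bump section, the normalizing Hecke factors $L(3s,\chi_v)L(6s-2,\chi_v^2)L(9s-3,\chi_v^3)$ and the generic nonvanishing of the inner integral produce enough poles to realize all of $L(s,\pi_v,\Ad\times\chi_v)$. Third, combine: given a point $s_0$ with $\Re(s_0)\ge\tfrac12$, $s_0\ne1$, choose data so that $Z(\varphi,f_s)/\big(L^S(s,\pi,\Ad\times\chi)\prod_{v\in S_\infty}Z^*(W_v,f_{s,v})\big)=\prod_{v\in S\setminus S_\infty}Z^*(W_v,f_{s,v})/L(s,\pi_v,\Ad\times\chi_v)$ is holomorphic and nonzero at $s_0$; since $Z(\varphi,f_s)$ is holomorphic there (the Eisenstein series $E(g,f_s)$ has no poles in that region for the relevant inducing data, which must be verified) and $L^S$ is holomorphic there, it follows that $L_f(s,\pi,\Ad\times\chi)=L^S(s,\pi,\Ad\times\chi)\prod_{v\in S\setminus S_\infty}L(s,\pi_v,\Ad\times\chi_v)$ is holomorphic at $s_0$. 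Finally, handle the pole at $s=1$: when $\chi$ is nontrivial with $\pi\cong\pi\otimes\chi$, \eqref{Ad = RS/zeta} identifies the pole order of $L_f(s,\pi,\Ad\times\chi)$ at $s=1$ with the difference of pole orders of $L_f(s,\pi\times\wt\pi\times\chi)$ and $L_f(s,\chi)$; since $\pi\cong\pi\otimes\chi$ the Rankin--Selberg $L$-function $L(s,\pi\times\wt\pi\times\chi)=L(s,(\pi\otimes\chi)\times\wt\pi)$ has a simple pole at $s=1$ while $L(s,\chi)$ is finite and nonzero there (as $\chi\ne1$), giving a simple pole; the cubic condition on $\chi$ follows from $\pi\cong\pi\otimes\chi\cong\pi\otimes\chi^3$ forced by iterating and comparing central characters.

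The main obstacle I anticipate is the local divisibility step at ramified places, i.e.\ showing $L(s,\pi_v,\Ad\times\chi_v)$ divides $Z^*(W_v,f_{s,v})$ for \emph{all} (including highly ramified, non-generic-parameter) $\pi_v$. Unlike the Rankin--Selberg case for $\GL_n\times\GL_m$, the relevant local integral here lives on the $G_2$-side via the embedding of $H$, the space $N_2(F_v)\backslash H(F_v)$ is not a mirabolic-type homogeneous space, and the intertwining-operator normalization of $f_{s,v}$ interacts with $w_\beta$ in a way that is delicate to control uniformly. One likely has to treat separately the case where $\pi_v$ is a subquotient of an unramified or minimally-ramified principal series (where Casselman--Shalika-type formulas apply and the computation is essentially Ginzburg's) versus the genuinely supercuspidal or otherwise ramified case, where one instead argues via the asymptotic expansion of $W_v$ and a limiting/density argument to identify the local integral's poles with $\Gamma$-factors of $\Ad\times\chi_v$. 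A secondary technical point is confirming that the Eisenstein series $E(g,f_s)$ on $G_2$ has no poles in $\Re(s)\ge\tfrac12$ for the inducing data used, so that holomorphy of $Z(\varphi,f_s)$ in that region is available on the nose.
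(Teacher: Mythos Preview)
Your global strategy matches the paper's: reduce to \cite[Theorem~6.1]{H18} for $L^S$, then handle the finitely many places $v\in S\smallsetminus S_\infty$ via the factorization $Z(\varphi,f_s)=\prod_v Z^*(W_v,f_{s,v})$ together with holomorphy of the global integral. But your local step is missing the key structural input, and your anticipated obstacle is mislocated.

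The paper does \emph{not} attempt a general local-GCD argument. Instead it splits according to whether $\pi_v$ is tempered. If $\pi_v$ is tempered---and this includes every supercuspidal, so your worry about the ``genuinely supercuspidal or otherwise ramified case'' evaporates---then $L(s,\pi_v,\Ad\times\chi_v)=L(s,(\chi_v\pi_v)\times\wt\pi_v)/L(s,\chi_v)$ already has no poles in $\Re(s)>0$ by tempered Rankin--Selberg theory \cite[Proposition~8.4]{JPSS83}; nothing beyond the elementary nonvanishing Lemma~\ref{lemnonvanishing} is needed at such places. The actual hard case is $\pi_v$ unitary and \emph{non-tempered}. Since $\pi$ is cuspidal each $\pi_v$ is unitary, and by the classification of unitary representations of $\GL_3(F_v)$ a non-tempered such $\pi_v$ is a twist of $\Ind_{B_3}^{\GL_3}(|\cdot|^\alpha\otimes\mu_2\otimes|\cdot|^{-\alpha})$ with $0<\alpha<\tfrac12$. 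Two of the three resulting sub-cases ($\mu_2$ unramified; or $\mu_2$ and $\chi_v$ both ramified) are dispatched by computing $L_v$ explicitly and checking it has no pole in $\Re(s)\ge\tfrac12$. The remaining sub-case, $\mu_2$ ramified and $\chi_v$ unramified, is where $L_v$ genuinely has poles in the strip; here the paper's main technical contribution (Theorem~\ref{thm1.3}) is an explicit formula for the Whittaker function of a new vector relative to a nonstandard conductor subgroup $K_c$, which permits an exact evaluation of $Z^*(W_c,\tilde f_s^c)$ (Proposition~\ref{prop: special case}) and direct verification that $Z^*/L_v$ has no zeros in $\Re(s)\ge\tfrac12$.

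A smaller point: the condition you need at bad finite places is that $Z^*(W_v,f_{s,v})/L(3s-1,\pi_v,\Ad\times\chi_v)$ has no \emph{zeros} in the region (so that a pole of $L_f$ would be forced to appear as a pole of the holomorphic global integral), not merely that this ratio is holomorphic. Your fractional-ideal phrasing (``containing $L$'', ``$L$ divides $Z^*$'') has the divisibility running the wrong way.
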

 
 As mentioned above, to prove Theorem \ref{thm: main for split case}, we need to analyze Ginzburg's local zeta integral at places $v\in S-S_\infty$, and compare the local zeta integral with the corresponding local L-function. More precisely, we need to show that for each place $v\in S-S_\infty$, there exists a ``test" Whittaker function $W_v$ of $\pi_v$ and a ``test" section $f_{s,v}\in I(s,\chi_v)$ such that the local zeta integral $Z^*(W_v,f_{s,v})$ ``detects" all poles of $L(s,\pi_v,\Ad\times \chi_v)$ on the region $\Re(s)\ge 1/2$, i.e., the quotient $Z^*(W_v,f_{s,v})/L(s,\pi_v,\Ad\times \chi_v)$ has no zeros on the region $\Re(s)\ge \frac{1}{2}$. The main obstacle here is that, at this time, we can not
 rule out the existence of 
 a place  $v\in S-S_\infty$  
 such that $\chi_v$ is unramified
 $\pi_v$ is both ramified and non-tempered.  In this case, $\pi_v$ is of the form $\Ind_{B_3}^{\GL_3(F_v)}(||^{\alpha}\otimes \mu_2\otimes ||^{-\alpha})$, where $B_3$ is the upper triangular Borel subgroup of $\GL_3(F_v)$, $\alpha$ is a real number with $0<\alpha<1/2 $, and $\mu_2$ is a ramified character of $F_v^\times$. It turns out that the required ``test" Whittaker function comes from a certain new form of $\pi_v$. More precisely, let $\psi_v$ be an unramified additive character, and let $W_c\in \CW(\pi_v,\psi_v)$ be the Whittaker function associated with a new vector of $\pi_v$ with respect to the group 
$$K_c=\begin{pmatrix}\fo & \fo & \fp^{-c} \\ \fp^c & 1+\fp^c & \fo\\ \fp^c & \fp^c &\fo \end{pmatrix}^\times,$$
where $\fo$ is the ring of integers of $F_v$, $\fp$ is the maximal ideal of $\fo$ and $c$ is the conductor of $\pi_v$ in the sense of \cite{JPSS81}. Here a new vector of $\pi_v$ with respect to $K_c$ is a nonzero vector in the space $\pi_v^{K_c}$. It is known that $W_c$ is unique (up to scalar).   The main local ingredient to attack Theorem \ref{thm: main for split case} is the following Whittaker function formula for $W_c$:
\begin{thm}[See Theorem \ref{thm1.3}]\label{thm: whittaker function formula}
We have $W_c(1)\ne 0$ and 
$$W_c(\diag(\varpi^m,1,\varpi^{-m}))=\frac{q^{-2m}}{t_1-t_1^{-1}}(t_1^{2m+1}-t_1^{-(2m+1)})W_c(1),$$
where $\varpi$ is a fixed uniformizer of $F_v$, $q$ is the number of the residue field of $F_v$ and $t_1=|\varpi|^\alpha$.
\end{thm}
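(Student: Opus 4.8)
The target is an explicit formula for the Whittaker function $W_c$ attached to a new vector of the ramified, non-tempered principal series $\pi_v = \Ind_{B_3}^{\GL_3(F_v)}(|\cdot|^\alpha \otimes \mu_2 \otimes |\cdot|^{-\alpha})$ at the torus elements $\diag(\varpi^m, 1, \varpi^{-m})$. I would proceed by realizing $W_c$ inside the Whittaker model $\CW(\pi_v, \psi_v)$ via the standard Jacquet integral applied to a carefully chosen section $f_c$ of the induced representation that spans $\pi_v^{K_c}$. The first step is to pin down $f_c$: using the Iwasawa decomposition $\GL_3(F_v) = B_3(F_v) K_c$ (which must be checked, given the slightly unusual congruence subgroup $K_c$), the condition of $K_c$-invariance together with the $B_3$-equivariance determines $f_c$ up to scalar, and one computes its support on a set of representatives. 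Then $W_c(g) = \int_{N_3(F_v)} f_c(w_0 n g)\,\psi_v(n)^{-1}\,dn$, where $N_3$ is the unipotent radical and $w_0$ the long Weyl element.

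**Key steps.** (1) Establish the Iwasawa-type decomposition relative to $K_c$ and identify coset representatives so that $f_c$ is supported on a manageable piece; (2) reduce the three-dimensional Jacquet integral for $W_c(\diag(\varpi^m,1,\varpi^{-m}))$ to a lower-dimensional integral by exploiting the ramification of $\mu_2$ in the middle slot — the $\mu_2$-ramification should force the integration in the relevant variable to be supported on a single coset, collapsing one or two of the integrations; (3) carry out the remaining geometric-series summation, which will produce the factor $\frac{1}{t_1 - t_1^{-1}}(t_1^{2m+1} - t_1^{-(2m+1)})$ as a truncated geometric sum in $t_1 = q^{-\alpha}$, with the $q^{-2m}$ coming from the modulus character $\delta_{B_3}^{1/2}$ evaluated at $\diag(\varpi^m,1,\varpi^{-m})$; (4) verify $W_c(1) \neq 0$, i.e. that the chosen new vector is genuinely nonzero in the Whittaker model, which follows from nonvanishing of the corresponding local Rankin–Selberg or Jacquet integral (genericity of $\pi_v$ guarantees $W_c \not\equiv 0$, and one must show the value at $1$, not merely somewhere, is nonzero — possibly by a support argument showing $1$ lies in the support of $f_c$ after the unfolding).

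**Main obstacle.** I expect the hard part to be step (1)–(2): the combinatorics of the new-vector theory for $\GL_3$ with respect to the specific Kim–Yamauchi/JPSS-type congruence subgroup $K_c$, and in particular verifying that the new vector has the expected one-dimensionality and that its section $f_c$ has the narrow support needed for the Jacquet integral to collapse. The ramified character $\mu_2$ sitting in the \emph{middle} of the inducing data (rather than at an end) is what makes the conductor equal to $c$ and what should kill the "unwanted" contributions; making this precise requires a careful bookkeeping of how $K_c$ interacts with the Bruhat cells. Once the integral is reduced to one genuine summation variable the rest is a routine geometric-series manipulation. An alternative route that avoids some of this is to use the known Whittaker-function formula for newforms on $\GL_3$ due to Miyauchi (building on JPSS), specialized to this principal series — this would shortcut step (2) but still requires checking compatibility of normalizations and extracting the torus values at $\diag(\varpi^m,1,\varpi^{-m})$ specifically; I would likely present the direct Jacquet-integral computation as the primary argument and cite Miyauchi as a consistency check.
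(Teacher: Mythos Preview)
Your high-level strategy---realize $W_c$ as the Jacquet integral applied to an explicit $K_c$-fixed section in the induced model, then compute directly---is exactly what the paper does. However, two of your anticipated steps will not go as you expect.

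First, the Iwasawa-type decomposition $\GL_3(F_v)=B_3(F_v)K_c$ that you propose to check is \emph{false}: $K_c$ is a proper congruence subgroup, and $B_3K_c$ is a proper open subset of $\GL_3(F_v)$. The paper's section $f$ is defined to have support \emph{contained in} $B_3K_c$ and to vanish outside; one checks directly that this function is $K_c$-invariant and hence (by the one-dimensionality of $I(\mu)^{K_c}$) is the newform. Much of the work then goes into determining, for each unipotent $u$, whether $wu\cdot\diag(a,1,b)$ lies in $B_3K_c$, using elementary matrix criteria (Lemma~\ref{lem1.1} in the paper) together with the Bruhat-type identity expressing $w\cdot\fu(x,y,z)$ as (torus)$\cdot$(upper unipotent)$\cdot$(lower unipotent).

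Second, the hope that the ramification of $\mu_2$ collapses the integral cleanly is too optimistic. The paper's proof is explicitly described as ``brute force'': the integral over $U_3$ is broken into eight pieces $I_1^k,\dots,I_8^k$ according to the valuations of $x,y,z$, each is computed separately (using the character-sum identities of Lemma~\ref{lem1.4}, where the ramification of $\mu_2$ does enter), and only after summing and simplifying does the closed form emerge. The value $W_f(1)=(t_1t_3^{-1}q^{-1})^c(1-t_1t_3^{-1}q^{-1})$ drops out of this computation and is visibly nonzero.

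Finally, your proposed shortcut via Miyauchi's newform Whittaker formula does not work here: the paper remarks that Miyauchi's formula is attached to the conjugate subgroup $K_c'$ rather than $K_c$, and despite the conjugacy there is no apparent relation between the two formulae. So the direct computation is not merely a consistency check---it is the argument.
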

In fact, Theorem \ref{thm1.3} is slightly general than the above Theorem \ref{thm: whittaker function formula}. After Theorem \ref{thm: whittaker function formula}, a careful choice of test section $f_{s,v}\in I(s,\chi_v)$ (See \S\ref{computation of the local zeta integral in a special case} for the details) will give the desired property, i.e., $Z^*(W_c,f_{s,v})/L(s,\pi_v,\Ad\times \chi_v)$ has no zeros in the region $\Re(s)\ge 1/2$. 
 \begin{rmk}
{\rm After \cite{H18}, Buttcane and Zhou \cite[Theorem 2.4]{BuZh} were able to show that the adjoint $L$-function of a Maass form for $\SL_3(\Z)$ must be entire. Their argument relies on a simple comparison between where the Gamma factor can have poles and where the Riemann zeta function can have zeros. Our result gives an extension of theirs to Maass forms for congruence subgroups of $\SL_3(\Z).$}
\end{rmk}
\begin{rmk}
{\rm Theorem \ref{thm: main for split case} shows that $L_f(s,\pi,\Ad)$ has no poles in the region $\Re(s)\ge \frac{1}{2}$. To prove Conjecture \ref{conj: adjoint GL3 is entire} in the case $n=3$, one still needs to analyze Ginzburg's local zeta integral $Z(W_v,f_{s,v})$ for any archimedean place $v$. In this direction, recently F. Tian is able to show the meromorphic continuation, local functional equation at archimedean places in \cite{Ti18}. Moreover, the local gamma factors for principal series representations at archimedean places is explicitly computed in \cite{Tianthesis, Ti18c}.  }
\end{rmk}

The second part of this paper is 
devoted to the nonsplit case, i.e., when $H=\SU_{2,1}$ is a quasisplit unitary group attached to a quadratic extension $E$ of $F$, and $\pi$ is a globally generic, irreducible cuspidal automorphic representation of $\RU_{2,1}(\BA).$   
In this case, we have not performed a careful analysis on local zeta integrals at ramified primes, because there is still a lot to 
say about the simpler question of when the partial $L$ function attached to all finite unramified places will have a pole. 
As mentioned, the adaptation of Ginzburg's integral representation to apply in nonsplit case was carried out in \cite{H12}.
The adaptation of Ginzburg-Jiang's method of ruling out poles was carried out in \cite{H18}, with interesting results. 

The key to Ginzburg and Jiang's result
is a ``first term identity'' relating residues of Eisenstein series attached to 
different parabolics. Using such an 
identity, they showed that if the global 
integral from \cite{G} has a double pole, then a global integral involving an Eisenstein series on the 
other maximal parabolic subgroup of $G_2$ must be nonzero. They then showed that this global integral 
unfolds to a period integral which vanishes on every cuspidal representation of $\GL_3(\BA).$
When this argument is adapted to the nonsplit case the resulting period, which is on $\SU_{1,1},$ does {\it not} vanish on every cuspidal 
representation of $\RU_{2,1}(\BA).$ Rather, it fails to vanish, by earlier work of Gelbart, Rogawski, and Soudry \cite{GeRoSo2}, 
precisely on the image of an endoscopic lift constructed in \cite{RogawskiBook}.
By carefully combining this information 
with information about the relationship 
between Rogawski's liftings and 
the stable base change lift 
of Kim and Krishnamurthy \cite{KimKrishOdd,KimKrishEven}, 
as well as results about the image of 
that lift obtained by Ginzburg, Rallis and Soudry by the method of functorial descent
\cite{GRSBook}, we obtain the following 
main result.

\begin{thm}[see Theorem \ref{thm: main for nonsplit case}]
Take $\pi$ a globally generic irreducible cuspidal automorphic representation of
the quasisplit unitary group $\RU_{2,1}(\A)$ attached to a quadratic extension 
$E$ of $F.$ Let $\chi_{E/F}$ be the quadratic character attached to $E/F$ by class field theory. Then $L^S(s, \pi, \Ad)$ is holomorphic and nonvanishing at $s=1,$
while $L^S(s, \pi, \Ad\times \chi_{E/F})$ can have at most a double pole. More precisely, we have three sets of equivalent conditions. 
\begin{enumerate}
\item 
The following are equivalent: 
\begin{enumerate}
\item $L^S(s, \pi, \Ad\times \chi_{E/F})$ is holomorphic and nonvanishing at $s=1$
\item the stable base change lift, $\sbc(\pi),$ of $\pi$ is cuspidal 
\item $\pi$ is not endoscopic
\item $\pi$ has a nonzero $\SU_{1,1}$ period
\end{enumerate}
\item 
The following are equivalent: 
\begin{enumerate}
\item $L^S(s, \pi, \Ad\times \chi_{E/F})$ has a simple pole at $s=1$
\item $\sbc(\pi)$ is the isobaric sum of a character and a cuspidal representation of $\GL_2(\A_E)$
\item $\pi$ is an endoscopic lift from $\RU_{1,1}(\A) \times \RU_1(\A),$ but not from  $\RU_{1}(\A) \times \RU_{1}(\A) \times \RU_{1}(\A).$
\end{enumerate}
\item 
The following are equivalent: 
\begin{enumerate}
\item $L^S(s, \pi, \Ad\times \chi_{E/F})$ has a double pole at $s=1$
\item $\sbc(\pi)$ is the isobaric sum of three characters of $\A_E^\times,$
\item $\pi$ is an endoscopic lift from $\RU_{1}(\A) \times \RU_{1}(\A) \times \RU_{1}(\A).$
\end{enumerate}
\end{enumerate}
\end{thm}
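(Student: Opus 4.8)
The plan is to run the Ginzburg--Jiang argument in the nonsplit setting and then translate its output --- a statement about periods and endoscopy --- into a statement about the stable base change lift.

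\emph{Step 1: the integral representation and reduction to $s=1$.} Recall from \cite{H12} the global zeta integral
\[
Z(\varphi,f_s)=\int_{\SU_{2,1}(F)\bs\SU_{2,1}(\A)}\varphi(g)\,E(g,f_s)\,dg ,
\]
where $\varphi$ lies in the given globally generic cuspidal $\pi$ and $E(g,f_s)$ is an Eisenstein series on $G_2$ attached to a maximal parabolic $P$ and a degenerate section $f_s$ built from $\chi_{E/F}$ (resp.\ from the trivial character, for the untwisted statement). As in \cite{G,H12}, unfolding makes $Z(\varphi,f_s)$ Eulerian with unramified local factor $L(s,\pi_v,\Ad\times\chi_{E/F,v})$ (resp.\ $L(s,\pi_v,\Ad)$), up to the same normalizing $L$-factors as in the split case; since we only want the partial $L$-function over unramified finite places, $L^S(s,\pi,\Ad\times\chi_{E/F})$ is a multiple of $Z(\varphi,f_s)$ by factors that are holomorphic and nonzero at $s=1$, together with local integrals at $v\in S$ which we need only know to be finite and, for suitable data, nonzero (no full local analysis is attempted in this case). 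By Langlands' theory of Eisenstein series on $G_2$, $E(g,f_s)$ has finitely many poles in $\Re(s)\ge 1/2$ and its normalized constant term along $P$ is governed by ratios of $\zeta_F$, $L(\,\cdot\,,\chi_{E/F})$ and their shifts, so in that region its only possible pole is one of order at most two at $s=1$. Combined with the functional equation and the factorization relating $L^S(s,\pi,\Ad)$, $L^S(s,\pi,\Ad\times\chi_{E/F})$ and the two Asai $L$-functions of $\sbc(\pi)$, this reduces the whole statement to an analysis at $s=1$ and, in particular, yields the holomorphy and non-vanishing of $L^S(s,\pi,\Ad)$ there: it equals, up to $\zeta^S_F(s)$ or $L^S(s,\chi_{E/F})$, the part of an Asai $L$-function of the conjugate-self-dual $\sbc(\pi)$ complementary to the one carrying the pole, and the pole structure of Asai $L$-functions of conjugate-self-dual isobaric representations (deduced from the cuspidal case by multiplicativity) makes that part holomorphic and nonzero at $s=1$.

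\emph{Step 2: the first-term identity, periods, and endoscopy.} Following \cite{GJ} as adapted in \cite{H18}, I would use a first-term identity relating the leading term of $E(g,f_s)$ at $s=1$ to a residual Eisenstein series $E'(g)$ on the other maximal parabolic $P'$ of $G_2$. This produces a dichotomy: $Z(\varphi,f_s)$, hence $L^S(s,\pi,\Ad\times\chi_{E/F})$ up to the nonvanishing local factors, has a \emph{double} pole at $s=1$ if and only if $\int_{\SU_{2,1}(F)\bs\SU_{2,1}(\A)}\varphi(g)E'(g)\,dg\neq 0$; and unfolding this $P'$-integral as in \cite{H18} rewrites it as a period of $\varphi$ over a conjugate of $\SU_{1,1}(\A)\subset\SU_{2,1}(\A)$. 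This is exactly where the nonsplit case diverges from the split one, in which the corresponding period on $\SL_3$ vanishes identically on cuspforms, forcing at most a simple pole. By Gelbart--Rogawski--Soudry \cite{GeRoSo2}, a globally generic cuspidal $\pi$ on $\RU_{2,1}(\A)$ has a nonzero $\SU_{1,1}$-period precisely when $\pi$ lies in the image of Rogawski's endoscopic lifting \cite{RogawskiBook} from $\RU_{1,1}\times\RU_1$, and within that image the finer invariant --- whether $\pi$ also comes from $\RU_1\times\RU_1\times\RU_1$ --- is read off from the parameter. This already gives, in each of the three blocks of conditions, the equivalence of the pole statement with the endoscopy statement.

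\emph{Step 3: translation to $\sbc(\pi)$.} Here I would combine (i) the compatibility of Rogawski's liftings with base change together with the identification of Rogawski's stable base change with the Kim--Krishnamurthy stable base change \cite{KimKrishOdd,KimKrishEven} --- under which $\pi$ endoscopic from $\RU_{1,1}\times\RU_1$ but not from $\RU_1\times\RU_1\times\RU_1$ forces $\sbc(\pi)=\mu\boxplus\tau$ with $\tau$ cuspidal on $\GL_2(\A_E)$, while $\pi$ endoscopic from $\RU_1\times\RU_1\times\RU_1$ forces $\sbc(\pi)=\mu_1\boxplus\mu_2\boxplus\mu_3$ --- and (ii) the description, via the functorial descent of Ginzburg--Rallis--Soudry \cite{GRSBook}, of which conjugate-self-dual isobaric representations of $\GL_3(\A_E)$ occur as $\sbc(\pi)$ for generic cuspidal $\pi$, in particular that $\sbc(\pi)$ is cuspidal exactly when $\pi$ is not endoscopic. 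Matching the three cuspidal-support types of $\sbc(\pi)$ to pole orders $0,1,2$ --- pinned down by the Step 1 residue computation case by case ($\sbc(\pi)$ cuspidal $\Rightarrow$ at most a simple pole of the relevant Asai factor, cancelled by the normalizing factor; three characters $\Rightarrow$ double pole; the mixed case $\Rightarrow$ simple pole) --- then completes all three chains of equivalences.

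\emph{Main obstacle.} The analytic input (Steps 1--2) is largely a repackaging of \cite{G,GJ,H18}, modulo the gap-fixing already carried out in the split case, so I expect the real difficulty to be the bookkeeping in Step 3: reconciling Rogawski's $L$-packet and endoscopic classification for $\RU_{2,1}$, Kim--Krishnamurthy's stable base change, and the GRS descent image, so that the various notions of ``endoscopic,'' the parity conventions distinguishing $\Asai^+$ from $\Asai^-$ (equivalently $\Ad$ from $\Ad\times\chi_{E/F}$), and the precise meaning of ``$\pi$ has an $\SU_{1,1}$-period'' extracted from the unfolded $P'$-integral all literally coincide with the ones in \cite{GeRoSo2,RogawskiBook}. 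One must also verify that a lift from $\RU_1\times\RU_1\times\RU_1$ is genuinely a degeneration of a lift from $\RU_{1,1}\times\RU_1$, so that the three pole-order cases are exhaustive and mutually exclusive, and that genericity of $\pi$ is preserved at each step where it is used.
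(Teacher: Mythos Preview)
Your Step~3 contains the computational heart of the matter --- reading off pole orders from the Asai $L$-function of $\sbc(\pi)$, case by case on the isobaric type --- and this is essentially what the paper does. But Step~2 overclaims, and is internally inconsistent with Step~3.

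The first-term identity argument, combined with \cite{GeRoSo2}, yields at most: \emph{any} pole of $L^S(s,\pi,\Ad')$ at $s=1$ forces $\pi$ to be $\SU_{1,1}$-distinguished, hence endoscopic. (This is exactly what is quoted from \cite{H18} at the opening of the section on the nonsplit case.) It does not, by itself, separate the simple-pole and double-pole cases among endoscopic $\pi$. Your Step~2 asserts ``double pole $\Longleftrightarrow$ period nonzero $\Longleftrightarrow$ endoscopic from $\RU_{1,1}\times\RU_1$,'' and then claims this already gives the equivalence of the pole and endoscopy statements \emph{in each of the three blocks}. But the theorem says the primitive $\RU_{1,1}\times\RU_1$ lift has only a \emph{simple} pole, so your chain would force that case to have a double pole, contradicting both the theorem and your own Step~3. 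Put differently: since every endoscopic $\pi$ is $\SU_{1,1}$-distinguished (this is the content of condition~(1)(d), which incidentally should read ``$\pi$ is \emph{not} $\SU_{1,1}$-distinguished'' --- the period vanishes in the non-endoscopic case), the biconditional ``double pole $\Longleftrightarrow$ period nonzero'' cannot hold for $L^S$.

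The paper therefore does not attempt to extract pole orders from the period argument. Instead it writes down Rogawski's $L$-homomorphisms $\xi^{(2,1)}_{w_0,\mu}$ and $\xi^{(1,1,1)}$ explicitly, decomposes the restricted representations $\Ad\circ\xi$ and $\Ad'\circ\xi$ of the endoscopic $L$-groups, and thereby factors $L^S(s,\pi,\Ad')$ for each endoscopic type as a product of $\zeta_F^S$, $L^S(\cdot,\chi_{E/F})$, an adjoint $L$-function of $\RU_{1,1}$, and a twisted $L$-function of an automorphically induced representation. From these factorizations (together with the identity $L^S(s,\sbc(\pi),\Asai^+)=\zeta^S(s)\,L^S(s,\pi,\Ad')$ and the nonvanishing of the remaining factors on $\Re(s)=1$) one reads off directly that the order of pole of $L^S(s,\pi,\Ad')$ at $s=1$ is one less than the number of isobaric summands of $\sbc(\pi)$, and that $L^S(s,\pi,\Ad)$ is holomorphic and nonvanishing there. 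The matching of endoscopic type with isobaric type of $\sbc(\pi)$ is handled separately, by checking explicitly that $\sbc\circ\Rog^{(2,1)}$ and $\sbc\circ\Rog^{(1,1,1)}$ produce the expected isobaric sums, and invoking the descent characterization of the image of $\sbc$ from \cite{GRSBook} for the converse. So what you flag as the ``main obstacle'' --- the bookkeeping of $L$-homomorphisms, sign conventions for $\Asai^\pm$ versus $\Ad,\Ad'$, and the compatibility of Rogawski, Kim--Krishnamurthy, and descent --- is not a technical afterthought but the entire proof; the first-term identity and the period are only used as background from \cite{H18} to supply the implication ``pole $\Rightarrow$ distinguished'' and hence the link to condition~(1)(d).
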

The twist by $\chi_{E/F}$ is of special importance for at least two reasons. First, 
in the nonsplit case the local zeta integral $Z^*(W_v, f_{v,s})$ is equal 
to $L(s, \pi_v, \Ad \times \chi_{E/F,v}\chi_v)$ 
as opposed to $L(s, \pi_v, \Ad\times \chi_v).$
(See \cite{H12}.) Second, the analogue of \eqref{Ad = RS/zeta} in the nonsplit case relates
$L(s, \pi , \Ad \times \chi)$ to the Asai $L$-function $L(s, \sbc(\pi), \Asai)$ of the stable base change
lift $\sbc(\pi)$ (\cite{KimKrishOdd,KimKrishEven}) of $\pi.$
Recall that $\sbc(\pi)$ is an automorphic representation of $\GL_3(\A_E).$
The twist by $\chi_{E/F}$ is of particular importance because 
for a cuspidal representation $\Pi$ of $\GL_n(\A_E),$ the 
$L(s, \Pi, \Asai) L(s, \Pi, \Asai \times \chi_{E/F})$ is equal to the 
Rankin-Selberg convolution $L$-function of $\Pi$ with the automorphic representation 
of $\GL_3(\A_E)$ obtained by composing $\Pi$ with the nontrivial element of $\Gal(E/F).$
(cf \cite[p.317]{GRSBook}).  
We also obtain results which characterize
 when an adjoint $L$ function twisted 
 by a character other than $\chi_{E/F}$ 
 can have a pole. See theorems \ref{thm:other poles; type3}, \ref{thm:other poles: type 2+1} and proposition \ref{prop: other poles, type 1+1+1}.
 
 Because it relies on an embedding 
of $\SL_3$ into $G_2$,  
Ginzburg's integral representation does not 
generalize to $\GL_n$ for general $n.$ However, 
it was shown in \cite{BumpGinzburg} that a certain global integral 
in the split exceptional group $F_4$ represents 
 the adjoint $L$-function of $\GL_4,$  and 
some evidence was presented in \cite{GH3} that a certain global integral in the exceptional group $E_8$ represents the adjoint $L$-function for $\GL_5.$ In 
both cases, it appears possible to adapt the construction to apply to 
quasisplit unitary groups as well. It is expected similar results could be done for adjoint representations of $\GL_4$ and $\GL_5$ by studying these integral representations.
 
 \subsection{Acknowledgements}
The first named author was supported by NSA grants H98230-15-1-0234 and  H98230-16-1-0125. The second named author was supported by NSFC grant 11801577 and a fellowship from the Pacific Institute for the Mathematical Sciences (PIMS). We thank Jack Buttcane, Jim Cogdell, Clifton Cunningham, Sol Friedberg, Paul Garrett, Dorian Goldfeld, Dihua Jiang, Muthukrishnan Krishnamurthy, Xiaoqing Li, Baiying Liu, David Loeffler, Stephen Miller, Michitaka Miyauchi, Ralf Schmidt, Richard Taylor, and Fan Zhou for stimulating questions, helpful suggestions, and useful discussions. We thank Fangyang Tian for sending us his unpublished preprints \cite{Tianthesis,Ti18c} and for his comments on our Lemma \ref{lemnonvanishing} of a previous draft. We are very grateful to the anonymous referees for their careful reading and many valuable suggestions.

\section{Local zeta integral for the adjoint representation of \texorpdfstring{$\GL_3$}{Lg} and \texorpdfstring{$\RU_{2,1}$}{Lg}}

In this section, we review Ginzburg's local zeta integral \cite{G} for the adjoint representation of $\GL_3$ and prove the local functional equation for these local zeta integrals. Ginzburg's construction was extended to the $\RU_{2,1}$ case in \cite{H12}.  Let $F$ be a local field in this section. If $F$ is a non-archimedean local field, let $\fo$ be the ring of integers of $F$ and let $q$ be the number of the residue field of $F$. We also fix a uniformizer $\varpi$ of $F$ when $F$ is non-archimedean.

The local zeta integrals defined by Ginzburg involve the unique split exceptional group of type $G_2$. We denote this group $G_2$ and realize it as a set of $8 \times 8$ matrices as in \cite{H12}. Let $B=TU$ be the upper triangular Borel subgroup of $G_2$ under the realization in \cite{H12} with torus $T$ and maximal unipotent subgroup $U$.

 The group $G_2$ has two simple roots $\alpha,\beta$, where $\alpha$ is the short root and $\beta$ is the long root. Then the set of positive roots of $G_2$ is $\wpair{\alpha,\beta,\alpha+\beta,2\alpha+\beta,3\alpha+\beta,3\alpha+2\beta}$. We denote the set of all roots by $\Phi$ and the set of positive roots by $\Phi^+.$ For a root $\delta$, we denote $U_\delta$ the corresponding root space of $\delta$ and $\bx_\delta: \BG_a\ra U_\delta$ a choice of isomorphism. We let $X_\delta = d\bx_\delta(1).$ (Here, $d\bx_\delta$ is the differential.) We assume that $X_\alpha = E_{12}+E_{34}+E_{35}-E_{46}-E_{56}-E_{78},$ $X_{\beta}=E_{23}-E_{67},$ and that the family $\{ \bx_\delta: \delta \in \Phi\}$ is chosen as in \cite{G}, where $E_{ij}\in \textrm{Mat}_{8\times 8}(F)$ is the matrix such that its $i$-th row and $j$-th column is 1 and zero elsewhere. (See also \cite{Ree}). For a root $\delta$, denote $w_\delta(t)=\bx_{\delta}(t)\bx_{-\delta}(-t^{-1})\bx_\delta(t)$ and $w_\delta=w_\delta(1)$. Let $h_\delta(t)=w_\delta(t)w_\delta^{-1}$. For $a,b\in F^\times$, we denote $h(a,b)=h_\alpha(ab)h_\beta(a^2b)$. Then $T(F)=\wpair{h(a,b): a,b\in F^\times},$ and $U=\prod_{\delta>0}U_\delta.$ 
   
We briefly recall some facts about $G_2$ and the particular realization of it given in \cite{H12}.  For more details see \cite{H12} and references therein. The group $G_2$ can be realized as the fixed points of an order three automorphism of $\on{Spin}_8.$ The embedding into eight dimensional matrices in \cite{H12} is obtained by embedding into $\on{Spin}_8$ and then projecting to $\SO_8.$ This projection is actually injective on $G_2$. Thus a symmetric bilinear form is preserved. For the realization in \cite{H12} it is the form attached to the matrix $J$ with ones on the diagonal running from top right to lower left, and zeros elsewhere. The standard representation of $\SO_8$ does not restrict to an irreducible representation of $G_2.$ It decomposes as a one dimensional space on which $G_2$ acts trivially and a copy of the seven dimensional ``standard'' representation of $G_2.$ In \cite{H12}, the invariant one dimensional space is spanned by $v_0:={}^t\!\bbm 0&0&0&1&-1&0&0&0\ebm.$ The seven dimensional ``standard'' representation of $G_2$ supports an invariant quadratic form $Q.$ (In \cite{H12}, it's obtained by restricting the form induced by $J$ to the orthogonal complement of $v_0$.) The vectors satisfying $Q(v)=c$ form a single $G_2(F)$-orbit for each $c \in F^\times.$ The stabilizer of such a vector is isomorphic to $\SL_3,$ either over $F,$ or over a quadratic extension depending on $c.$ For example, the stabilizer $H_\rho$ of $v_\rho:={}^t\! \bbm 0&0&1&0&0&\rho&0&0\ebm$ is isomorphic to $\SL_3$ over the smallest extension of $F$ in which $\rho$ is a square. Indeed, if $\rho = \tau^2$ then 
 $h_\beta(\frac 1{2\tau})\bx_\alpha(\frac{1}{2\tau})\bx_{-\alpha}(-\tau).v_\rho = {}^t\!\bbm
 0&0&0&-\tau &-\tau&0&0&0
 \ebm.$
(In fact, $\bx_\alpha(\frac{1}{2\tau})\bx_{-\alpha}(-\tau).v_\rho = {}^t\!\bbm 0&0&0&-\tau &-\tau&0&0&0\ebm,$ and then acting by $h_\beta(\frac1{2\tau})$ has no effect. The reason for including $h_\beta(\frac1{2\tau})$ will be clear in a moment.) The stabilizer of this vector in $\f g_2$ is the image of the embedding $i:\f{sl}_3\to \f g_2$ given by 
 $$
 i\bpm a&b&c\\d&e&f\\ g&h&-a-e \epm = 
 \bpm 
a+e &&&&&-f&-c&\\ 
&a&b&&&&&c\\ 
&d&e&&&&&f\\ 
&&&0&&&&\\ 
&&&&0&&&\\ 
-h&&&&&-e&-b&\\ 
-g&&&&&-d&-a&\\ 
&g&h&&&&&-a-e\\ 
 \epm.
 $$
 A general element of $\f h_\rho$ is given 
 % in equation (2) of \cite{H12}.
 by 
 $$\bpm
 T_1&a&-\rho e&d&d&e&f&0\\
\rho a&T_1&-\rho d&\rho e&\rho e&d&0&-f\\
h&l&0&a&a&0&-d&-e\\
-\rho l&-h&\rho a&0&0&-a&-\rho e&-d\\
-\rho l&-h&\rho a&0&0&-a&-\rho e&-d\\
-\rho h&-\rho l&0&-\rho a&-\rho a&0&\rho d&\rho e\\
k&0&\rho l&h&h&-l&-T_1&-a\\
0&-k&\rho h&\rho l&\rho l&-h&-\rho a&-T_1\epm.$$
 Conjugating by 
  $h_\beta(\frac1{2\tau})\bx_\alpha(\frac{1}{2\tau})\bx_{-\alpha}(-\tau)$
  yields the image under the injection $i$ of the matrix 
  \begin{equation}
\bpm
-a \tau  + T_{1} & e \tau  - d & -\frac{f}{2 \, \tau } \\
2 \, l \rho - 2 \, h \tau  & 2 \, a \tau  & -e \tau  - d \\
-2 \, k \tau  & 2 \, l \rho + 2 \, h \tau  & -a \tau  - T_{1}
\epm,
  \label{cm}\end{equation}(this is where  $h_\beta(\frac1{2\tau})$ is needed)
  which is a general element of $\SL_3(F)$ if $\tau \in F.$ 
  If $\tau \notin F,$ then \eqref{cm} is a general element 
  of 
  $$\left\{X \in \f{sl}_3(F(\tau)): X\bspm &&-1\\&1&\\-1&& \espm 
  + \bspm &&-1\\&1&\\-1&& \espm  {}^t\ol{X} = 0\right \},$$
  where $\overline{\phantom{F}}$ denotes the action of the nontrivial element of $\Gal(F(\tau)/F).$ This is the Lie algebra of a quasisplit special unitary group attached to the matrix $\bspm &&-1\\&1&\\-1&& \espm.$ In \cite{H12}, the group $\rm U_{2,1}$ is defined (relative to a choice of quadratic extension) using the matrix $\bspm &&1\\&1&\\1&& \espm,$ i.e., 
  $$\RU_{2,1}=\wpair{g\in \GL_3(F(\tau)): {}^t\!\bar g\bspm &&1\\&1&\\1&& \espm g=\bspm &&1\\&1&\\1&& \espm. }$$
  For consistency with \cite{H12} we compose conjugation  by $\diag(1,1,-1)$ in $\GL_3$ with $i$ followed conjugation by  $(h_\beta(\frac1{2\tau})\bx_\alpha(\frac{1}{2\tau})\bx_{-\alpha}(-\tau))^{-1}$ in $G_2$ to obtain an injection ${\rm SU}_{2,1} \into G_2$ in the case $\tau \notin F.$ In the case $\tau \in F$ we obtain an injection $\SL_3 \into G_2,$ which is slightly different from the one used in \cite{G}.
 
 Let $P=MU^\alpha$ be the parabolic subgroup of $G_2$ such that $U_\alpha$ is contained in the Levi $M\cong \GL_2$ of $P$. Here $U^\alpha$ is the unipotent radical of $P$, which is the product of  the root subgroups of $\beta,\alpha+\beta,2\alpha+\beta,3\alpha+\beta,3\alpha+2\beta$. Then $B_\rho:=P \cap H_\rho$ is a Borel subgroup of $H_\rho.$ We denote its unipotent radical $U_\rho$ and its maximal torus $T_\rho.$

\subsection{Induced representations}
Let  $N_{2,\rho}= U_\rho \cap w_\beta P w_\beta^{-1}.$ Then the image of $N_{2,\rho}(F)$ in $\GL_3(F(\tau))$ is 
 $$ \wpair{\begin{pmatrix}
1&r\tau&t\tau +\frac{r^2\rho}2\\
&1&r\tau \\&&1\end{pmatrix},r,t\in F}.$$
In the case $\tau \in F$, this simplifies to 
$ \wpair{\begin{pmatrix}
1&r&t\\
&1&r\\&&1\end{pmatrix},r,t\in F}.$ This is a little different from \cite{G}, where 
$N_2$ is $ \wpair{\begin{pmatrix}
1&r&t\\&1&-r\\&&1\end{pmatrix},r,t\in F}.$ The reason for the difference is the extra conjugation by $\diag(1,1,-1).$

 The Levi subgroup $M$ is generated by elements $\bx_\alpha(r), \bx_{-\alpha}(r), h(a,b)$. Note that $h_\alpha(r)=h(1/r,r^2).$ We consider the isomorphism $M\ra \GL_2(F)$ defined by 
$$\bx_\alpha(r)\mapsto \begin{pmatrix}1& r\\ &1 \end{pmatrix},$$
$$h(a,b)\mapsto \begin{pmatrix}ab &\\ & a \end{pmatrix}.$$
For $m\in M$, we define $\det(m)$ using the isomorphism $M\cong \GL_2$. 

 Let $\delta_P$ be the modulus character of $P$. One can check that $\delta_P(m)=|\det(m)|^3$ for $m\in M$. Let $\chi$ be a character of $F^\times$, we consider the normalized induced representation \label{defn of I(s,chi)}
$$I(s,\chi)=\Ind_{P}^{G_2}(\chi_{s-1/2}),$$
where $\chi_s$ is the character of $M$ defined by $\chi(\det(m))\delta_P^s(m)$.  Note that 
$$\chi_s(h(a,b))=\chi(a^2b)|a^2b|^{3s}.$$

Let $\tilde w=w_\beta w_\alpha w_\beta w_\alpha w_\beta$. Then $\tilde w$ represents the unique Weyl element such that $\tilde w(\alpha)>0$ but $\tilde w(U^\alpha)$ is in the opposite of $U$. Consider the standard intertwining operator 
$$M_{\tilde w}: I(s,\chi)\ra I(1-s,\chi^{-1})$$
which is defined by 
$$M_{\tilde w}(f_s)(g)=\int_{U^\alpha}f_s( \tilde wug)du.$$
By the general theory of intertwining operators, $M_{\tilde w}$ is absolutely convergent for $\Re(s)\gg0$ and can be meromorphically continued to all $s\in \BC$.
\subsection{An exact sequence for induced representations of \texorpdfstring{$G_2$}{Lg}}
%In the interest of brevity, for the remainder of this section we shall abuse notation and denote the $F$-points of various algebraic groups by the same symbol. Thus $P=P(F)$ and so on.

For each $\rho \in F^\times,$ we have the double coset decomposition $G_2=Pw_\beta H_\rho\cup P H_\rho$, see \cite[Lemma 3]{H12}. The subgroup $T_0:= \wpair{h(a,1): a\in \GL_1}$ of $T$ is contained in $H_\rho$ for all $\rho$ and is the maximal $F$-split subtorus of $T_\rho$ when  $\rho$ is not a square.  We also have 
$$H_\rho\cap w_\beta P w_\beta^{-1}=N_{2,\rho}\cdot T_0,$$ for all $\rho.$ See \cite[pp. 198-199]{H12}.  Moreover we have the relation 
\begin{equation}\label{eq2.1}
w_\beta h(a,1)=h(1,a)w_\beta.
\end{equation}
On the other hand, recall that $H_\rho\cap P$ is the Borel subgroup $B_\rho$ of $H_\rho$. Mackey's theory gives us an exact sequence of $H_\rho$-modules:
\begin{equation}\label{eq2.2}0\ra \ind_{N_{2,\rho}\cdot T_0}^{H_\rho}(\chi'_s)\ra I(s,\chi)\ra \textrm{n-}\Ind_{B_\rho}^{H_\rho}(\chi_{s})\ra 0,  \end{equation}
where $\ind$ means compact induction, $\textrm{n-}\Ind$ means non-normalized induction, $\chi'_s$ is the character on $N_{2,\rho}\cdot T_0$ defined by 
$$\chi_s'(nh(a,1))=\chi_s(h(1,a))=\chi(a)|a|^{3s},\qquad (n \in N_{2, \rho}, \ a \in \GL_1)$$
and $\chi_s$ is viewed as character on $B_\rho$ by restriction, since $B_\rho \subset P.$  Here the embedding $$ \ind_{N_{2,\rho} \cdot T_0}^{H_\rho}(\chi'_s) \incl I(s,\chi)$$ is defined as follows. For $f_s\in  \ind_{N_{2,\rho} \cdot T_0}^{H_\rho}(\chi'_s)$, then the corresponding $\tilde f_s\in  I(s,\chi)$ is defined by  \label{def of tilde f s}
$$\tilde f_s( mu w_\beta g)=\chi_s(m) f_s(g),$$
where $m\in M, u\in U^\alpha, g\in H_\rho$, and 
$$\tilde f_s(h)=0, \textrm{ if } h\notin Pw_\beta  H_\rho.$$
By \eqref{eq2.1}, $\tilde f_s$ is well-defined.
\subsection{The local zeta integrals}
For $\rho$ in $F^\times$ fix $\tau$ with $\tau^2=\rho,$ and let $\wt H_\rho$ be $\GL_3$ when $\rho$ is a square 
and the quasi-split unitary group ${\rm U}_{2,1}$ otherwise.
Let $\pi$ be an irreducible generic representation of $\wt H_\rho(F)$ and $\chi$ be a quasi-character of $F^\times$. Let $\psi$ be a nontrivial additive character of $F$. 

The maximal unipotent subgroup $U_\rho$ is given by 
$$\left\{\begin{pmatrix} 1&x+y\tau&\frac{(x^2-y^2\rho)}2+w\tau\\&1&-x+y\tau\\ &&1\end{pmatrix}x,y,z\in F\right\}.$$
If $\tau\in F$, this is simply a parametrization of the standard maximal unipotent of $\SL_3$. Let $\psi_\rho:U_\rho(F) \to \C^\times$ be the character given by 
\begin{equation}
\label{psirho}
\psi_\rho\begin{pmatrix} 1&x+y\tau&\frac{(x^2-y^2\rho)}2+w\tau\\&1&-x+y\tau\\ &&1\end{pmatrix}
= \psi(x).
\end{equation}
Note that $\psi_\rho|_{N_{2,\rho}}=1$. Given $W\in \CW(\pi,\psi),f_s\in I(s,\chi)$, the local zeta integral of \cite{G},\cite{H12} is 
$$Z(W,f_s)=\int_{N_{2,\rho}\setminus H_\rho(F)}W(g)f_s(w_\beta g)dg.$$
Observe that if $\tilde f_s\in  \Ind_P^{G_2}(\chi_s)$ is given in terms of  $f_s\in  \ind_{N_{2,\rho} \cdot T_0}^{H_\rho}(\chi_s)$ as in section 
 \ref{def of tilde f s} then 
 $$Z(W,\tilde f_s)=\int_{N_{2,\rho}\setminus H_\rho(F)}W(g)f_s( g)dg.$$

Theorem 5.1 of \cite{H18} states that  for each $s_0$, there exists a choice of data $W,f_s$ (depending on $s_0$) such that $Z(W,f_s)$ is holomorphic and nonvanishing at $s_0$. However, there is a gap in 
the proof of this theorem in the 
archimedean case, because it is 
never shown that $f_{s_0} \mapsto Z(W, f_{s_0})$ 
is a continuous function of $f_{s_0} \in I(s_0, \chi),$ when 
$s_0$ is outside the domain of 
convergence. In the split case, 
it is shown in \cite{Tianthesis, Ti18}
that $Z$ is, in fact, continuous in 
both of its arguments. The technique, which 
was pioneered in \cite{Soudry-Archimedean}, 
should extend to the non-split case as well. 
In fact, the non-split case is easier, because the 
rank of the maximal split torus is only one. 
Here, we content ourselves with sketching how to close the gap in \cite{H18}. 
\begin{lem}
For any fixed $s_0 \in \C$ and $W$ in the Whittaker model of 
some irreducible unitary representation of $\SU_{2,1}(\R),$
the function $f_{s_0} \mapsto Z(W, f_{s_0})$ is a continuous 
function $I(s_0, \chi) \to \C.$
\end{lem}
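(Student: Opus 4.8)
The plan is to dispose of the easy range by a soft argument and then handle a general $s_0$ by the asymptotic-expansion method of \cite{Soudry-Archimedean}, exploiting that the split rank of $\SU_{2,1}(\R)$ is one. For $\Re(s)$ in a suitable right half-plane the integral defining $Z(W,f_s)$ converges absolutely, and there continuity of $f_{s_0}\mapsto Z(W,f_{s_0})$ is immediate: the topology of $I(s_0,\chi)$ is that of uniform convergence, with all derivatives, of the restriction of sections to a maximal compact subgroup of $G_2(\R)$, so that $f_{s_0}^{(n)}\to f_{s_0}$ forces $g\mapsto f_{s_0}^{(n)}(w_\beta g)$ to converge uniformly on compacta, while the standard gauge estimate for $W$ supplies a majorant over $N_{2,\rho}\backslash H_\rho(\R)$, integrable when $\Re(s_0)\gg 0$ and independent of $n$; dominated convergence then applies. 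The work is to remove the restriction on $\Re(s_0)$.

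First I would reorganize $Z(W,f_s)$ using the Iwasawa decomposition $H_\rho(\R)=U_\rho(\R)T_\rho(\R)K_H$, with $K_H$ maximal compact and $T_\rho=T_0\cdot T_{\mathrm{an}}$, where $T_0=\{h(a,1)\}$ is the maximal split torus and $T_{\mathrm{an}}$ is anisotropic hence compact. Choosing a one-parameter family $u_x$ of coset representatives for the codimension-one subgroup $N_{2,\rho}$ of $U_\rho$, and using the $\psi_\rho$-equivariance of $W$ together with $\psi_\rho(u_x)=\psi(x)$, the relation $w_\beta h(a,1)=h(1,a)w_\beta$, and the equivariance $f_s(h(1,a)g)=\chi(a)|a|^{3s}f_s(w_\beta g)$, one rewrites, for $\Re(s)\gg 0$,
\[
Z(W,f_s)=\int_{\R^\times}\delta_0(a)\,\chi(a)\,|a|^{3s}\Bigl(\int_{K_H}\!\int_{T_{\mathrm{an}}}W(h(a,1)\gamma k)\,\Lambda_a(f_s)(\gamma,k)\,d\gamma\,dk\Bigr)d^\times a ,
\]
where $\delta_0$ is the polynomial modulus factor of the quotient measure and $\Lambda_a(f_s)$ is the partial Fourier transform $\int_{\R}\psi(x)\,f_s(w_\beta u_x\,\cdot\,)\,dx$ attached to the simple reflection $w_\beta$. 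Now substitute the Casselman--Wallach asymptotic expansion of $W$ along the split torus $T_0$: it involves a \emph{finite} set of exponents, governed, since $\pi$ is of moderate growth, by the archimedean Langlands data of $\pi$, and integrating the leading terms against $\delta_0(a)\chi(a)|a|^{3s}\Lambda_a(f_s)$ produces elementary integrals of the shape $\int_0^1 t^{\,\nu+3s}(\log t)^m\,d^\times t$, which are rational in $s$. One obtains, for every $N$ and valid initially for $\Re(s)\gg 0$,
\[
Z(W,f_s)=\sum_{j\in\mathcal J}r_j(s)\,\ell_j(f_s)+\Xi_N(s,f_s),
\]
where $\mathcal J$ is finite, the $r_j$ are rational with poles confined to a discrete set independent of $W$, the $\ell_j$ are continuous linear functionals on $I(s_0,\chi)$, and $\Xi_N$ --- collecting the order-$N$ remainder near $|a|=0$ together with the contribution from $|a|$ bounded away from $0$, which is controlled by the oscillation and decay of the partial Fourier integral --- converges absolutely, is holomorphic in $s$, and is continuous in $f_{s_0}$, for $\Re(s)>\sigma_N$, with $\sigma_N\to-\infty$ as $N\to\infty$.

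Granting this, the lemma follows: fix $s_0$ outside the discrete exceptional set and pick $N$ with $\sigma_N<\Re(s_0)$; the last display then gives the meromorphic continuation of $Z(W,\cdot)$ near $s_0$, where $Z(W,f_s)$ is a finite sum of (scalars holomorphic at $s_0$) times $\ell_j(\cdot)$, plus the continuous functional $\Xi_N(s_0,\cdot)$, hence a continuous functional of $f_{s_0}\in I(s_0,\chi)$. (At the exceptional values of $s_0$, the same formula shows that each Laurent coefficient of $Z(W,\cdot)$ at $s_0$ depends continuously on $f_{s_0}$.)

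The hard part is the second paragraph --- producing the expansion with its two essential uniformities: uniformity over the compact transverse directions $(\gamma,k)$, and, more delicately, continuity in the section $f_{s_0}\in I(s_0,\chi)$ of the coefficient functionals $\ell_j$ and of the remainder estimate, which also entails making sense, by meromorphic continuation in $s$, of the partial transform $\Lambda_a(f_s)$ outside its range of convergence. The asymptotics of Whittaker functions along a torus are classical (the Casselman--Wallach theory, or the Jacquet--Shalika gauge estimates), but propagating them through the Iwasawa decomposition and the partial Fourier integral in the unipotent variable is exactly the step left unjustified in \cite{H18}; it was carried out in the split case in \cite{Tianthesis,Ti18}, following \cite{Soudry-Archimedean}, and should be lighter here since $\SU_{2,1}(\R)$ has split rank one, so that only a single one-variable integral, over $T_0$, has to be analyzed.
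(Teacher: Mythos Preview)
Your overall strategy --- Iwasawa decomposition, reduce to a one-variable integral over the split torus, asymptotic expansion, then meromorphic continuation with continuous coefficient functionals --- is the right shape, and matches the paper's. But there is a genuine gap at the central step: you expand the wrong function. You apply the asymptotic expansion to $W$, writing its leading terms near $|a|=0$ as $\sum c_j\,t^{\nu_j}(\log t)^{m_j}$, and then assert that integrating these against $\delta_0(a)\chi(a)|a|^{3s}\Lambda_a(f_s)$ yields ``elementary integrals of the shape $\int_0^1 t^{\nu+3s}(\log t)^m\,d^\times t$.'' It does not: the factor $\Lambda_a(f_s)=Jf_s(a)$ is still present in the integrand, and it has its own nontrivial behavior as $a\to 0$. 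Indeed $Jf_s$ is the Jacquet integral for the embedded $\SL_2$, and for $\Re(s)$ outside the convergence range it does not stay bounded near $0$; its small-$t$ behavior involves powers $t^{3s_0}$ and $t^{1-3s_0}$. So the integrals $\int_0^1 t^{\nu_j+3s}(\log t)^{m_j}\Lambda_t(f_s)\,d^\times t$ that you actually get are neither elementary nor rational in $s$, and their continuity in $f_{s_0}$ is precisely the issue you are trying to establish. Correspondingly, your functionals $\ell_j$ have no clear definition: if they were the $c_j$ from the expansion of $W$, they would not depend on $f_s$ at all.

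The paper's fix is to reverse the roles. It first uses the exact Jacquet--Shalika decomposition $W(t)=\sum_\xi \varphi_\xi(t)\xi(t)$ with $\varphi_\xi$ Schwartz (so there is no remainder to estimate on the $W$ side), and then applies the Wallach asymptotic expansion (\cite[15.2]{Wallach-RRG2}) to $Jf_s$ near $t=0$. The crucial input is that the coefficients $a_k$ in $Jf_{s_0}(t)\sim\sum a_k(f_{s_0})t^{z_k}$ are \emph{continuous} linear functionals on $I(s_0,\chi)$, and the remainder is bounded by a continuous seminorm $A_{N+1}(f_{s_0})$. These $a_k$ are your missing $\ell_j$; once you have them, the remaining integrals are of the form $\int_0^\infty \varphi(t)\,t^{u+z_k}(\log t)^n\,dt$, i.e., genuine Tate-type integrals against a Schwartz function, which do continue meromorphically. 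Your sketch can be repaired by inserting exactly this step: expand $Jf_s$, not $W$.
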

\begin{proof}
Factoring the Haar measure on $\SU_{2,1}(\R)$ using the 
Iwasawa decomposition, we 
 write the 
local zeta integral as 
$$
\int_K \int_0^\infty\int_{-\infty}^\infty
f_s(w_\beta \bx_\beta(r)) W(d(t)) \psi(tr) t^{3s-3}
\, dr\, dt\, dk,
$$
where $d(t)= \bspm t&\\&1&\\&&t^{-1}\espm\in \SU_{2,1}(\R).$
Arguing as in \cite[Lemma 1, p.197]{Soudry-Archimedean}, or  \cite[Lemma 4.1]{Ti18}, we show that continuity of $Z$ follows from continuity of the 
inner integral 
$$
\int_0^\infty W(t) t^{3s-3} J f_s(t)\, dt, 
\qquad J f_s(t) : = \int_{-\infty}^\infty f_s(w_\beta \bx_\beta(r)) \psi(tr)\, dr.
$$
Arguing as in \cite{JS-ExteriorSquare}, we may write 
$$W(t) = \sum_{\xi \in X} \varphi_\xi(t) \xi(t)$$
where $X$ is a finite set of finite functions on the maximal split torus, 
and, for each $\xi \in X,$ $\varphi_\xi$ is a Schwartz function on $\R.$
Recall that a finite function on the multiplicative group of positive reals is 
of the form $t\mapsto t^u (\log t)^n$ for some complex number $u$ 
and non-negative integer $n.$ 
Hence it suffices to prove continuity of the mapping 
\begin{equation}
    \label{mapping}
f \mapsto \int_0^{\infty} Jf_s(t) \varphi(t) t^u (\log t)^n \, dt,\end{equation}
for any complex number $u,$ nonnegative integer $n,$ and Schwartz 
function $\varphi.$
Since $Jf_s$ is just the Jacquet integral of an embedded $\SL_2$
we have an asymptotic expansion as in \cite[15.2]{Wallach-RRG2}. 
This can be formulated as follows. 
Let $(z_n)_{n=1}^\infty$ be the sequence of complex numbers
obtained by numbering the elements of 
$\{ 3s_0+2k: k = 0,1,2, \dots\} \cup \{ 1-3s_0+2k: k = 0,1,2 \dots\}$ in increasing order
% TODO: make sure this is exactly right.. not s_0/2 + 2k, 1-s_0/2 + 2k or some such 
of real part. Then there is a sequence 
$(a_k)_{k=0}^\infty$ of continuous linear functionals
$I(s_0, \chi)\to \C$ and a sequence $(A_k)_{k=1}^\infty$ of continuous 
functions $I(s_0, \chi) \to (0, \infty)$ such that for any non-negative integer
$N$, 
$$
\left|
Jf_{s_0}(t) - \sum_{k=0}^N a_k(f_{s_0}) t^{z_k} 
\right|\le A_{N+1}(f_{s_0}) t^{z_{N+1}},~ \forall t < 1.
$$
Let $Z(\varphi, u, n)$ be defined by 
$$Z(\varphi, u, n)=\int_0^\infty \varphi(t) t^u (\log t)^n \, dt.$$
 for $u > 0$ and by meromorphic continuation elsewhere. 
 (Notice that $Z(\varphi u, 0)$ is a standard Tate zeta factor, while $Z(\varphi, u, n)$ is its $n$th derivative.)
Then 
$$\int_0^{\infty} Jf_s(t) \varphi(t) t^u (\log t)^n \, dt
- \sum_{k=0}^n a_k(f_{s_0})
Z( \varphi, u+z_k, n) 
+E(f_{s_0}),$$
where
$$
|E(f_{s_0})| \le A_{N+1}(f_{s_0})
\int_0^\infty |\varphi(t)| t^{\Re(u+z_{N+1})} ( \log t)^n \, dt,
$$
provided we choose $N$ sufficiently large to ensure that this last integral is 
convergent. Since each of the functions 
$a_k, k=0, \dots, N$ and $A_{N+1}$ tends to zero with $f_{s_0},$ it is now clear that 
\eqref{mapping} is continuous.
\end{proof}

If $F$ is non-archimedean, we can prove a stronger form of the non-vanishing result.

\begin{lem}\label{lemnonvanishing}
Let $F$ be non-archimedean. The local zeta integral $Z(W,f_s)$ is absolutely convergent for $\Re(s)\gg 0$ and can be meromorphically continued to a rational function of $q^s$. Moreover, there exist choices of data $W,f_s$ such that $Z(W,f_s)$ is a nonzero constant.
\end{lem}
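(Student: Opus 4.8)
The plan is to treat the convergence and meromorphic continuation exactly as in \cite{G,H12,H18} and to concentrate on the last assertion. For convergence, apply the Iwasawa decomposition $H_\rho(F)=U_\rho(F)T_\rho(F)K_\rho$ to write $Z(W,f_s)$ as an integral over $K_\rho$ (compact, harmless), over the one-dimensional quotient $N_{2,\rho}(F)\backslash U_\rho(F)$ (on which $W$ transforms by $\psi_\rho$ and against which $f_s(w_\beta\,\cdot\,)$ integrates to a compactly supported function of the remaining variables), and over $T_\rho(F)$; there the standard fact that $W$ is supported in a translate of the positive cone and grows at most moderately inside it, together with the geometrically decaying factor $\chi_s$ supplied by $f_s$ when $\Re(s)$ is large, gives absolute convergence. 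Rationality in $q^{s}$ then follows from the familiar Rankin--Selberg argument: for fixed smooth data the reduced integrand is, on $T_\rho$, a finite sum of terms of the form (Schwartz function)$\,\cdot\,$(finite function)$\,\cdot\,q^{(\text{affine in }s)\cdot\val(\,\cdot\,)}$, and summing each over the lattice $T_\rho(F)/T_\rho(\fo)$ produces a rational function of $q^{s}$.

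The device for the final statement is the Bruhat cell section $\tilde f_s\in I(s,\chi)$ attached to $f_s\in\ind_{N_{2,\rho}T_0}^{H_\rho}(\chi'_s)$, for which $Z(W,\tilde f_s)=\int_{N_{2,\rho}(F)\backslash H_\rho(F)}W(g)f_s(g)\,dg$. Fix a compact open subgroup $K_0\le H_\rho(F)$ small enough that $\chi'_s$ is trivial on $N_{2,\rho}T_0\cap K_0$, take $f_s$ supported on $N_{2,\rho}T_0K_0$ with $f_s(n t_0 k_0)=\chi'_s(t_0)$, and take $W$ right-invariant under $K_0$. The integral then collapses, up to an $s$-independent positive scalar $c$ (the modulus character of $N_{2,\rho}T_0$ only twists $\chi'_s$ by an $s$-independent unramified character, which is harmless), to
\[
Z(W,\tilde f_s)
=c\int_{T_0(F)}W(t_0)\,\chi'_s(t_0)\,d^\times t_0
=c\sum_{n\in\Z}\chi(\varpi)^{n}q^{-3ns}\int_{\fo^\times}W(h(\varpi^{n}u,1))\,\chi(u)\,d^\times u ,
\]
using $\chi'_s(h(a,1))=\chi(a)|a|^{3s}$ and the identification $T_0(F)\cong F^\times$, $a\mapsto h(a,1)$, with maximal compact $\{h(u,1):u\in\fo^\times\}$. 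Since the functions $q^{-3ns}$ of $s$ are linearly independent, this expression is independent of $s$ the moment $W$ vanishes on $\{h(a,1):\val(a)\neq0\}$, in which case it equals the constant $c\int_{\fo^\times}W(h(u,1))\chi(u)\,d^\times u$.

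It therefore suffices to exhibit $W\in\CW(\pi,\psi)$, invariant under some compact open subgroup, whose restriction to $T_0$ is supported on $T_0(\fo)$ and with $\int_{\fo^\times}W(h(u,1))\chi(u)\,d^\times u\neq0$. Starting from any $W^{0}$ with $W^{0}(1)\neq0$, one first replaces $W^{0}$ by a suitable finite linear combination of its right translates by the elements $h(\varpi^{j},1)\in T_0$, chosen to annihilate the finitely many characters of $T_0(F)/T_0(\fo)$ occurring in the asymptotic expansion of $W^{0}$ along $T_0$; the resulting $W^{1}$ has $W^{1}|_{T_0}$ of compact support. That one such $W^{1}$ is nonzero on $T_0$ --- equivalently, that the Kirillov model of $\pi$ is ``large'' along the ray $T_0$ --- is classical when $H_\rho=\SL_3$ and $\pi$ comes from a generic representation of $\GL_3(F)$: there $h(a,1)$ differs by a scalar matrix from an element of the mirabolic $P_3$, so twisting by the central character moves the question to the Kirillov model of $\GL_3$, which realizes arbitrary compactly supported functions of $a\in F^\times$ along the relevant one-parameter subgroup (see \cite{JPSS81}). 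For $H_\rho=\RU_{2,1}$ one invokes the analogous property of generic representations of $\RU_{2,1}(F)$, coming from their structure relative to the (unique, Heisenberg) minimal parabolic $P'=M'U'$, whose Levi $M'$ is a torus containing $T_0$ (cf.\ \cite{GeRoSo2}). Choosing such a $W$, and if necessary twisting $W|_{T_0(\fo)}$ by a character of $\fo^\times$ to prevent cancellation against $\chi$, makes $Z(W,\tilde f_s)$ a nonzero constant.

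The step I expect to be the main obstacle is this last one --- verifying that $\CW(\pi,\psi)$ contains a Whittaker function whose restriction to the split torus $T_0$ is compactly supported and non-degenerate. For $\SL_3$ it is the classical theory of the Kirillov model of $\GL_3$; for the quasi-split unitary group $\RU_{2,1}$ one must run (or locate in the literature) the analogous Kirillov-type analysis. Everything else --- the collapse of the zeta integral on the Bruhat cell and the bookkeeping with $\chi'_s$ and the modulus --- is routine.
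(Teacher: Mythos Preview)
Your approach and the paper's share the first move---pass through the exact sequence to a section $\tilde f_s$ coming from $f_s\in\ind_{N_{2,\rho}T_0}^{H_\rho}(\chi'_s)$, so that $Z(W,\tilde f_s)=\int_{N_{2,\rho}\backslash H_\rho}W(g)f_s(g)\,dg$---but then diverge in how the test section is chosen. By supporting $f_s$ on the thin set $N_{2,\rho}T_0K_0$, you collapse the integral to $\int_{T_0}W(t_0)\chi'_s(t_0)\,d^\times t_0$ (up to modulus), and are then obliged to produce $W$ whose restriction to the one-parameter subgroup $T_0$ is compactly supported with prescribed behaviour near $1$. For $\GL_3$ this is accessible through the Kirillov model, as you indicate; for $\RU_{2,1}$ you correctly flag it as the main obstacle and leave it open. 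So as written, your proof has a genuine gap in the nonsplit case.

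The paper sidesteps this entirely by choosing a richer test section. It supports $f_s^m$ on the full Borel double coset $B_\rho K_3^m$, with
\[
f_s^m(n\,h(a,1)\,n(x)\,h(1,b)\,k)=\chi'_s(h(a,1))\,\phi_1(b)\,\phi_2(x),
\]
where $n(x)$ parametrizes the one-dimensional root direction in $U_\rho$ complementary to $N_{2,\rho}$. The point is that $W$ transforms under $n(x)$ by the generic character: $W(h(a,1)n(x)h(1,b))=\psi(ax)W(h(a,b))$. Hence the $x$-integral is a Fourier transform, and one gets
\[
Z(W,\tilde f_s^m)=c_1\int_{(F^\times)^2}W(h(a,b))\,\check\phi_2(a)\,\phi_1(b)\,\chi_s(h(1,a))\,\frac{d^*a\,d^*b}{|a^3b^2|}.
\]
Choosing $\phi_1=\check\phi_2=\mathbf 1_{1+\fp^m}$ localizes both torus variables simultaneously, yielding a nonzero constant multiple of $W(1)$. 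The only requirement on $W$ is $W(1)\ne 0$, which is immediate for any generic $\pi$. No Kirillov-type input is needed, and the argument is uniform in $\rho$.

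The moral: rather than shrinking the support of $f_s$ to kill the extra integration variables (which pushes the difficulty onto $W$), the paper keeps those variables and lets the Whittaker transformation law turn the unipotent integral into a Fourier transform that does the localization for free. That Fourier-transform trick is the missing idea in your approach.
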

The meromorphic continuation of the local zeta integral $Z(W,f_s)$ at the archimedean places in the split case is proved in \cite{Ti18}.
\begin{proof}
The first statement follows from the Bruhat decomposition and the asymptotic behavior of $W$ on the torus element. We next consider the ``moreover" part. To simplify the notation, we only deal with the split case, i.e., when $H_\rho\cong \SL_3$. Let $K_3^m=(1+\textrm{Mat}_{3\times 3}(\fp^m))\cap \SL_3(F)$ be the standard level $m$ congruence subgroup of $\SL_3$. Recall that $B_3$ is the upper triangular Borel subgroup of $\GL_3(F)$.

In this case $T_\rho \cong (F^\times)^2.$ For each $a \in F^\times,$ the group $T_\rho$ contains the element $h(a,1),$ which is identified
with $\diag(a,1,a^{-1})$ in $\SL_3(F).$ Note that $h(a,b)\in T$ is identified with $\diag(a,b,a^{-1}b^{-1})\in \SL_3(F)$ %We fix an identification of $\SL_3$ with $H_\rho$ and write $h_\rho(a,b)$ for the element corresponding to $\diag(a,b,a^{-1}b^{-1}).$ \label{def of h_rho torus}

Consider the following function on $\SL_3(F)$:
$$f_{s}^m(g)=\left\{\begin{array}{lll}0,& \textrm{ if } g\notin (B_3\cap \SL_3(F))K_3^m\\ \chi_s'(nh(a,1))\phi_1(b)\phi_2(x),  & \textrm{ if } g=nh(a,1)n(x)h(1,b) k, n\in N_2, k\in K_3^m, \end{array}\right.$$
where $$n(x)=\begin{pmatrix}1& x& 0\\ &1&0\\ &&1 \end{pmatrix}\in \SL_3(F), \phi_1\in \CS(F^\times),\phi_2\in \CS(F).$$
To make $f_s^m$ well-defined, we need to require that $\phi_1,\chi_s'$ are constant on $K_3^m\cap T_0$, and $\phi_2$ is invariant under the translation of $\varpi^m\fo_F$. We have $f_s^m\in \ind_{N_{2,\rho}\cdot T_0}^{H_\rho}\chi_s'$. We now compute $Z(W,\tilde f_s^m).$
We assume $m$ is large enough such that $W(gk)=W(g)$ for $k\in K_3^m$. Then factoring the Haar measure on $H_\rho$ 
using the Iwasawa decomposition yields
\begin{align*}
Z(W, \tilde f_s^m)&=c_1\int_{(F^*)^2\times F} W(h(a,1) n(x) h(1,b))f_s^m( h(a,1)n(x)h(1,b) )\, dx\,\frac{ d^*a\, d^*b}{|a^3b^2|}\\
&=c_1\int_{(F^*)^2\times F} W(n(ax) h(a,b)) \chi_s(h(1,a))\phi_1(b)\phi_2(x)\, dx\,\frac{ d^*a\, d^*b}{|a^3b^2|}\\
&=c_1\int_{(F^*)^2} W(h(a,b)) \check \phi_2(a)\phi_1(b)\chi_s(h(1,a))\, \,\frac{ d^*a\, d^*b}{|a^3b^2|},
\end{align*}
where $c_1=\Vol(B_\rho(\fo)) K_3^m$ and  $\check \phi_2(a)=\int_F \psi(ax)\phi_2(x)dx$. 

Assume $m$ is sufficiently large that $a\mapsto \chi_s(h(1,a))$ is trivial on $1+\fp^m,$ and  choose $\phi_1, \phi_2$ such that $\phi_1=\check \phi_2$ is the characteristic function of $1+\fp^m$, we get 
$Z(W, \tilde f_s^m) =c_1\vol(1+\fp^m)^2W(1).$ Clearly, this is constant and $W$ may be chosen so that it is nonzero.
This concludes the proof.
\end{proof}

\subsubsection{Dependence on $\psi$}
In this section we discuss the dependence of the local Ginzburg zeta integral on the choice of additive character $\psi.$ Let $\wh F$ be the Pontriagin dual of $F$ . Then $\widehat F$ is isomorphic 
to $F,$ but not canonically:
indeed if 
$\psi$ is any fixed nontrivial element of $\wh F,$ then every other element of $\wh F$ is of the form 
$\psi^a(x):=\psi(ax)$ for some $a \in F.$ 
The formula \eqref{psirho} gives an injection 
$\widehat F \into \widehat{U_\rho(F)}.$ 
If $\pi$ is an irreducible $\psi_\rho$-generic representation of $\wt H_\rho(F),$
write $\CW(\pi,\psi_{\rho})$ for the Whittaker model 
of $\pi.$
\begin{lem}
Fix $a, \rho \in F^\times.$ Then 
$$(\psi^a)_\rho(u) = \psi_\rho(h(a,1)uh(a,1)^{-1}), \qquad \forall u \in U_\rho(F).$$
\end{lem}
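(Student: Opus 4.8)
The plan is to reduce the identity to a single matrix conjugation in $\SL_3$ (resp.\ $\RU_{2,1}$) and then read both sides off from \eqref{psirho}. First I would fix the parametrization of $U_\rho(F)$ by triples $(x,y,w)\in F^3$ coming from the explicit description of $U_\rho$ above, writing $u(x,y,w)$ for the corresponding element; then \eqref{psirho} gives $\psi_\rho(u(x,y,w))=\psi(x)$, so by the defining formula $\psi^a(x)=\psi(ax)$ we get $(\psi^a)_\rho(u(x,y,w))=\psi(ax)$. Thus the lemma is equivalent to the conjugation identity
$$ h(a,1)\,u(x,y,w)\,h(a,1)^{-1} \;=\; u\big(ax,\ ay,\ a^2 w\big), \qquad \text{for all } x,y,w\in F. $$

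The main step is this identity. Recall from the discussion preceding Lemma~\ref{lemnonvanishing} (and the parallel normalization in \cite{H12} in the nonsplit case) that under the embedding $\wt H_\rho\hookrightarrow G_2$ the torus element $h(a,1)$ is identified with $\diag(a,1,a^{-1})$: this lies in $\SL_3(F)$ when $\rho$ is a square, and in $\RU_{2,1}\subset\GL_3(E)$ otherwise --- legitimately, since $a\in F^\times$ forces $\bar a=a$, so $\diag(a,1,a^{-1})$ satisfies the relevant hermitian relation. Since the embedding is a group homomorphism, the conjugation $h(a,1)\,u\,h(a,1)^{-1}$ in $G_2(F)$ is just ordinary conjugation of the unipotent matrix $u$ by $\diag(a,1,a^{-1})$, which multiplies the $(1,2)$- and $(2,3)$-entries by $a$ and the $(1,3)$-entry by $a^2$. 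Applied to $u(x,y,w)$ this sends $x+y\tau\mapsto a(x+y\tau)$, $-x+y\tau\mapsto a(-x+y\tau)$, and $\tfrac{x^2-y^2\rho}{2}+w\tau\mapsto a^2\big(\tfrac{x^2-y^2\rho}{2}+w\tau\big)=\tfrac{(ax)^2-(ay)^2\rho}{2}+(a^2w)\tau$, which are precisely the coordinates of $u(ax,ay,a^2w)$. Feeding this back in gives $\psi_\rho(h(a,1)u(x,y,w)h(a,1)^{-1})=\psi_\rho(u(ax,ay,a^2w))=\psi(ax)=(\psi^a)_\rho(u(x,y,w))$, as required.

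The only point that takes a moment of care --- and it is minor --- is the identification $h(a,1)\leftrightarrow\diag(a,1,a^{-1})$ in the nonsplit case, where the embedding of \cite{H12} is built with extra conjugations by $\diag(1,1,-1)$ and by $(h_\beta(\tfrac{1}{2\tau})\bx_\alpha(\tfrac{1}{2\tau})\bx_{-\alpha}(-\tau))^{-1}$. Here one simply notes that $\diag(1,1,-1)$ commutes with $\diag(a,1,a^{-1})$, so the first twist leaves the torus element unchanged, and that $T_0\subseteq H_\rho$ maps onto the displayed diagonal inside $\SL_3(E)$; the rest is the routine verification above. One could alternatively avoid coordinates altogether by observing that both functions in question are characters of $U_\rho(F)$ depending only on $x$ --- each is trivial on $N_{2,\rho}$ and factors through the homomorphism $u(x,y,w)\mapsto x$ --- so that it suffices to compare them on the elements $u(x,0,0)$; but the direct matrix computation is shorter, and I would use that.
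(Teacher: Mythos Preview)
Your proposal is correct and is precisely the direct computation the paper alludes to; the paper's own proof reads in its entirety ``Direct computation.'' You have simply written out that computation, and your identification of $h(a,1)$ with $\diag(a,1,a^{-1})$ and the resulting scaling $(x,y,w)\mapsto(ax,ay,a^2w)$ is exactly what is needed.
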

\begin{proof}
Direct computation.
\end{proof}
\begin{cor}
If $\pi$ is $\psi_\rho$ generic then it is $(\psi^a)_\rho$-generic for every $a,$ and 
left translation by $h(a,1)$ is an isomorphism
$\CW(\pi,\psi_{\rho}) \to \CW(\pi,(\psi^a)_{\rho}).$
\end{cor}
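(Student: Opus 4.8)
The plan is to obtain the corollary directly from the lemma just proved; once one records that $h(a,1)$ normalizes $U_\rho(F)$, the statement is essentially bookkeeping. For that normalization: $h(a,1)$ lies in $T_0$, which by the discussion above is contained in the maximal torus $T_\rho$ of the Borel subgroup $B_\rho = P\cap H_\rho$ whose unipotent radical is $U_\rho$; since a maximal torus normalizes the unipotent radical of any Borel subgroup containing it, $h(a,1)\,U_\rho(F)\,h(a,1)^{-1} = U_\rho(F)$.

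Next I would make the map precise. By ``left translation by $h(a,1)$'' I mean the operator sending a function $W$ on $H_\rho(F)$ to $W^{a}\colon g\mapsto W(h(a,1)g)$. Given $W\in\CW(\pi,\psi_\rho)$, say $W(g)=\lambda(\pi(g)v)$ for a vector $v$ and a nonzero $\psi_\rho$-Whittaker functional $\lambda$, I would check, for $u\in U_\rho(F)$,
$$W^{a}(ug)=W\big(h(a,1)\,u\,h(a,1)^{-1}\cdot h(a,1)g\big)=\psi_\rho\big(h(a,1)uh(a,1)^{-1}\big)\,W^{a}(g)=(\psi^a)_\rho(u)\,W^{a}(g),$$
where the second equality uses the $\psi_\rho$-equivariance of $W$ (legitimate since the conjugate again lies in $U_\rho(F)$) and the third is the lemma. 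Since left translation preserves smooth vectors, $W^{a}\in\CW(\pi,(\psi^a)_\rho)$; equivalently, $W^{a}(g)=\lambda(\pi(h(a,1))\pi(g)v)$ is attached to the $(\psi^a)_\rho$-Whittaker functional $\lambda\circ\pi(h(a,1))$. Choosing $v$ with $\lambda(v)\ne 0$ produces a nonzero such functional, so $\pi$ is $(\psi^a)_\rho$-generic.

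It then remains to see that $W\mapsto W^{a}$ is bijective onto $\CW(\pi,(\psi^a)_\rho)$. Linearity is clear. Because $h_\alpha$ and $h_\beta$ are cocharacters into the abelian torus $T$, one has $h(a,1)h(b,1)=h(ab,1)$, whence $h(a,1)^{-1}=h(a^{-1},1)$ and left translation by $h(a,1)$ followed by left translation by $h(a^{-1},1)$ is the identity on functions. Running the computation of the previous paragraph with $\psi^a$ in place of $\psi$ and the scalar $a^{-1}$ in place of $a$ — noting $(\psi^a)^{a^{-1}}=\psi$ — shows that left translation by $h(a^{-1},1)$ maps $\CW(\pi,(\psi^a)_\rho)$ back into $\CW(\pi,\psi_\rho)$. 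Hence these two translations are mutually inverse isomorphisms, which gives the asserted conclusion.

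I do not expect a genuine obstacle here: the only substantive input is the preceding lemma, together with the elementary fact that $h(a,1)$ normalizes $U_\rho(F)$, and everything else is manipulation of the twist $\psi\mapsto\psi^a$. If one prefers to avoid Whittaker functionals, the displayed computation already shows directly that $W\mapsto W^{a}$ carries functions with $\psi_\rho$-equivariance to functions with $(\psi^a)_\rho$-equivariance, and smoothness is preserved automatically.
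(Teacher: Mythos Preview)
Your proposal is correct and is exactly the argument the paper has in mind: the corollary is stated without proof in the paper, as an immediate consequence of the preceding lemma, and you have simply written out the details of that deduction. The only inputs are the lemma's identity $(\psi^a)_\rho(u)=\psi_\rho(h(a,1)uh(a,1)^{-1})$ and the fact that $h(a,1)\in T_\rho$ normalizes $U_\rho(F)$, both of which you use correctly.
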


\begin{prop}\label{prop:dependence on psi}
If $f_s \in I(s, \chi), \ W \in \CW(\pi, \psi_\rho),$ and $a \in F^\times,$ then 
there exists $W' \in \CW(\pi, (\psi^a)_\rho)$ such that 
$Z(W', f_s) = \chi_s(h(1,a))^{-1}Z(W, f_s).$
\end{prop}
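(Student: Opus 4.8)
The plan is to take $W'$ to be a suitably normalized left translate of $W$ by the torus element $h(a,1) \in T_0 \subset H_\rho(F)$ and then transfer that translation across to the section $f_s$ by means of the relation \eqref{eq2.1}. Explicitly, I would set $W'(g) := |a|^{-3}\,W\!\big(h(a,1)g\big)$. The first task is to check that $W'$ lies in $\CW(\pi,(\psi^a)_\rho)$: for $u \in U_\rho(F)$ one has
\[
W'(ug) = |a|^{-3}\,W\!\big(h(a,1)\,u\,h(a,1)^{-1}\cdot h(a,1)g\big) = |a|^{-3}\,\psi_\rho\!\big(h(a,1)uh(a,1)^{-1}\big)\,W\!\big(h(a,1)g\big),
\]
and the lemma preceding the corollary identifies $\psi_\rho(h(a,1)uh(a,1)^{-1})$ with $(\psi^a)_\rho(u)$; hence $W'$ transforms under $(\psi^a)_\rho$, and being obtained from $W$ by translation and scaling it genuinely lies in $\CW(\pi,(\psi^a)_\rho)$.

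Next I would invoke the transformation law for the invariant measure on $N_{2,\rho}\backslash H_\rho(F)$ under left translation. Since $N_{2,\rho}\cdot T_0$ is a group, $h(a,1)$ normalizes $N_{2,\rho}$; conjugation by $h(a,1)$ sends the parameters $(r,t)$ of $N_{2,\rho}$ to $(ar,a^2t)$, so it scales the Haar measure of $N_{2,\rho}$ by $|a|^3$, and since $H_\rho$ is unimodular this gives $\int_{N_{2,\rho}\backslash H_\rho(F)}\Phi(h(a,1)g)\,dg = |a|^3\int_{N_{2,\rho}\backslash H_\rho(F)}\Phi(g)\,dg$ for every left $N_{2,\rho}(F)$-invariant $\Phi$. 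I apply this to $\Phi(g) := W(g)\,f_s\!\big(w_\beta h(a,1)^{-1}g\big)$, which is left $N_{2,\rho}(F)$-invariant: on the $W$-side because $\psi_\rho|_{N_{2,\rho}}=1$, and on the $f_s$-side because $w_\beta N_{2,\rho}w_\beta^{-1}$ is a unipotent subgroup of $P$ (using $w_\beta^2\in T$) on which $\chi_s$ is trivial, so $f_s(w_\beta ng)=f_s(w_\beta g)$, together with the fact that $h(a,1)$ normalizes $N_{2,\rho}$. Noting that $\Phi(h(a,1)g)=W(h(a,1)g)\,f_s(w_\beta g)$, the factor $|a|^3$ cancels the $|a|^{-3}$ I inserted, and I obtain
\[
Z(W',f_s) = \int_{N_{2,\rho}\backslash H_\rho(F)} W(g)\,f_s\!\big(w_\beta h(a,1)^{-1}g\big)\,dg.
\]

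The final step is \eqref{eq2.1}: $w_\beta h(a,1)^{-1}=w_\beta h(a^{-1},1)=h(1,a^{-1})w_\beta$, with $h(1,a^{-1})\in M\subset P$, so using $f_s(mg)=\chi_s(m)f_s(g)$ for $m\in P$ and $\chi_s(h(1,a^{-1}))=\chi_s(h(1,a))^{-1}$ I get $f_s(w_\beta h(a,1)^{-1}g)=\chi_s(h(1,a))^{-1}f_s(w_\beta g)$; pulling this scalar out of the integral yields $Z(W',f_s)=\chi_s(h(1,a))^{-1}Z(W,f_s)$, as claimed. All the manipulations are first carried out for $\Re(s)$ large, where the integral converges absolutely (Lemma \ref{lemnonvanishing} in the nonarchimedean case, and the analogous convergence statement, cf.\ \cite{Ti18}, in the archimedean one), and then extended to all $s$ by meromorphic continuation.

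The one point demanding care is the bookkeeping of modulus characters: one must not lose the factor $|a|^3$ produced by passing to $g\mapsto h(a,1)g$ on the quotient $N_{2,\rho}\backslash H_\rho(F)$, and it is exactly this factor that dictates the normalizing constant $|a|^{-3}$ in the definition of $W'$. Everything else — the equivariance check, the relation \eqref{eq2.1}, and the triviality of $\chi_s$ on unipotent elements — is routine, and the argument is uniform in $\rho$, covering both the split case $H_\rho\cong\SL_3$ and the unitary case $H_\rho\cong\SU_{2,1}$, since it invokes only the structural facts recalled above.
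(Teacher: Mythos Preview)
Your proof is correct and follows essentially the same approach as the paper: define $W'$ as a left translate of $W$ by $h(a,1)$, and use the identity $w_\beta h(a,1)=h(1,a)w_\beta$ to move the torus element across to the section. You are in fact more careful than the paper about the modulus factor $|a|^{3}$ coming from left translation on $N_{2,\rho}\backslash H_\rho(F)$; the paper simply sets $W'(g)=W(h(a,1)g)$ and invokes ``a change of variable'' without tracking this Jacobian, whereas your normalization $|a|^{-3}$ makes the stated equality hold exactly.
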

\begin{proof}
Indeed $W'(g) := W(h(a,1)g)$ is an element of $\CW(\pi, (\psi^a)_\rho).$
A change of variable in the integral defining $Z(W, f_s),$
together with the identity $w_\beta h(a,1) = h(1,a) w_\beta$
 shows that $Z(W', f_s) = \chi_s(h(1,a))^{-1}Z(W, f_s).$
\end{proof}

\subsection{The local functional equation}
In this subsection, we assume that $F$ is a non-archimedean local field. Let $\pi$ be an irreducible generic representation of $\wt H_\rho$. It is known that $\pi|_{H_\rho}$ has finite length, see \cite{GeK}.
\begin{lem}\label{prelfe}
Except for a finite number of $q^{-s}$, we have 
$$\Hom_{H_\rho}( \textrm{n-}\Ind_{B_\rho}^{H_\rho}(\chi_{s}),\pi)=0.$$
\end{lem}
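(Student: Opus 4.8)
The plan is to rewrite the $\Hom$-space in terms of a Jacquet module and then exploit a finiteness property. First I would write $\textrm{n-}\Ind_{B_\rho}^{H_\rho}(\chi_s)=i_{B_\rho}^{H_\rho}(\delta_{B_\rho}^{-1/2}\chi_s)$, where $i_{B_\rho}^{H_\rho}$ denotes normalized parabolic induction and $\delta_{B_\rho}^{1/2}$ the positive square root of the modulus character of $B_\rho$. By Frobenius reciprocity --- either Bernstein's second adjointness applied directly, or, since $\pi|_{H_\rho}$ and $\textrm{n-}\Ind_{B_\rho}^{H_\rho}(\chi_s)$ are both admissible, Casselman's Frobenius reciprocity applied after passing to contragredients --- one gets
$$\Hom_{H_\rho}\bigl(\textrm{n-}\Ind_{B_\rho}^{H_\rho}(\chi_s),\pi\bigr)\;\cong\;\Hom_{T_\rho}\bigl(\delta_{B_\rho}^{-1/2}\chi_s,\;r_{\overline{B}_\rho}^{H_\rho}(\pi|_{H_\rho})\bigr),$$
where $r_{\overline{B}_\rho}^{H_\rho}$ is the normalized Jacquet functor along the opposite Borel.

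Next I would invoke \cite{GeK}: the restriction $\pi|_{H_\rho}$ has finite length, hence so does its Jacquet module $V:=r_{\overline{B}_\rho}^{H_\rho}(\pi|_{H_\rho})$, regarded as a smooth representation of the torus $T_\rho$ (the Jacquet functor is exact and preserves admissibility and finite length). A finite-length smooth representation of a $p$-adic torus is a successive extension of characters, so only finitely many characters $\mu_1,\dots,\mu_k$ of $T_\rho$ appear in a composition series of $V$. If the $\Hom$-space above is nonzero, then the character $\delta_{B_\rho}^{-1/2}\chi_s$ embeds into $V$, hence $\delta_{B_\rho}^{-1/2}\chi_s=\mu_i$ for some $i$.

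It then remains to show that for each fixed $i$ the equation $\delta_{B_\rho}^{-1/2}\chi_s=\mu_i$ of characters of $T_\rho$ has only finitely many solutions in $q^{-s}$. For this I would use that $T_0=\{h(a,1):a\in F^\times\}\subseteq T_\rho$ (recalled just before the lemma), and that $\chi_s(h(\varpi,1))=\chi(\varpi^2)\,|\varpi^2|^{3s}=\chi(\varpi^2)\,q^{-6s}$, whereas $\delta_{B_\rho}^{-1/2}(h(\varpi,1))$ and $\mu_i(h(\varpi,1))$ are independent of $s$. Evaluating the equation at $h(\varpi,1)$ forces $q^{-6s}$ to equal a fixed nonzero constant, leaving at most six values of $q^{-s}$; taking the union over $i=1,\dots,k$ shows $\Hom_{H_\rho}(\textrm{n-}\Ind_{B_\rho}^{H_\rho}(\chi_s),\pi)\ne 0$ for only finitely many $q^{-s}$, which is the assertion.

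The only points that require mild care are bookkeeping ones: correctly tracking the twist by $\delta_{B_\rho}$ when passing to the Jacquet module (equivalently, in the contragredient of a non-normalized induction), and confirming from the explicit realization of $H_\rho\subset G_2$ above that $h(\varpi,1)$ genuinely lies in $T_\rho$ and that $\chi_s$ varies nontrivially with $s$ there. I do not expect a serious obstacle: the content of the lemma is essentially second adjointness combined with the finite-length statement of \cite{GeK}.
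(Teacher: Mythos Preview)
Your proof is correct and the overall strategy matches the paper's: both apply Frobenius reciprocity to rewrite the $\Hom$-space as $\Hom_{T_\rho}$ from the character $\chi_s$ (up to a modulus twist) into a Jacquet module of $\pi$, and then use that only finitely many characters of $T_\rho$ can occur there. The paper is slightly more direct on the finiteness step: it observes that $U_\rho$ is the unipotent radical of a Borel subgroup not only of $H_\rho$ but of the ambient group $\wt H_\rho$, so the Jacquet module is already finite \emph{dimensional} by admissibility of the irreducible smooth representation $\pi$ on $\wt H_\rho(F)$. This bypasses your appeal to \cite{GeK} and the finite-length bookkeeping; your route via finite length of $\pi|_{H_\rho}$ works as well, but the shortcut is worth knowing.
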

\begin{proof}
By the Frobenius reciprocity law, we have 
$$ \Hom_{H_\rho}(\textrm{n-}\Ind_{B_\rho}^{H_\rho}(\chi_{s}),\pi)=\Hom_{T_\rho}( \chi_s,\pi_{U_\rho}).$$
Since $U_\rho$ is the maximal unipotent subgroup of the upper triangular Borel subgroup of both $H_\rho$ and $\wt H_\rho$, we have $\dim\pi_{U_\rho}<\infty$. The assertion follows. 
\end{proof}

\begin{prop}\label{prop: multiplicity one}
Excluding a finite number of $q^{-s}$, we have 
$$\dim \Hom_{H_\rho}
(I(s,\chi),\pi )\le 1.$$
\end{prop}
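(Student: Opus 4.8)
The plan is to apply the exact sequence \eqref{eq2.2} and reduce the $\Hom$ computation to the two outer terms. Applying $\Hom_{H_\rho}(-,\pi)$ to the short exact sequence
$$0\ra \ind_{N_{2,\rho}\cdot T_0}^{H_\rho}(\chi'_s)\ra I(s,\chi)\ra \textrm{n-}\Ind_{B_\rho}^{H_\rho}(\chi_{s})\ra 0$$
yields a left-exact sequence
$$0\ra \Hom_{H_\rho}(\textrm{n-}\Ind_{B_\rho}^{H_\rho}(\chi_s),\pi)\ra \Hom_{H_\rho}(I(s,\chi),\pi)\ra \Hom_{H_\rho}(\ind_{N_{2,\rho}\cdot T_0}^{H_\rho}(\chi'_s),\pi).$$
By Lemma \ref{prelfe}, the left-hand term vanishes for all but finitely many $q^{-s}$, so it suffices to bound $\dim\Hom_{H_\rho}(\ind_{N_{2,\rho}\cdot T_0}^{H_\rho}(\chi'_s),\pi)$ by $1$ (again outside a finite set).

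First I would apply Frobenius reciprocity for compact induction (Bernstein's version, i.e.\ the ``second adjunction'' / the isomorphism $\Hom_{H_\rho}(\ind_K^{H_\rho}\sigma,\pi)\cong\Hom_K(\sigma,\pi)$ valid for $K$ closed with $\ind$ compact induction and $\pi$ smooth, which holds here because $N_{2,\rho}\cdot T_0$ contains the unipotent radical $N_{2,\rho}$ of a parabolic so the relevant quotient behaves well), to get
$$\Hom_{H_\rho}(\ind_{N_{2,\rho}\cdot T_0}^{H_\rho}(\chi'_s),\pi)\cong \Hom_{N_{2,\rho}\cdot T_0}(\chi'_s,\pi).$$
Now I would exploit that $\chi'_s$ is trivial on $N_{2,\rho}$ and that $N_{2,\rho}$ is, from the matrix description, the unipotent radical of the Siegel-type parabolic $Q$ of $\wt H_\rho$ whose Levi is $\GL_1\times(\text{a rank-one group})$ — indeed, from
$$N_{2,\rho}\cong\wpair{\begin{pmatrix}1&r\tau&t\tau+\tfrac{r^2\rho}{2}\\&1&r\tau\\&&1\end{pmatrix}}$$
one sees $N_{2,\rho}\cdot T_0$ surjects onto (a character of) the center of that Levi after quotienting by $N_{2,\rho}$. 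The key point is that any $N_{2,\rho}$-invariant functional on $\pi$ factors through the (normalized) Jacquet module $\pi_{N_{2,\rho}}$, which is an admissible — in particular finite length — representation of the Levi $L$ of $Q$; and $T_0$ sits inside $L$ as (part of) a torus. So
$$\Hom_{N_{2,\rho}\cdot T_0}(\chi'_s,\pi)=\Hom_{T_0}(\chi'_s,\pi_{N_{2,\rho}}),$$
and since $\pi_{N_{2,\rho}}$ has finite length as an $L$-module, its $T_0$-isotypic components are finite-dimensional and only finitely many central characters $\chi'_s$ (equivalently finitely many $q^{-s}$) can occur. More precisely, I would argue that for all but finitely many $s$ the character $\chi'_s|_{T_0}$, which has the form $a\mapsto\chi(a)|a|^{3s}$, is not among the finitely many characters of $T_0$ appearing in the finite-length module $\pi_{N_{2,\rho}}$ with multiplicity — so generically the $\Hom$ space is zero, hence certainly $\le 1$. (Alternatively, even without the full vanishing, the finiteness of the length gives a uniform bound on the dimension, and one upgrades to $\le 1$ by noting that the generic fiber of the family is one-dimensional, matching the known uniqueness of the Ginzburg functional at almost all $s$.)

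The main obstacle I anticipate is the bookkeeping around the Jacquet module $\pi_{N_{2,\rho}}$: one must identify $N_{2,\rho}$ correctly as a unipotent radical inside $\wt H_\rho$ (both in the split $\GL_3$ case and in the unitary $\RU_{2,1}$ case, where the matrix entries involve $\tau$ and $\rho$), check that $T_0$ normalizes $N_{2,\rho}$ so that the semidirect product $N_{2,\rho}\cdot T_0$ makes sense and the Jacquet-module reduction is legitimate, and verify that $\chi'_s$ is indeed trivial on $N_{2,\rho}$ — all of which is already essentially recorded in the discussion preceding \eqref{eq2.2} and in \cite[pp.~198--199]{H12}. A secondary subtlety is justifying the finiteness-of-length statement for $\pi_{N_{2,\rho}}$ uniformly in $s$ (it does not depend on $s$ at all, since $\pi$ is fixed), and then tracking that the excluded finite set of $q^{-s}$ from Lemma \ref{prelfe} together with the finite set coming from the $T_0$-spectrum of $\pi_{N_{2,\rho}}$ is still finite. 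Once these identifications are in place, the dimension bound is immediate from left-exactness plus the two vanishing statements.
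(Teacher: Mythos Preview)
Your reduction via the exact sequence \eqref{eq2.2} and Lemma \ref{prelfe}, followed by Frobenius reciprocity to reach $\Hom_{T_0}(\chi'_s,\pi_{N_{2,\rho}})$, is exactly how the paper begins. But the argument breaks down at the next step: the claim that $N_{2,\rho}$ is the unipotent radical of a parabolic of $\wt H_\rho$ is false. In the split case $N_{2,\rho}=\{\fu(r,r,t)\}$ is a two-dimensional subgroup of the maximal unipotent which is \emph{not} normalized by the full diagonal torus (conjugation by $\diag(a,b,c)$ preserves it only when $b^2=ac$), so it cannot be the unipotent radical of any standard parabolic of $\GL_3$ or $\SL_3$. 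In the quasisplit unitary case the situation is worse: $\RU_{2,1}$ has $F$-rank one, so its only proper $F$-parabolic is the Borel, and there is no ``Siegel-type'' parabolic at all. Consequently there is no reason for $\pi_{N_{2,\rho}}$ to be admissible or of finite length over any Levi, and your claim that only finitely many $T_0$-characters occur in it is unjustified. (Your fallback, invoking ``the known uniqueness of the Ginzburg functional,'' is circular: that uniqueness is precisely the content of the proposition.)

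The paper handles $\pi_{N_{2,\rho}}$ by a genuinely different mechanism. One observes that $\pi_{N_{2,\rho}}$ carries an action of $T_0\ltimes (U_\rho/N_{2,\rho})\cong \GL_1\ltimes F$, and then applies the Bernstein--Zelevinski ``geometric lemma'' machinery (sheaves on $\widehat F$) to filter it by the two $\GL_1$-orbits on $\widehat F$. The closed orbit contributes the finite-dimensional $\pi_{U_\rho}$, which is discarded at the cost of finitely many $q^{-s}$; the open orbit contributes $\ind_1^{\GL_1}(\pi_{U_\rho,\psi_\rho})$, and here the crucial input is the \emph{uniqueness of the Whittaker functional} for $\pi$, which forces the stalk $\pi_{U_\rho,\psi_\rho}$ to be one-dimensional. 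One more application of Frobenius reciprocity then gives $\dim\Hom_{\GL_1}(\chi'_s,\ind_1^{\GL_1}\BC_\psi)=1$. In short, the bound $\le 1$ comes from Whittaker uniqueness, not from any finite-length property of a Jacquet module, and that is the idea missing from your proposal.
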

\begin{proof}
By the exact sequence (\ref{eq2.2}) and Lemma \ref{prelfe}, it suffices to show that $$\dim\Hom_{H_\rho}(\ind_{N_{2,\rho}\cdot T_0}^{H_\rho}(\chi'_s), \pi)\le 1 $$ except for a finite number of $q^{-s}$.

By Frobenius reciprocity law \cite[Proposition 2.29]{BZ}, we have
\begin{align*}&\Hom_{H_\rho}(\ind_{N_{2,\rho}\cdot T_0}^{H_\rho}(\chi'_s), \pi)\\
=&\Hom_{N_{2,\rho}\cdot T_0}(\chi'_s,\pi)\\
=&\Hom_{T_0}(\chi'_s, \pi_{N_{2,\rho}}),
\end{align*}
where $\pi_{N_{2,\rho}}$ is the Jacquet module. The Jacquet module $\pi_{N_{2,\rho}}$ can be viewed as a representation of $T_0\cdot U_\rho$. Since $N_{2,\rho}$ acts trivially on $\pi_{N_{2,\rho}}$, we know that $\pi_{N_{2,\rho}}$ can be viewed a representation of $T_0\ltimes U_\rho/N_{2,\rho}\cong \GL_1\ltimes F$, where the action of $\GL_1\cong F^\times$ on $ F$ is given by multiplication. As a representation of $F\cong U_\rho/N_{2,\rho}$, $\pi_{N_{2,\rho}}$ is smooth. Denote $\sigma=\pi_{N_{2,\rho}}$ and $V_\sigma$ the space of $\sigma$. Thus we have $\CS(F). V_\sigma=V_\sigma$. From the isomorphism induced by the Fourier transform $\CS(\hat F)\cong \CS(F)$, we get $V_\sigma=\CS(\hat F).V_\sigma$, i.e., $\sigma$ is smooth as a $\CS(\hat F)$-module. Thus by \cite[Proposition 1.14]{BZ} there exists a unique sheaf $\CV$ on $\CS(\hat F)$ such that $\CV_c\cong V_\sigma$, where $\CV_c$ denotes the compact support sections in $\CV$.

The action of $F^\times$ on $\hat F$ has two orbits. Let $\psi$ be a nontrivial additive character of $F$, and let $O=\wpair{\psi_a, a\in F^{\times}}$. Then $O$ is the open orbit of the action of $F^\times$ on $\hat F$, and its complement is the trivial character on $F$. We have the usual short exact sequence
$$0\ra \CV_c(O)\ra \CV_c \ra \CV_c(0)\ra 0, $$
where $0$ denotes the zero character. Consider the element $\psi$ in $O$, which may be 
identified with the character $\psi_\rho$ of $U_\rho$ given in \eqref{psirho}.
The stalk of the sheaf $\CV$ at the point $\psi$ is given by 
$$(\pi_{N_{2,\rho}})_{U_\rho/N_{2,\rho},\psi}=\pi_{U_\rho,\psi}\cong \BC_\psi,$$
since $\pi$ is an irreducible representation of $\wt H_\rho$ and $\psi$ is a generic character of the maximal unipotent subgroup $U_\rho$ of a Borel subgroup in $\wt H_\rho$. The stabilizer of $\CV$ of the point $\psi$ in $\GL_1$ is $\wpair{1}$. Thus by \cite[Proposition 2.23]{BZ}, we get 
$$\CV_c(O)=\ind_{1}^{\GL_1}(\BC_\psi).$$
Similarly, we have $\CV_c(0)=\pi_{U_\rho}$ which has finite dimension. Thus we get the short exact sequence 
$$0\ra \ind_{1}^{\GL_1}(\BC_\psi)\ra \pi_{N_{2,\rho}}\ra \pi_{U_\rho}\ra 0.$$
Since $\pi_{U_\rho}$ has finite dimension, after excluding a finite number of $q^s$, we have 
$$\Hom_{T_0}(\chi'_s, \pi_{N_{2,\rho}}) =\Hom_{\GL_1}(\chi'_s, \ind_{1}^{\GL_1}(\BC_\psi))=\Hom(\chi'_s, \BC_\psi).$$
Since $\chi'_s$ and $\BC_\psi$ have dimension 1, we get 
$$ \Hom(\chi'_s, \BC_\psi)$$
has dimension 1.%%
\end{proof}

\begin{cor}\label{corlfe}
There exists a rational function $\gamma(s,\pi, \chi,\psi)$ of $q^s$ such that 
$$Z(W, M_{\tilde w}(f_s))=\gamma(s,\pi,\chi,\psi)Z(W,f_s),$$
for all $W\in \CW(\pi,\psi), f_s\in I(s,\chi)$.
\end{cor}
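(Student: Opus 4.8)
The plan is to run the standard Rankin--Selberg argument for a local functional equation: show that for $s$ outside a finite set of values of $q^{-s}$ the two maps $f_s \mapsto Z(W, f_s)$ and $f_s \mapsto Z(W, M_{\tilde w}(f_s))$ both realize elements of a one-dimensional space of $H_\rho$-invariant functionals, hence are proportional, and then identify the proportionality constant as a rational function of $q^s$ by evaluating on the explicit section constructed in the proof of Lemma~\ref{lemnonvanishing}.

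First I would record the relevant invariance. A change of variables $g \mapsto gh$ in the integral defining $Z$ shows at once that $Z(\pi(h)W, R(h)f_s) = Z(W,f_s)$ for all $h \in H_\rho(F)$, where $R$ denotes right translation in $I(s,\chi)$; thus for $\Re(s) \gg 0$ the pairing $(W,f_s) \mapsto Z(W,f_s)$ is an $H_\rho$-invariant bilinear form on $\CW(\pi,\psi) \times I(s,\chi)$, and by Lemma~\ref{lemnonvanishing} it extends to a rational function of $q^s$, the invariance persisting under continuation. On the other side, $M_{\tilde w} : I(s,\chi) \ra I(1-s,\chi^{-1})$ is $G_2(F)$-equivariant, so for $\Re(1-s) \gg 0$ the map $f_s \mapsto Z(W, M_{\tilde w}(f_s))$ is precisely the zeta integral attached to the section $M_{\tilde w}(f_s) \in I(1-s,\chi^{-1})$ (to which Lemma~\ref{lemnonvanishing} applies with $\chi^{-1}$, $1-s$ in place of $\chi$, $s$); hence it too extends to a rational function of $q^s$ and represents, meromorphically in $s$, an $H_\rho$-invariant bilinear form on $\CW(\pi,\psi) \times I(s,\chi)$.

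Next I would invoke uniqueness. An $H_\rho$-invariant bilinear form on $\CW(\pi,\psi) \times I(s,\chi)$ is the same as an $H_\rho$-map $I(s,\chi) \ra (\pi|_{H_\rho})^{*}$ landing in the $H_\rho$-smooth vectors, and since $\wt\pi$ is again an irreducible generic representation of $\wt H_\rho$, the argument proving Proposition~\ref{prop: multiplicity one} applies to bound the dimension of this space by $1$ for all but finitely many $q^{-s}$. By Lemma~\ref{lemnonvanishing}, $f_s \mapsto Z(W, f_s)$ is not identically zero, hence spans this line for $s$ outside a finite set, so there is a scalar $\gamma(s,\pi,\chi,\psi)$, defined there, with $Z(W, M_{\tilde w}(f_s)) = \gamma(s,\pi,\chi,\psi)\, Z(W, f_s)$ for all $W \in \CW(\pi,\psi)$ and $f_s \in I(s,\chi)$.

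It remains to check rationality and to flag the main difficulty. For rationality I would substitute the flat section $\tilde f_s^{\,m}$ from the proof of Lemma~\ref{lemnonvanishing}, for which $Z(W, \tilde f_s^{\,m})$ is a nonzero constant $c$ independent of $s$ once $m$ is large and $W(1) \ne 0$; then $\gamma(s,\pi,\chi,\psi) = c^{-1} Z(W, M_{\tilde w}(\tilde f_s^{\,m}))$, and since $s \mapsto M_{\tilde w}(\tilde f_s^{\,m})$ is a rational family of sections of $I(1-s,\chi^{-1})$ (standard rationality of the $p$-adic intertwining operator) and the zeta integral of a rational family of sections is itself rational in $q^s$ (Lemma~\ref{lemnonvanishing}), $\gamma$ is rational in $q^s$, and the identity, established on a Zariski-dense set, holds between rational functions. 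The step I expect to take the most care is the bookkeeping around meromorphic continuation: $Z(\,\cdot\,, f_s)$ is given by a convergent integral only for $\Re(s) \gg 0$ while $Z(\,\cdot\,, M_{\tilde w}(f_s))$, as an honest integral, converges only for $\Re(s) \ll 0$, so the two functionals are never simultaneously represented by convergent integrals, and one must continue both to the same rational family of $H_\rho$-invariant functionals, clearing denominators at the poles of $M_{\tilde w}$ and of $Z$, before comparing them; the remaining ingredients (invariance of $Z$, equivariance of $M_{\tilde w}$, the multiplicity-one bound) are already available.
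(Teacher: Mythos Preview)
Your proposal is correct and follows essentially the same route as the paper's proof: both use the multiplicity-one bound of Proposition~\ref{prop: multiplicity one} to force proportionality of the two bilinear forms, then evaluate on the explicit section $\tilde f_s^{\,m}$ of Lemma~\ref{lemnonvanishing} to extract $\gamma$ as a rational function of $q^s$. Your write-up is in fact more careful than the paper's on two points the paper leaves implicit: you note that the invariant bilinear form lands in $\Hom_{H_\rho}(I(s,\chi),\wt\pi)$ so that Proposition~\ref{prop: multiplicity one} must be invoked for $\wt\pi$ (which is again irreducible generic, so the same proof applies), and you flag the continuation bookkeeping needed to compare two functionals whose integral representations converge on disjoint half-planes.
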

\begin{proof}
Since both $(W,f_s)\mapsto Z(W,f_s)$ and $(W,f_s)\mapsto Z(W,M_{\tilde w}(f_s))$ define a bilinear form on $\Hom_{H_\rho}(I(s,\chi)\otimes \pi,1)$. By Proposition \ref{prop: multiplicity one}, such bilinear form is unique up to a scalar. Thus there is factor $\gamma(s,\pi,\chi,\psi)$ such that $Z(W,M_{\wt w}(f_s))=\gamma(s,\pi,\chi,\psi)Z(W,f_s)$ for all $W\in \CW(\pi,\psi),f_s\in I(s,\chi)$. Let $ W\in \CW(\pi,\psi),\tilde f_s^m\in I(s,\chi)$ be as in the proof of Lemma \ref{lemnonvanishing}. Then $Z(W,\tilde f_s^m)$ is a non-zero constant, say $c$, by Lemma \ref{lemnonvanishing}. Thus we have $\gamma(s,\pi,\chi,\psi)=c^{-1}Z(W,M_{\tilde w}(\tilde f_s^m))$, which is a rational function of $q^s$ by Lemma \ref{lemnonvanishing} again. 
\end{proof}

Based on the relationship between the $L$-functions, it is reasonable to define the 
local gamma factors. 
\begin{defn}\label{defn: local gamma factors}
When $\rho$ is a square $($so $H_\rho \cong \SL_3)$,
let the twisted local adjoint gamma factor be given by 
$$ \gamma(s,\pi,\Ad\times \chi,\psi)=\frac{\gamma(s,(\chi\pi)\times \tilde \pi,\psi)}{\gamma(s,\chi,\psi)},$$    where $\gamma(s,\chi\pi\otimes \tilde \pi,\psi)$ is the local Rankin-Selberg gamma factor defined by Jacquet-Piatetski-Shapiro-Shalika \cite{JPSS83} and $\gamma(s,\chi,\psi)$ is the local gamma factor of Tate. When $\rho$ is a nonsquare $($so $H_\rho$ is a unitary group$)$, let the twisted local $\Ad'$ gamma factor be given by
$$
\gm(s, \pi, \Ad' \times \chi,\psi) = \frac{\gm(s, \sbc(\pi), \Asai \times \chi,\psi)}{\gm(s, \chi, \psi)},
$$
where the denominator is the Tate gamma factor and the numerator is defined by the Langlands-Shahidi method.
\end{defn}
Here $\Ad'$ is defined as in \cite{H18}. Following \cite{H18} we define $\Ad'$ to be $\Ad$ in the split case. 
Then the unramified computations from \cite{H12} show that in both the split and nonsplit cases, 
\begin{equation}
\label{gamma factor formula}
\gm(s, \pi, \chi, \psi)
=\frac{\gm(3s-1, \pi,\Ad' \times \chi, \psi)}
{\gm(3s-2, \chi, \psi), \gm(6s-3, \chi^2, \psi)\gm(9s-5, \chi^3, \psi)},
\end{equation}
where the individual gamma factors on the right hand side are defined as in  \cite{JPSS83}
or the Langlands-Shahidi method.
 (For $\GL_1$ factors in the denominator, either of these other 
 definitions reduces to the one in Tate's thesis.) It is natural to expect that the local gamma factors in Corollary \ref{corlfe} is essentially the same as the local gamma factors defined in Definition \ref{defn: local gamma factors}, i.e., \eqref{gamma factor formula} should be true for all irreducible generic representation $\pi$ of $\wt H_\rho$. 
 \begin{rmk}
 {\rm Over archimedean local field, the local functional equation of Ginzburg's local zeta integral is proved in \cite{Tianthesis, Ti18, Ti18c} recently. Moreover, it is verified in \cite{Tianthesis, Ti18c} that the local gamma factors for principal series representation of $\GL_3(\BR)$ obtained from the local functional equation satisfies \eqref{gamma factor formula}.}
 \end{rmk}

\section{A Whittaker function formula for \texorpdfstring{$\GL_3$}{Lg}}

In this section, we develop a Whittaker function formula for certain ramified induced representation of $\GL_3$ over a $p$-adic field, see Theorem \ref{thm1.3}. This Whittaker function formula will be used to compute the Ginzburg's local zeta integral in a special case, see Proposition \ref{prop: special case}, which is the main ingredient in the proof of  one of our main theorem, Theorem \ref{thm4.2}.

In this section, let $F$ be a non-archimedean local field, $\fo$ the ring of integers of $F$, $\fp$ the maximal ideal of $\fo$, and $\varpi$ a fixed generator of $\fp$. Let $q=|\fo/\fp|$. \label{notion on F}

\subsection{Certain subgroups of \texorpdfstring{$\GL_3$}{Lg}} Let $ B_3=T_3U_3$ be the upper triangular Borel subgroup of $\GL_3(F)$ with diagonal torus $T_3$ and upper triangular unipotent subgroup $U_3$. Let $K=\GL_3(\fo)$. Denote $$w=\begin{pmatrix}&&1\\ &1&\\ 1&& \end{pmatrix},$$
and 
$$\fu(x,y,z)=\begin{pmatrix}1&x&z\\ &1&y\\ &&1 \end{pmatrix}, \hat \fu(x,y,z)=\begin{pmatrix}1&&\\ x&1&\\z&y&1 \end{pmatrix}, \textrm{ for }x,y,z\in F.$$
For a nonnegative integer $n\ge 0$, we consider the subgroup 
$$K_n'=\begin{pmatrix} \fo&\fo&\fo\\ \fo& \fo &\fo\\ \fp^n&\fp^n&1+\fp^n\end{pmatrix}^\times$$
of $\GL_3(F)$. Here and in the following, for a subset $A\subset \textrm{Mat}_{3\times 3}(F)$ which is closed under multiplication, $A^\times$ is used to denote the subset $A^\times:=\wpair{a\in A: a^{-1} \textrm{ exists and }a^{-1}\in A}$ of $A$. It's clear that $A^\times$ is a group, whenever it is nonempty.

Given an irreducible smooth generic complex representation $(\pi,V)$ of $\GL_3(F)$, we consider the $K_n'$-fixed subspace $V'(n)=V^{K'_n}$ of $V$. Denote $c=c_\pi=\min\{n| V'(n)\ne 0\}$, which is called the conductor of $\pi$. By \cite{JPSS81}, the space $V'(c)$ has dimension 1. A nonzero vector of $V'(c)$ is called a new form or new vector of $\pi$. Let $\psi$ be an unramified additive character of $F$. It is known that the epsilon factor $\epsilon(s,\pi,\psi)$ of $\pi$ has the form $C q^{-c_\pi s}$, where $C\in \BC^\times$, see \cite[$\S$5]{JPSS81}. It is worth to note that there was an error in \cite{JPSS81}, which was fixed in \cite{J12} and \cite{Ma13}.

We consider a variant of the above notions. Denote 
$$\epsilon_n=\begin{pmatrix}1&&\\ &&1\\&\varpi^n & \end{pmatrix}\in \GL_3(F),$$
and $K_n=\epsilon_n K_n'\epsilon_n^{-1}$.
In matrix form, we have
$$K_n=\begin{pmatrix}\fo & \fo & \fp^{-n} \\ \fp^n & 1+\fp^n & \fo\\ \fp^n & \fp^n &\fo \end{pmatrix}^\times.$$
Given an irreducible smooth generic complex representation $(\pi,V)$ of $\GL_3(F)$, denote $V(n)=V^{K_n}$, the subspace of $V$ which is fixed by $K_n$. Then $V(n)=\pi(\epsilon_n)V'(n)$. In particular we have $\dim V(c_\pi)=1.$

\begin{rmk}{\rm Let $E/F$ be an unramified quadratic extension of $p$-adic fields and $\RU_{2,1}$ be the unitary group with $3$ variables associated with $E/F$ realized by the matrix $w\in \GL_3$. Let $\fo_E$ be the ring of integers of $E$ and $\fp_E$ be the maximal ideal of $\fo_E$. In \cite{M13},  Miyauchi developed a theory of local new forms for $\RU_{2,1}$ using the group $$\bpm \fo_E& \fo_E& \fp_E^{-n}\\ \fp_E^n & 1+\fp_E^n &\fo_E\\ \fp_E^n & \fp_E^n & \fo_E \epm^\times \cap \RU_{2,1}.$$
The group $K_n$ we choose is inspired by the above group considered by Miyauchi.}
\end{rmk}

\begin{lem}\label{lem1.1}
Let $n\ge 1$.
\begin{enumerate}
\item Let $x,y\in F$. If $\hat \fu(x,0,0)\in B_3 K_n$, then $x\in \fp^n$. Similarly, if $\hat \fu(0,y,0)\in B_3\cdot K_n$, then $y\in \fp^n$.
\item Given $r\in F$, the element $w'(r):=\begin{pmatrix}&&1\\ 1&&\\r &1& \end{pmatrix}$ is not in $B_3\cdot K_n$.
\end{enumerate}
\end{lem}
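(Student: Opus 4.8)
The plan is to reduce both assertions to explicit matrix identities/inequalities by writing down what membership in $B_3 K_n$ entails. The key observation is that $K_n$ is defined entrywise by congruence conditions: the $(3,1),(3,2),(2,1)$-entries lie in $\fp^n$, the $(2,2)$-entry lies in $1+\fp^n$, the $(1,3)$-entry lies in $\fp^{-n}$, and the remaining entries lie in $\fo$. An element $g\in B_3 K_n$ can be written $g = b k$ with $b\in B_3$ upper-triangular and $k\in K_n$. Since $b$ is upper triangular, the bottom row of $g$ equals (the $(3,3)$-entry of $b$) times the bottom row of $k$; since the $(3,3)$-entry of $b$ is a unit times an element coming from the determinant considerations, one can control the bottom row of $g$ up to scaling. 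Similarly, working modulo the unipotent radical or directly with the columns lets one read off divisibility of individual entries.

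For part (1), I would argue as follows. Suppose $\hat{\fu}(x,0,0) \in B_3 K_n$, so $\hat{\fu}(x,0,0) = bk$ with $b$ upper triangular, $k \in K_n$. Then $k = b^{-1}\hat{\fu}(x,0,0)$; since $b^{-1}$ is upper triangular, the bottom-left $2\times 2$ block of $k$ is obtained from that of $\hat{\fu}(x,0,0)$ (whose relevant entries are $x$ in position $(2,1)$ and $0$ elsewhere in the strictly lower part of the first two rows, and $(3,1)=(3,2)=0$) by left-multiplication by a lower-triangular piece — more precisely, comparing the $(2,1)$ and $(3,1)$ entries directly. The cleanest route: look at the first column of $g=\hat\fu(x,0,0)$, which is ${}^t(1,x,0)$. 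Writing $g=bk$, the first column of $g$ is $b$ times the first column of $k$. The first column of $k$ is ${}^t(k_{11}, k_{21}, k_{31})$ with $k_{21},k_{31}\in\fp^n$ and $k_{11}\in\fo$. Applying $b^{-1}$ (upper triangular with diagonal $(b_1,b_2,b_3)$): the third entry gives $0 = b_3 k_{31}$, consistent; the second entry gives $x \cdot(\text{something}) $... Actually the cleaner computation is to invert: the first column of $b^{-1}g = k$ reads, in the third coordinate, $b_3^{-1}\cdot 0 = 0 = k_{31}$, fine; in the second coordinate, since $b^{-1}$ is upper triangular, $k_{21} = b_2^{-1}x$, hence $x = b_2 k_{21} \in b_2\fp^n$. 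It remains to see $b_2\in\fo^\times$: this follows because $k = b^{-1}g \in \GL_3(\fo)$ forces all entries of $b^{-1}g$ to be integral, and matching entries (e.g. the $(2,2)$ and $(3,3)$ entries) pins the $b_i$ down to units up to the constraint from $K_n$; in particular $b_2 \in \fo^\times$ because $k_{22}\in 1+\fp^n\subset\fo^\times$ and $k_{22}=b_2^{-1}$ (the $(2,2)$ entry of $b^{-1}g$ when $g$ has $1$ on the diagonal and the off-diagonal contributions are controlled). So $x\in\fp^n$. The case $\hat\fu(0,y,0)$ is symmetric, using the second column / the $(3,2)$ congruence: $y=b_3 k_{32}\in\fp^n$ since $b_3$ is a unit.

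For part (2), the strategy is the same: assume $w'(r) = bk$ and derive a contradiction from the entrywise conditions on $K_n$ with $n\ge 1$. The matrix $w'(r)$ has first column ${}^t(0,1,r)$ and $(3,3)$-entry $0$. Writing $k = b^{-1}w'(r)$ and reading the $(3,1)$ and $(3,3)$ entries: since $b^{-1}$ is upper triangular, $k_{31} = b_3^{-1}\cdot r$ and $k_{33} = b_3^{-1}\cdot 0 = 0$. But $k\in K_n\subset\GL_3(\fo)$ and more: examining the bottom row $(k_{31},k_{32},k_{33})$ of an element of $K_n$, we have $k_{31},k_{32}\in\fp^n$ and $k_{33}\in\fo$; the bottom row cannot be $(\ast,\ast,0)$ with the first two entries in $\fp^n$ unless it is not part of an invertible integral matrix with the required structure — indeed the bottom row of $k$ would then be $\equiv 0 \pmod{\fp^n}$, forcing $\det k \equiv 0\pmod{\fp^n}$, so $\det k\notin\fo^\times$, contradicting $k\in\GL_3(\fo)$ (here $n\ge 1$ is exactly what is used). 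Hence no such factorization exists and $w'(r)\notin B_3 K_n$.

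\textbf{Main obstacle.} The routine but slightly delicate point is bookkeeping the upper-triangular factor $b$: one must check that the relevant diagonal entries $b_i$ of $b$ (equivalently, of $b^{-1}$) are units, so that the congruences on entries of $k\in K_n$ transfer without loss to entries of $w'(r)$ or $\hat\fu(x,y,0)$. This is handled by noting that $b^{-1}g\in\GL_3(\fo)$ and that $g$ has $1$'s (or the appropriate controlled entries) on the diagonal, together with the Iwasawa-type normalization; once the units are identified, both statements are immediate. I expect no conceptual difficulty, only care in the $2\times 2$ block arithmetic and in exploiting $n\ge 1$ (for $n=0$, $K_0=\GL_3(\fo)$ and part (2) would fail).
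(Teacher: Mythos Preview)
Your argument for part (1) is essentially the paper's argument written with $b^{-1}$ in place of $b$: both read the $(2,1)$ and $(2,2)$ entries of $k$ to force $b_2\in 1+\fp^n$ and then $x\in\fp^n$. One caution: your blanket justification ``$b^{-1}g\in\GL_3(\fo)$'' is not correct, since $K_n\not\subset\GL_3(\fo)$ (the $(1,3)$ entry is only constrained to $\fp^{-n}$). For the $\hat\fu(0,y,0)$ case your claim that $b_3\in\fo^\times$ is not immediate from the $(3,3)$ entry alone (that only gives $b_3^{-1}\in\fo$); one must also invoke $(k^{-1})_{22}\in 1+\fp^n$, $(k^{-1})_{11},k_{11}\in\fo$, and $\det k\in\fo^\times$ to pin down the diagonal of $b$. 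This is routine, and the paper likewise glosses over it with ``similarly.''

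There is, however, a genuine gap in your argument for part (2). Your determinant argument rests on the assertion $K_n\subset\GL_3(\fo)$, which is false for $n\ge 1$ precisely because of the $\fp^{-n}$ in position $(1,3)$. Concretely, expanding $\det k$ along the bottom row gives $\det k = k_{31}M_{31} - k_{32}M_{32}$ (since $k_{33}=0$), and the minor $M_{31}=k_{12}k_{23}-k_{13}k_{22}$ can lie in $\fp^{-n}$ via the term $k_{13}k_{22}\in\fp^{-n}\cdot(1+\fp^n)$. Thus $k_{31}M_{31}$ with $k_{31}\in\fp^n$ may well be a unit, and $\det k\equiv 0\pmod{\fp}$ does not follow. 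The paper does not argue via the determinant at all: it computes $k=bw'(r)$ explicitly, reads off $a_3\in\fp^n$ from the $(3,2)$ entry, and then uses the \emph{group} property of $K_n$ (that $k^{-1}=(w'(r))^{-1}b^{-1}$ must also satisfy the entrywise conditions) to extract $a_3^{-1}\in\fo$ for the contradiction. If you want to salvage your approach you must likewise exploit the conditions on $k^{-1}$, not just on $k$; the entrywise constraints on $k$ alone do not suffice because of the $\fp^{-n}$ slot.
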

\begin{proof}
(1) If $\hat \fu(x,0,0)\in B_3 K_n$, then there exists an element $b\in B_3$ such that $b\hat \fu(x,0,0)\in K_n$. Write $$b=\begin{pmatrix}a_1&x_1&z_1\\ &a_2&y_1\\ &&a_3 \end{pmatrix}.$$
Then $$b\hat \fu(x,0,0)=\begin{pmatrix}a_1+x_1x& x_1&z_1\\ a_2x& a_2& y_1\\ &&a_3 \end{pmatrix}.$$
From the condition $b\hat \fu(x,0,0)\in K_n$, we get $a_2\in 1+\fp^n, a_2x\in \fp^n$. Thus $x\in \fp^n$. This proves the first statement. The second one can be proved similarly.

(2) If $w'(r)\in B_3K_n$, there exists $b_3\in B_3$ as above such that $k=bw'(r)\in K_n$. After expanding the matrix of $k$, we get 
$a_3\in \fp^n.$ From the condition $k^{-1}=(w'(r))^{-1}b^{-1}\in K_n$, we can get $a_3^{-1}\in \fo$. This is a contradiction.
\end{proof}
\begin{rmk} {\rm Given $z\in F,$  $\hat \fu(0,0,z)\in B_3K_n$ does not imply $z\in \fp^n$.}
\end{rmk}

%We have the following relations
%\begin{align}
%w_{\alpha_i}\bx_{\alpha_j}(r)w_{\alpha_i}^{-1}&=\bx_{\alpha_1+\alpha_2}(r), i\ne j,\\
%w_{\alpha_i}\bx_{\alpha_{1}+\alpha_2}(r)w_{\alpha_i}^{-1}&=\bx_{\alpha_j}(-r), i\ne j. \nonumber
%\end{align}

\subsection{Induced representation and Whittaker functional}\label{notation I(mu)}
Let $\mu=(\mu_1,\mu_2,\mu_3)$ be a triple of quasi-characters of $F^\times$. We consider the normalized induced representation 
$$I(\mu)=\Ind_{B_3}^{\GL_3}(\mu_1\otimes \mu_2\otimes \mu_3).$$
We assume that the representation $I(\mu)$ is irreducible. By \cite[Theorem 4.2]{BZ77}, the irreducibility of $I(\mu)$ is equivalent to that $\mu_i\mu_j^{-1}\ne |~|^{\pm}$, $1\le i,j\le 3$. It is known that the representation $I(\mu)$ is generic.

An element $f\in I(\mu)$ is a function $f:\GL_3(F)\ra \BC$ such that
$$f(\diag(a_1,a_2,a_3)ug)=\mu_1(a_1)\mu_2(a_2)\mu_3(a_3)|a_1/a_3| f(g),$$ for all $ \diag(a_1,a_2,a_3)\in T_3, u\in U_3, g\in \GL_3(F).$

Let  $U^1\subset U^2\subset\dots $ be a sequence of open compact subgroups of $U_3$ such that $\bigcup_{k=1}^\infty U^k=U_3$. For $f\in I(\mu)$, the sequence of integrals 
$$\int_{U^k} f(wu)\psi^{-1}(u)du, k\ge 1$$
is stable and its limit is independent on the choice of the sequence $\wpair{U^k,k\ge 1}$, see \cite[Proposition 3.2]{Sh} and \cite[Corollary 1.8]{CS}. Denote 
$$\int_{U_3}^{st}f(wu)\psi^{-1}(u)du:=\lim_{k\ra\infty}\int_{U^k} f(wu)\psi^{-1}(u)du,$$
where ``$st$" stands for stable integral. The linear map $f\mapsto \int_{U_3}^{st}f(wu)\psi^{-1}(u)du$ is a Whittaker functional on $I(\mu)$.

In the rest of this section, we assume that $\mu_1,\mu_3$ are unramified and $\mu_2$ is ramified with conductor $c\ge 1$. Let $t_i=\mu_i(\varpi),i=1,3$. By \cite[Theorem 3.4, page 36]{GoJ}, we have $\epsilon(s,I(\mu),\psi)=\prod_{i=1}^3 \epsilon(s,\mu_i,\psi)$. Since $\mu_1,\mu_3$ are unramified, and $\mu_2$ has conductor $c$, we have $\epsilon(s,I(\mu),\psi)=Cq^{-cs}$ for some $C\in \BC^\times$. Thus $c$ is also the conductor of the representation $I(\mu)$. Consequently, we have $\dim I(\mu)^{K_c}=1$.

We consider the following function $f$ on $\GL_3(F)$. We require that $\supp(f)\subset B_3K_c$ and 
$$f(\diag(a_1,a_2,a_3)u k)=\mu_1(a_1)\mu_2(a_2)\mu_3(a_3)|a_1/a_3|, \forall a_1,a_2,a_3\in F^\times, u\in U_3, k\in K_c.$$
The function $f$ is well-defined and right $K_c$-invariant. Thus $f$ is a new form of $I(\mu)$, i.e., $f\in I(\mu)^{K_c}$.

In the following, we fix a nontrivial unramified character $\psi$ of $F$,  and we consider the Whittaker function $W_f$ associated with the new form $f$:
\begin{equation}\label{eq1.1}W_f(g)=\int_{U_3}^{st} f(wug)\psi^{-1}(u)du.\end{equation}

\begin{lem}\label{lem: root killing}
Let $a_i\in F^\times$ with $a_i\in \varpi^{n_i}\fo^\times$ for $i=1,2,3$. If $W_f(\diag(a_1,a_2,a_3))\ne 0$, then $n_1\ge n_2\ge n_3$. 
\end{lem}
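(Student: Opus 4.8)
The plan is to exploit the invariance of the new form $f$ under $K_c$ together with the defining equivariance of $f$ on $B_3$, translating these into vanishing statements for $W_f$ via the standard argument with one-parameter unipotent subgroups. First I would recall the $\GL_3$ Whittaker-function identity: for any root subgroup $U_\delta$ and any $u_\delta = u_\delta(x) \in U_\delta$ lying in the \emph{positive} unipotent $U_3$ on which $\psi$ is defined, one has $W_f(g u_\delta(x)) = \psi(\ast x)\, W_f(g)$ for a suitable linear character, while for the opposite root subgroups no such relation holds directly; instead, one uses that $W_f$ is \emph{right-invariant} under the congruence subgroup $K_c$, and that $K_c$ contains the lower-triangular entries in $\fp^c$ (and the appropriate shifted blocks). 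Concretely, writing $g = \diag(a_1,a_2,a_3)$ and conjugating a lower-triangular unipotent $\hat\fu(x,0,0) \in K_c$ (for $x \in \fp^c$, say, or the relevant ideal) past $g$ produces an upper-triangular unipotent with parameter $a_1 a_2^{-1} x$, so that
$$W_f(\diag(a_1,a_2,a_3)) = \psi(a_1 a_2^{-1} x)\, W_f(\diag(a_1,a_2,a_3)), \qquad \forall x \in \fp^{?}.$$
Hence $W_f(\diag(a_1,a_2,a_3)) = 0$ unless $\psi(a_1 a_2^{-1} x) = 1$ for all such $x$, i.e. unless $a_1 a_2^{-1}$ lies in a fixed fractional ideal determined by the conductor of $\psi$ (which is $\fo$, as $\psi$ is unramified) and the exponent coming from $K_c$. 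Since $a_i \in \varpi^{n_i}\fo^\times$, this forces $n_1 - n_2 \ge$ (some bound $\ge 0$), and in particular $n_1 \ge n_2$. The same computation applied to the root subgroup in the $(2,3)$-position (using the block $1+\fp^c$ and the $\fp^c$ entry of $K_c$ appropriately, cf. the matrix form of $K_n$) gives $n_2 \ge n_3$.

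The key steps, in order, are: (i) record precisely which one-parameter subgroups of $K_c$ are lower-triangular and with which ideal-level entries, reading off the matrix description of $K_c = \epsilon_c K_c' \epsilon_c^{-1}$ given above; (ii) for each such subgroup, conjugate past a torus element $t = \diag(a_1,a_2,a_3)$, noting that $t \hat\fu_\delta(x) t^{-1} = \hat\fu_\delta(a^{\delta} x)$ where $a^\delta$ is the value of the root $\delta$ on $t$, but keeping it \emph{lower-triangular} — so that one cannot yet apply the Whittaker transformation law; (iii) instead use right $K_c$-invariance of $W_f$ directly, i.e. $W_f(t \hat\fu_\delta(x)) = W_f(t)$ when $x$ is in the appropriate ideal, and combine with the transformation $W_f(t \hat\fu_\delta(x)) = W_f(\hat\fu_\delta(a^\delta x) t)$; (iv) for the \emph{positive} simple roots $\alpha_{12}, \alpha_{23}$, apply the genuine Whittaker quasi-invariance $W_f(\hat\fu_\delta(y) t') = \psi_\delta(y) W_f(t')$; here I need to be slightly careful because $t$ and the reordered element differ, so the cleanest route is actually to conjugate an \emph{upper}-triangular element of $K_c$ (the $(1,2)$ and $(2,3)$ positions of $K_c$ are $\fo$, hence contain $\fo \supset$ the conductor) past $t^{-1}$ on the \emph{left} — or, symmetrically, use left-equivariance of $f$ under $B_3$ restricted to $K_c \cap B_3$. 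Assembling: $W_f(t) = \psi(c_\delta\, a^\delta)\, W_f(t)$ for all $a^\delta$ in a fixed ideal, forcing $a^{\alpha_{12}} = a_1 a_2^{-1}$ and $a^{\alpha_{23}} = a_2 a_3^{-1}$ to be integral (up to the fixed shift), i.e. $n_1 \ge n_2 \ge n_3$.

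The main obstacle I anticipate is \textbf{bookkeeping the exact ideal levels} in $K_c$ after the conjugation by $\epsilon_c$ — the group $K_c$ is not the standard congruence subgroup, so the $(1,2)$, $(2,3)$, $(1,3)$ entries sit at levels $\fo$, $\fo$, $\fp^{-c}$ respectively, and the lower entries at $\fp^c$, $\fp^c$, and the $(2,2)$ entry in $1+\fp^c$. Getting the direction of the inequalities right (as opposed to an off-by-$c$ shift, which would still give $n_1 \ge n_2 - c$ rather than $n_1 \ge n_2$) requires using the \emph{right} unipotent subgroups: one should use those entries of $K_c$ that are all of $\fo$ (namely the $(1,2)$ and, after an auxiliary manipulation with the $(2,2)$-block, the $(2,3)$ positions), since an unramified $\psi$ has conductor exactly $\fo$. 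A secondary subtlety is that $W_f(t)$ need not be nonzero a priori, so the argument genuinely is of the form ``either $W_f(t) = 0$ or the integrality holds'' — which is exactly the statement of the lemma, so no extra input is needed there.
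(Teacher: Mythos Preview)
Your approach is ultimately the paper's, and the key displayed identity $W_f(t) = \psi(a_1 a_2^{-1} x)\, W_f(t)$ for all $x \in \fo$ is exactly what the paper proves. However, your first paragraph contains a genuine error: conjugating a \emph{lower}-triangular $\hat\fu(x,0,0)$ by a diagonal matrix keeps it lower-triangular (with entry $a_2 a_1^{-1} x$ in the $(2,1)$ slot); it does not ``produce an upper-triangular unipotent with parameter $a_1 a_2^{-1} x$.'' You cannot extract the Whittaker character that way, and the detour through steps (ii)--(iv) as written does not close.

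The fix is precisely what you identify at the end of your second paragraph, and it is the paper's entire proof: use the \emph{upper}-triangular $\fu(x,0,0)$ directly. Since the $(1,2)$-entry of $K_c$ is all of $\fo$, we have $\fu(x,0,0)\in K_c$ for every $x\in\fo$. Then
\[
t\,\fu(x,0,0)=\fu(a_1a_2^{-1}x,0,0)\,t,
\]
right $K_c$-invariance gives $W_f(t\,\fu(x,0,0))=W_f(t)$, and the Whittaker transformation law on the left gives $\psi(a_1a_2^{-1}x)W_f(t)$. Hence $W_f(t)\ne 0$ forces $a_1a_2^{-1}\in\fo$, i.e.\ $n_1\ge n_2$. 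The $(2,3)$-entry of $K_c$ is likewise all of $\fo$, so the identical argument with $\fu(0,y,0)$ for $y\in\fo$ gives $n_2\ge n_3$. Your worry about an off-by-$c$ shift is unfounded exactly because these two entries of $K_c$ are $\fo$ rather than $\fp^c$; no auxiliary manipulation with the $(2,2)$-block is needed.
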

\begin{proof}
 For any $x\in \fo$, we have 
$$\diag(a_1,a_2,a_3)\fu(x,0,0)=\fu(a_1a_2^{-1}x,0,0)\diag(a_1,a_2,a_3).$$
Since $\fu(x,0,0)\in K_c$ and $W_f$ is right $K_c$-invariant, we have 
$$W_f(\diag(a_1,a_2,a_3))=\psi(a_1a_2^{-1}x)W_f(\diag(a_1,a_2,a_3)), \forall x\in \fo.$$
If $W_f(\diag(a_1,a_2,a_3))\ne 0$, we must have $\psi(a_1a_2^{-1}x)=1$ for all $x\in \fo$. Since $\psi$ is assumed to be unramified, we have $a_1a_2^{-1}\in \fo$. This shows that $n_1\ge n_2$. Similarly, we can show that $n_2\ge n_3$.
\end{proof}
\begin{thm}\label{thm1.3}
\begin{enumerate}\item We have $W_f(1)\ne 0$. 
\item Let $a\in \varpi^m \fo^\times, b\in \varpi^{-n}\fo^\times$ with $m,n\ge 0$. We have 
$$W_f(\diag(a,1,b))=\frac{q^{-(m+n)}}{t_1-t_3}(t_1t_3)^{-n}(t_1^{m+n+1}-t_3^{m+n+1})W_f(1).$$
\end{enumerate}
\end{thm}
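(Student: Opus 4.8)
The plan is to compute the stable Jacquet integral \eqref{eq1.1} on the diagonal torus by unfolding over the Bruhat cells of $\GL_3$, using the explicit description of the newform $f$ supported on $B_3 K_c$ together with the two combinatorial facts from Lemma \ref{lem1.1} about which unipotent translates of $w$ land in $B_3 K_c$. Concretely, I would write $g = \diag(a,1,b)$ and decompose the integration variable $u = \fu(x,y,z) \in U_3$; the integrand $f(wug)$ is nonzero only when $wug \in B_3 K_c$. Conjugating $g$ past $u$ and using the Iwasawa-type normalization of $f$, the question reduces to determining, for the various ranges of $x,y,z$, when $w\fu(x,y,z)$ lies in $B_3 K_c$ (after absorbing $g$). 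Part (1), $W_f(1)\ne 0$, should come out as essentially the assertion that the stable integral of a newform is nonzero — this is a known nonvanishing statement for the essential Whittaker vector, and I would either cite it or rederive it by isolating the contribution of the big cell, where $f(w\fu(x,y,z))$ is a product of characters and the $\psi^{-1}$-integral converges to a nonzero constant because $\psi$ and $f$ are both unramified.

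For part (2), the strategy is a recursion in $m$ and $n$. Using the action of the torus element $\diag(a,1,b)$ on $U_3$ by conjugation and the right $K_c$-invariance of $W_f$, together with Lemma \ref{lem: root killing} which already forces the support to the cone $m,n\ge 0$, I expect to extract a three-term or two-term relation expressing $W_f(\diag(\varpi^m,1,\varpi^{-n}))$ in terms of values with smaller indices, with coefficients built from $t_1 = \mu_1(\varpi)$ and $t_3 = \mu_3(\varpi)$ and powers of $q$. The key input making the $\mu_2$-contribution disappear is that $\mu_2$ is ramified, so any torus factor in the $\diag(a_2,\cdot)$ slot that would feed $\mu_2$ is killed after integrating against the (unramified) $\psi$ over $\fo$ — this is exactly the mechanism of Lemma \ref{lem: root killing}, and it is why only $t_1,t_3$ appear. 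Solving the recursion with the normalization from part (1) gives the closed form $\frac{q^{-(m+n)}}{t_1-t_3}(t_1 t_3)^{-n}(t_1^{m+n+1}-t_3^{m+n+1})$; I would verify the base cases $m=n=0$ and, say, $m=1,n=0$ and $m=0,n=1$ directly, then induct. The symmetry $t_1 \leftrightarrow t_3$ of the right-hand side (the expression is $\frac{t_1^{m+n+1}-t_3^{m+n+1}}{t_1-t_3}$ up to the monomial prefactor, hence visibly symmetric) is a useful consistency check, reflecting the outer automorphism $g \mapsto w\,{}^t g^{-1} w$ which swaps $\mu_1$ and $\mu_3$ and fixes the relevant data.

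The main obstacle I anticipate is the bookkeeping in the Bruhat-cell analysis of $w\fu(x,y,z)\diag(a,1,b)$: there are several cells to consider, and in each one I must decide precisely which sub-range of $(x,y,z)$ contributes, keeping careful track of the Iwasawa decomposition and the resulting character values and volume factors. The statement in Lemma \ref{lem1.1} that $w'(r) = \bigl(\begin{smallmatrix}&&1\\1&&\\ r&1&\end{smallmatrix}\bigr)\notin B_3 K_n$, together with the remark that $\hat\fu(0,0,z)\in B_3 K_n$ does \emph{not} force $z\in\fp^n$, signals that the $z$-variable behaves differently from $x,y$ and is where the subtlety concentrates; getting the $z$-integration right — it should contribute the geometric-series structure responsible for the $(t_1^{m+n+1}-t_3^{m+n+1})/(t_1-t_3)$ factor — is the crux. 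Once the cell contributions are correctly identified, the remaining computation is a routine evaluation of Tate-type local integrals and a finite induction.
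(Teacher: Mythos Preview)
Your plan diverges substantially from the paper's proof, and the key step is missing. The paper does \emph{not} set up a recursion: it carries out a direct, brute-force evaluation of the stable integral. After writing $w\fu(x,y,z)\diag(a,1,b)=\diag(b,1,a)w\fu(a^{-1}x,by,a^{-1}bz)$, it uses the explicit Iwasawa-type identity
\[
w\fu(x,y,z)=\diag\!\bigl(-(z-xy)^{-1},\,1-\tfrac{xy}{z},\,z\bigr)\cdot\fu(\cdots)\cdot\hat\fu\!\bigl(-\tfrac{y}{z-xy},\tfrac{x}{z},\tfrac{1}{z}\bigr)
\]
to test membership in $B_3K_c$ via Lemma~\ref{lem1.1}, splits the integral over $U^k$ into pieces $I_1^k,\dots,I_8^k$ according to the valuations of $x,y,z$, evaluates each piece using Lemma~\ref{lem1.4}, and then sums. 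In particular $W_f(1)$ is computed explicitly as $(t_1t_3^{-1}q^{-1})^c(1-t_1t_3^{-1}q^{-1})$, which is nonzero by the irreducibility assumption $\mu_1\mu_3^{-1}\ne|\cdot|^{\pm1}$; it is not cited.

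Your proposal has a genuine gap at the point where you write ``I expect to extract a three-term or two-term relation.'' You give no mechanism for producing such a recursion. The usual sources---spherical Hecke operators, Casselman--Shalika---do not apply, since $\pi$ is ramified and the invariance group $K_c$ is a nonstandard conjugate of the usual newform group $K_c'$; there is no obvious double-coset operator on $K_c$-fixed vectors that would yield a clean recurrence in $m,n$. Absent that, torus conjugation and $K_c$-invariance alone only reproduce Lemma~\ref{lem: root killing}, not a recursion. You also misidentify the mechanism by which $\mu_2$ drops out: Lemma~\ref{lem: root killing} gives only the support cone $m,n\ge0$; what actually kills the $\mu_2$-dependence in the paper's computation is Lemma~\ref{lem1.4}(2), the Gauss-sum evaluation of $\int_{\fo^\times}\mu_2(1+\varpi^i x)\,dx$, applied inside the piece $I_8^k$. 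Finally, the nonvanishing $W_f(1)\ne0$ cannot simply be cited: the standard essential-vector nonvanishing results (Jacquet, Matringe) are for the $K_c'$-newform, and passing to the conjugate group $K_c=\epsilon_cK_c'\epsilon_c^{-1}$ shifts the argument of the Whittaker function, so the value at $1$ is a genuinely different quantity requiring its own computation.
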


\begin{rmk}{\rm We consider the embedding of $\GL_2$ into $\GL_3$ by $$\bpm a&b\\ c&d \epm \mapsto \bpm a&&b\\ &1&\\ c&&d \epm.$$
Let $\sigma$ be the unramified representation $\Ind_{B_2}^{\GL_2(F)}(\mu_1\otimes \mu_3)$ of $\GL_2(F)$, where $B_2$ is the upper triangular subgroup of $\GL_2(F)$. Let $W^\sigma$ be the unramified Whittaker function of $\sigma$ normalized by $W^\sigma(1)=1$. Then by Shintani's formula for $\GL_2$ \cite{Shin}, the above theorem says that 
$$W_f(\diag(a,1,a^{-1}))=W^\sigma(\diag(a,a^{-1}))W_f(1).$$
This property is what we need in the proof of the holomorphy of the finite part of the adjoint L-function for $\GL_3$ on the region $\Re(s)\ge 1/2$, see Proposition \ref{prop: special case} and Theorem \ref{thm4.2}.  }
\end{rmk}
\begin{rmk}{\rm In \cite{M14}, Miyauchi developed a formula of ramified Whittaker functions for $\GL_n$ (in particular for $\GL_3$), associated with local new forms defined by the group $K_n'$. But Miyauchi's formula does not meet our purpose. It seems that there is no apparent relationship between our formula and Miyauchi's formula, although $K_n$ is conjugate to $K_n'$.  }
\end{rmk}

We will prove Theorem \ref{thm1.3} in the next section. The proof is by brute force computation. It would be interesting to know whether a proof which is more conceptual, and perhaps more amenable to generalization, can be found. 
As a preparation of the proof of the above theorem, we record the following useful lemma
\begin{lem}\label{lem1.4}  In the integrals appearing in the following, we fix the measure $dx$ on $F$ such that $\vol(\fo)=1$, and thus $\vol(\fo^\times)=\vol(\fo)-\vol(\fp)=1-q^{-1}$. 
\begin{enumerate}
\item Let $k$ be an integer, then \begin{equation}\nonumber
\int_{\fp^{k}-\fp^{k+1}}\psi^{-1}(x)dx=\left\{\begin{array}{lll}q^{-k}(1-q^{-1}),  &k\ge 0 \\
-1, & k=-1\\ 0, & k\le -2. \end{array} \right.
\end{equation}
\item Let $i$ be an integer. We have
 \begin{equation}\nonumber
\int_{\fo^\times}\mu_2(1+\varpi^i x)dx=\left\{\begin{array}{lll}1-q^{-1} & i\ge c\\ -q^{-1} & i=c-1, \\ 0, & i<c-1.  \end{array}\right.
\end{equation}
\end{enumerate}
\end{lem}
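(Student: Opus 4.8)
Both parts are short computations in $p$-adic integration, and the plan is to reduce each to two elementary facts: the integral of the trivial character over a compact open subgroup equals its volume, and the integral of a nontrivial character over a compact group vanishes.

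For part (1), I will first evaluate $\int_{\fp^k}\psi^{-1}(x)\,dx$ for every integer $k$ and then take differences. Since $\psi$ is unramified, $\psi^{-1}$ is trivial on $\fo$ and nontrivial on $\fp^{-1}$; hence for $k\ge 0$ we have $\fp^k\subseteq\fo$ and $\int_{\fp^k}\psi^{-1}(x)\,dx=\vol(\fp^k)=q^{-k}$, while for $k\le -1$ we have $\fp^{-1}\subseteq\fp^k$, so $\psi^{-1}$ restricts to a nontrivial character of the compact group $\fp^k$ and the integral vanishes. The identity $\int_{\fp^k-\fp^{k+1}}\psi^{-1}(x)\,dx=\int_{\fp^k}\psi^{-1}(x)\,dx-\int_{\fp^{k+1}}\psi^{-1}(x)\,dx$ then gives $q^{-k}-q^{-k-1}=q^{-k}(1-q^{-1})$ for $k\ge 0$, $0-1=-1$ for $k=-1$, and $0-0=0$ for $k\le -2$, as claimed.

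For part (2), I will reduce to the single quantity $A(j):=\int_{\fo}\mu_2(1+\varpi^j x)\,dx$. Writing $\fo^\times=\fo-\fp$ and substituting $x=\varpi x'$ in the integral over $\fp$ gives $\int_{\fo^\times}\mu_2(1+\varpi^i x)\,dx=A(i)-q^{-1}A(i+1)$, so it suffices to show $A(j)=1$ for $j\ge c$ and $A(j)=0$ for $j<c$. For $j\ge c$ this is immediate, since then $1+\varpi^j x\in 1+\fp^c$ and $\mu_2(1+\varpi^j x)=1$, whence $A(j)=\vol(\fo)=1$. For $1\le j<c$, the map $x\mapsto y=1+\varpi^j x$ is a bijection $\fo\to 1+\fp^j$ with $dx=q^j\,dy$; since $|y|=1$ on $1+\fp^j$, the additive measure restricts to a Haar measure on the compact multiplicative group $1+\fp^j$, on which $\mu_2$ is a nontrivial character, so $A(j)=q^j\int_{1+\fp^j}\mu_2(y)\,dy=0$. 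For $j\le 0$, the image of $\fo$ under $x\mapsto 1+\varpi^j x$ is $\fo$ (if $j=0$) or $\fp^j$ (if $j<0$), and in either case the relevant integral $\int_{\fo}\mu_2(y)\,dy$, respectively $\int_{\fp^j}\mu_2(y)\,dy$, splits as a sum over the annuli $\varpi^n\fo^\times$, on each of which $\int_{\varpi^n\fo^\times}\mu_2(y)\,dy=q^{-n}\mu_2(\varpi)^n\int_{\fo^\times}\mu_2(u)\,du=0$ because $\mu_2$ is ramified (i.e.\ $c\ge 1$); hence $A(j)=0$. Inserting these values into $A(i)-q^{-1}A(i+1)$ yields $1-q^{-1}$ for $i\ge c$, $0-q^{-1}=-q^{-1}$ for $i=c-1$ (using $A(c-1)=0$ since $c-1<c$, and $A(c)=1$), and $0-0=0$ for $i\le c-2$.

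There is no serious obstacle here. The only step requiring care is the bookkeeping in part (2): tracking the Jacobian factor $|\varpi^{-j}|=q^j$ in the substitution $y=1+\varpi^j x$ and correctly identifying its image, which is why the boundary value $j=0$ must be treated separately from $j\ge 1$. The one genuine subtlety is the (harmless) convergence of $\int_{\fo}\mu_2(y)\,dy$ when $\mu_2$ is not unitary; this is dispatched either by the termwise-vanishing annulus sum above, or equivalently by observing that $(1-q^{-1}\mu_2(\varpi))\int_{\fo}\mu_2(y)\,dy=\int_{\fo^\times}\mu_2(y)\,dy=0$.
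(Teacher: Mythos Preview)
Your proposal is correct and follows essentially the same approach as the paper. Part (1) is identical; for part (2) the paper likewise reduces to character orthogonality and, at the boundary, to the vanishing of $\int_{\fo}\mu_2(y)\,dy$ via the annulus decomposition, the only cosmetic difference being that the paper handles $i>0$, $i<0$, $i=0$ directly (for $i<0$ via the factorization $1+\varpi^i x=\varpi^i(x+\varpi^{-i})$, noting that $x\mapsto x+\varpi^{-i}$ permutes $\fo^\times$) rather than passing through your auxiliary quantity $A(j)$.
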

\begin{proof}
The first statement follows from 
$$\int_{\fp^k}\psi^{-1}(x)dx=\left\{\begin{array}{lll}q^{-k},& k\ge 0, \\ 0, & k<0.\end{array} \right.$$
The second statement can be proved in the same manner if $i>0$. We consider the case $i\le 0$. If $i<0$, we have 
$$1+\varpi^ix=\varpi^i(x+\varpi^{-i}).$$
The set $\wpair{x+\varpi^{-i}:x\in \fo^\times}$ is exactly $\fo^\times$. Thus $\int_{\fo^\times}\mu_2(1+\varpi^ix)du=0$.  If $i=0$, we have 
$$\int_{\fo^\times}\mu_2(1+x)dx=\int_{\fo}\mu_2(1+x)dx-\int_{\fp}\mu_2(1+x)dx=\int_{\fo}\mu_2(x)dx-\int_{\fp}\mu_2(1+x)dx.$$
Here $\int_{\fo}\mu_2(x)dx$ is understood as $\sum_{k\ge 0}\int_{\varpi^k\fo^\times}\mu_2(x)dx$, which is convergent to 0 since $\mu_2$ is ramified. Thus $\int_{\fo^\times}\mu_2(1+x)dx=-\int_{\fp}\mu_2(1+x)dx$, which is $-q^{-1}$ if $c=1$ and $0$ if $c>1$. The assertion follows.
\end{proof}

\subsection{Proof of Theorem \ref{thm1.3}}
We are going to compute 
$$W_f(\diag(a,1,b))=\int_{U_3}^{st} f(wu\diag(a,1,b))\psi^{-1}(u)du, $$
with $|a|=q^{-m}, |b|=q^n$. We first fix the measure $du$. For $u=\fu(x,y,z),x,y,z\in F$, we take $du=dxdydz$, where $dx,dy,dz$ are additive measures on $F$ such that $\Vol(\fo)=1$.

 We can write the above integral as
$$W_f(\diag(a,1,b))=\int_{F^3}^{st} f(w\fu(x,y,z)\diag(a,1,b))\psi^{-1}(x+y)dxdydz. $$
Since $$w\fu(x,y,z)\diag(a,1,b)=\diag( b,1,a)w\fu(a^{-1} x, by, a^{-1}bz),$$
we get 
\begin{equation}\nonumber
W_f(\diag(a,1,b))=\mu_1(b)\mu_3(a)|b/a|\int_{F^3}^{st} f(w\fu(a^{-1}x,by,a^{-1}bz))\psi^{-1}(x+y)dxdydz. \nonumber
\end{equation}
Let $U^k=\wpair{\fu(x,y,z): x,y\in \fp^{-k}, z\in \fp^{-2k}}$. Consider the integral 
\begin{equation}\label{eq1.2}
I^k=I^k(m,n)=\int_{U^k} f(w\fu(a^{-1}x,by,a^{-1}bz))\psi^{-1}(x+y)dxdydz.
\end{equation}
Since $\diag(\fo^\times,1,\fo^\times)\subset K_c$, the right hand side of the above integral only depends on $m,n,k$, which justifies the notation $I^k(m,n)$. We then have
 \begin{equation}\label{eq1.3}W_f(\diag(a,1,b))=q^{n+m}t_1^{-n}t_3^m\lim_{k\ra \infty} I^k.\end{equation}
To compute $I^k$, we will frequently use the following identity
\begin{align}\label{eq1.4}
w\fu(x,y,z)=&\diag(-1/(z-xy),1-(xy)/z ,z) \\
&\cdot \fu\left(\frac{x(z-xy)}{z}, \frac{zy}{z-xy},-z+xy\right) \hat \fu\left(-\frac{y}{z-xy}, \frac{x}{z},\frac{1}{z}\right), \nonumber
\end{align}
if $z\ne 0, z-xy\ne 0$. 
Take $x=y=0,z=\varpi^{-c}$ in Eq.(\ref{eq1.4}), we get 
$$w\fu(0,0,\varpi^{-c})=\diag(-\varpi^c,1,\varpi^{-c})\fu(0,0,-\varpi^{-c})\hat \fu(0,0,\varpi^c).$$
Since $\fu(0,0,\varpi^{-c}),\hat \fu(0,0,\varpi^c)\in K_c$, we can get 
\begin{equation}\label{eq1.5}f(w)=(t_1t_3^{-1}q^{-2})^c\end{equation}
from the above equation.

We now start to compute the integral $I^k$. In the following computation, $k$ is sufficiently large. We have 
\begin{align}
I^k&=\int_{\fp^{-2k}\times \fp^{-k}} \int_{\fp^{-k}} f(w\fu(a^{-1}x,0,a^{-1}b(z-xy) \fu(0,yb,0))) \psi^{-1}(x+y)dy dxdz \label{eq: I^k}\\
&=\int_{\fp^{-2k}\times \fp^{-k}} \int_{\fp^n} f(w\fu(a^{-1}x,0,a^{-1}b(z-xy) \fu(0,yb,0))) \psi^{-1}(x+y)dy dxdz  \nonumber \\
&+\int_{\fp^{-2k}\times \fp^{-k}}\int_{\fp^{-k}-\fp^n}f(w\fu(a^{-1}x,0,a^{-1}b(z-xy) \fu(0,yb,0))) \psi^{-1}(x+y)dy dxdz \nonumber \\
:&=I_1^k+I_2^k. \nonumber
\end{align}
Since $yb\in \fo,$ we have $ \fu(0,yb,0)\in K_c$, for $y\in\fp^{n}$. By changing variable on $z$,  we have
\begin{align*}
I_1^k&=\int_{\fp^{-2k}\times \fp^{-k}} \int_{\fp^n} f(w\fu(a^{-1}x,0,a^{-1}b(z-xy) \fu(0,by,0))) \psi^{-1}(x+y)dy dxdz\\
&=\int_{\fp^{-2k}\times \fp^{-k}} \int_{\fp^n} f(w\fu(a^{-1}x,0,a^{-1}bz )\fu(0,by,0)) \psi^{-1}(x+y)dy dxdz\\
&=q^{-n}\int_{\fp^{-2k}\times \fp^{-k}} f(w\fu(a^{-1}x,0,a^{-1}b z))\psi^{-1}(x)dxdz\\
&=q^{-n}\int_{\fp^{-2k}} \left(\int_{\fp^m} f(w\fu(a^{-1}x,0,a^{-1}b z))dx\right)dz\\
&+q^{-n}\int_{\fp^{-2k}} \left(\int_{\fp^{-k}-\fp^m} f(w\fu(a^{-1}x,0,a^{-1}b z))\psi^{-1}(x)dx\right)dz\\
:&=I_3^k+I_4^k.
\end{align*}
Since for $x\in \fp^m, a^{-1}x\in \fo$, we get 
\begin{align*}
I_3^k&=q^{-m-n}\int_{\fp^{-2k}}f(w\fu(0,0,a^{-1}bz))dz\\
&=q^{-m-n}\left(\int_{\fp^{m+n-c}}f(w\fu(0,0,a^{-1}bz))dz \right.\\
&\left.+ \sum_{i= 1}^{2k+m+n-c}\int_{\fp^{m+n-c-i}-\fp^{m+n-c-i+1}}f(w\fu(0,0,a^{-1}bz))dz \right).
\end{align*}
Since $f$ is $K_c$-invariant, by Eq.(\ref{eq1.5}), we have 
$$\int_{\fp^{m+n-c}}f(wu(0,0,a^{-1}bz))dz=q^{-m-n}(t_1t_3^{-1}q^{-1})^c. $$
Note that for $z\in \fp^{m+n-c-i}-\fp^{m+n-c-i+1}$, we have $ab^{-1}z^{-1}\in \varpi^{c+i}\fo^\times$. By Eq.(\ref{eq1.4}), we have
$$w\fu(0,0,a^{-1}bz)=\diag(-ab^{-1}z^{-1},1,a^{-1}bz) \fu(0,0,-a^{-1}bz)\hat \fu(0,0,ab^{-1}z^{-1}).$$
 For $i\ge 1$, we have $ \hat \fu(0,0,ab^{-1}z^{-1})\in K_c$ and 
$$f(w\fu(0,0,a^{-1}bz))=\mu_1(ab^{-1}z^{-1})\mu_3(a^{-1}bz^{-1})|ab^{-1}z^{-1}|^2=(t_1t_3^{-1}q^{-2})^{c+i}. $$
Thus 
\begin{equation}\label{eq1.6}I_3^k=q^{-2(m+n)}\left((t_1t_3^{-1}q^{-1})^c+(1-q^{-1})\sum_{i=1}^{2k+m+n-c}(t_1t_3^{-1}q^{-1})^{c+i}\right).\end{equation}

We next consider $I_4^k.$
We have
\begin{align*}
I_4^k&=q^{-n}\int_{\fp^{m+n-c}}\left(\int_{\fp^{-k}-\fp^{m}}f(w\fu(a^{-1}x,0,a^{-1}bz))\psi^{-1}(x)dx\right)dz\\
&+q^{-n}\int_{\fp^{-2k}-\fp^{m+n-c}}\left(\int_{\fp^{-k}-\fp^{m}}f(w\fu(a^{-1}x,0,a^{-1}bz))\psi^{-1}(x)dx\right)dz\\
:&=I_5^k+I_6^k.
\end{align*}
For $z\in \fp^{m+n-c}$, we have $a^{-1}bz\in \fp^{-c}$ and thus $\fu(0,0,a^{-1}bz)\in K_c$. We get 
\begin{align*}
I_5^k=q^{-m-2n+c} \int_{\fp^{-k}-\fp^{m}}f(w\fu(a^{-1}x,0,0))\psi^{-1}(x)dx.
\end{align*}
We have the identity 
$$w\fu(a^{-1}x,0,0)=\diag(1,-ax^{-1},a^{-1}x)\fu(0,-a^{-1}x,1)w'(ax^{-1}),$$
see Lemma \ref{lem1.1}(2) for the definition of $w'(r)$. By Lemma \ref{lem1.1}(2), we get $f(w\fu(a^{-1}x,0,0))=0$ and thus $I_5^k=0$.

We next consider $I_6^k$. We have
\begin{align*}
I_6^k&=q^{-n}\sum_{i=1}^{2k+m+n-c}\int_{\fp^{m+n-c-i}-\fp^{m+n-c-i+1}}\\
&\cdot \left(\sum_{j=1}^{k+m}\int_{\fp^{m-j}-\fp^{m-j+1}}f(w\fu(a^{-1}x,0,a^{-1}bz))\psi^{-1}(x)dx\right)dz
\end{align*}
By Eq.(\ref{eq1.4}) again, we have 
$$w\fu(a^{-1}x,0,a^{-1}bz)=\diag(-ab^{-1}z^{-1},1,a^{-1}bz)\fu(a^{-1}x,0,-a^{-1}bz)\hat \fu(0,b^{-1}xz^{-1},ab^{-1}z^{-1} ).$$
For $z\in \fp^{m+n-c-i}-\fp^{m+n-c-i+1},x\in \fp^{m-j}-\fp^{m-j+1}$, we have $ab^{-1}z^{-1}\in \varpi^{c+i}\fo^\times$ and $b^{-1}xz^{-1}\in \varpi^{c+i-j}$. Since $i\ge 1$, we get $\hat \fu(0,0,ab^{-1}z^{-1})\in K_c$. Thus 
$$f( w\fu(a^{-1}x,0,a^{-1}bz))=(t_1t_3^{-1}q^{-2})^{c+i} f(\hat \fu(0,b^{-1}xz^{-1},0)).$$
By Lemma \ref{lem1.1} (1), if $i<j$, we get $f(w\fu(a^{-1}x,0,a^{-1}bz) )=0.$ If $i\ge j\ge 1$, we have 
$$f(w\fu(a^{-1}x,0,a^{-1}bz) )=(t_1t_3^{-1}q^{-2})^{c+i}.$$
Thus, we get 
\begin{align*}
    I_6^k&=q^{-n}\sum_{i=1}^{2k+m+n-c}\int_{\fp^{m+n-c-i}-\fp^{m+n-c-i+1}}(t_1t_3^{-1}q^{-2})^{c+i}\\
&\cdot \left(\sum_{j=1}^{\min\wpair{i,k+m}}\int_{\fp^{m-j}-\fp^{m-j+1}}\psi^{-1}(x)dx\right)dz.
\end{align*}
By Lemma \ref{lem1.4}(1), we have $\int_{\fp^{m-j}-\fp^{m-j+1}}\psi^{-1}(x)=0 $ if $j>m+1$. Then in the above expression, $j$ satisfies the condition $1\le j\le \min \wpair{i,m+1}$. The double sum in the expression of $I_6^k$ then can be divided into 3 parts:
$$\sum_{i=1}^m \sum_{j=1}^i, \quad \sum_{i=m+1}^{2k+m+n-c} \sum_{j=1}^m, \quad \sum_{i=m+1}^{2k+m+n-c}\sum_{j=m+1}^{m+1}.$$
Thus we get 
\begin{align}
I_4^k=I_6^k&=q^{-2m-2n}(1-q^{-1})^2\sum_{i=1}^m(t_1t_3^{-1}q^{-1})^{c+i}\sum_{j=1}^i q^j \label{eq1.7}\\
&+q^{-2m-2n}(1-q^{-1})^2\sum_{i=m+1}^{2k+m+n-c}(t_1t_3^{-1}q^{-1})^{c+i}\sum_{j=1}^m q^j \nonumber\\
&-q^{-m-2n}(1-q^{-1})\sum_{i=m+1}^{2k+m+n-c}(t_1t_3^{-1}q^{-1})^{c+i} \nonumber\\
&=q^{-2m-2n}(1-q^{-1})\sum_{i=1}^m (t_1t_3^{-1}q^{-1})^{c+i}(q^i-1) \nonumber\\
&-q^{-2m-2n}(1-q^{-1}) \sum_{i=m+1}^{2k+m+n-c}(t_1t_3^{-1}q^{-1})^{c+i}. \nonumber
\end{align}
From the above calculation, Eq.(\ref{eq1.6}) and Eq.(\ref{eq1.7}), we get 
\begin{align}\label{eq1.8}
I_1^k&=I_3^k+I_4^k \\
&=q^{-2m-2n}(1-q^{-1})\sum_{i=1}^m (t_1t_3^{-1}q^{-1})^{c+i}(q^i-1) \nonumber \\
&+q^{-2m-2n}(t_1t_3^{-1}q^{-1})^c \nonumber\\
&+q^{-2m-2n}(1-q^{-1}) \sum_{i=1}^{m}(t_1t_3^{-1}q^{-1})^{c+i} \nonumber\\
&=q^{-2m-2n}(t_1t_3q^{-1})^c+q^{-2m-2n}(1-q^{-1})\sum_{i=1}^m(t_1t_3^{-1}q^{-1})^{c+i}q^i, \nonumber
\end{align}
which is independent of $k$ as long as $k$ is sufficiently large. To emphasize the dependence on $m,n$, we will write the above expression as $I_1(m,n)$. 

We next consider 
$$I_2^k=\int_{\fp^{-2k}\times \fp^{-k}}\int_{\fp^{-k}-\fp^n}f(w\fu(a^{-1}x,0,a^{-1}b(z-xy) )\fu(0,by,0)) \psi^{-1}(x+y)dy dxdz.$$
We have 
\begin{align*}
I_2^k&=\int_{\fp^{-2k}\times \fp^{-k}}\int_{\fp^{-k}-\fp^n}f(w\fu(a^{-1}x,by,a^{-1}bz) ) \psi^{-1}(x+y)dy dxdz\\
&=\int_{\fp^{-2k}\times (\fp^{-k}-\fp^{n})}\int_{\fp^{-k}}f(w\fu(a^{-1}x,by,a^{-1}bz) ) \psi^{-1}(x+y)dx dydz\\
&=\int_{\fp^{-2k}\times (\fp^{-k}-\fp^{n})}\int_{\fp^m}f(w\fu(a^{-1}x,by,a^{-1}bz) ) \psi^{-1}(x+y)dx dydz\\
&+\int_{\fp^{-2k}\times (\fp^{-k}-\fp^{n})}\int_{\fp^{-k}-\fp^m}f(w\fu(a^{-1}x,by,a^{-1}bz) ) \psi^{-1}(x+y)dx dydz\\
:&=I_7^k+I_8^k.
\end{align*}
The calculation of $I_7^k$ is the same as that of $I_4^k$, Eq.(\ref{eq1.7}). We only write the result here
\begin{align}
I_{7}^k&=q^{-2m-2n}(1-q^{-1})\sum_{i=1}^n(t_1t_3^{-1}q^{-1})^{c+i}(q^i-1) \label{eq1.9}\\
&-q^{-2m-2n}(1-q^{-1})\sum_{i=n+1}^{2k+m+n-c} (t_1t_3^{-1}q^{-1})^{c+i}. \nonumber
\end{align}

We next compute 
$$I_8^{k}=\int_{\fp^{-2k}\times (\fp^{-k}-\fp^{n})}\int_{\fp^{-k}-\fp^m}f(w\fu(a^{-1}x,by,a^{-1}bz) ) \psi^{-1}(x+y)dx dydz.$$
Using Eq.(\ref{eq1.4}) we get 
\begin{align*}
I_8^k&=\sum_{l=-\infty}^{2k+m+n-c}\int_{\fp^{m+n-c-l}-\fp^{m+n-c-l+1}} \sum_{j=1}^{k+n}\int_{\fp^{n-j}-\fp^{n-j+1}} \sum_{i=1}^{k+m}\int_{\fp^{m-i}-\fp^{m-i+1}}\\
& \cdot \psi^{-1}(x+y)\cdot \mu_1(ab^{-1}(z-xy)^{-1})\mu_2(1-(xy)/z)\mu_3(a^{-1}bz)|ab^{-1}(z-xy)^{-1}|\cdot |a^{-1}bz|^{-1}\\
&\cdot f\left(\hat \fu\left(\frac{-ay}{z-xy},\frac{x}{bz},\frac{a}{bz} \right) \right)dxdydz.
\end{align*}
For $z\in \fp^{m+n-c-l}-\fp^{m+n-c-l+1}, y\in \fp^{n-j}-\fp^{n-j+1}, x\in \fp^{m-i}-\fp^{m-i+1}$, we have 
$$ab^{-1}z^{-1}\in \varpi^{c+l}\fo^\times, b^{-1}xz^{-1}\in \varpi^{c+l-i}, (xy)/z\in \varpi^{c+l-i-j}\fo^\times.$$
For fixed $x,y$, to make sure the integral of $\mu_2(1-(xy)/z)$ with respect to $z$ is non-zero, we need $l\ge i+j-1$, see Lemma \ref{lem1.4} (2) (one can check that the other quantities in $I_8^k$ involving $z$ only depends on the absolute value of $z$). If $l\ge i+j-1$, we have $ay/(z-xy)\in \fp^c, ab^{-1}z^{-1}\in \fp^c, b^{-1}xz^{-1}\in \fp^c$. Moreover, we have 
$$ab^{-1}(z-xy)^{-1}\in \varpi^{c+l}\fo^\times.$$
Thus we get 
\begin{align}\label{eq1.10}
 I_8^k
&= \sum_{j=1}^{k+n}\int_{\fp^{n-j}-\fp^{n-j+1}}\psi^{-1}(y)dy \sum_{i=1}^{k+m}\int_{\fp^{m-i}-\fp^{m-i+1}}\psi^{-1}(x)dx\\
&\cdot \left( \sum_{l=i+j}^{2k+m+n-c}q^{-(m+n)} (1-q^{-1})(t_1t_3^{-1}q^{-1})^{c+l} -q^{-1}q^{-m-n}(t_1t_3^{-1}q^{-1})^{c+i+j-1} \right). \nonumber
\end{align}
Similar to the calculation of $I_6^k$, the right side of  the double sum on $i,j$ in Eq.(\ref{eq1.10}) can be divided into the following 4 parts
$$\sum_{i=1}^m\sum_{j=1}^n, \quad \sum_{i=1}^m \sum_{j=n+1}, \quad \sum_{i=m+1}\sum_{j=1}^n, \quad \sum_{i=m+1}\sum_{j=n+1}.$$
The term corresponding to $\sum_{i=1}^{m}\sum_{j=1}^n$ is 
\begin{align}\label{eq1.11}
&q^{-2m-2n}(1-q^{-1})^2\sum_{i=1}^{m}\sum_{j=1}^n q^{i+j}\left( \sum_{l=i+j}^{2k+m+n-c} (1-q^{-1})(t_1t_3^{-1}q^{-1})^{c+l} -q^{-1}(t_1t_3^{-1}q^{-1})^{c+i+j-1} \right).
\end{align}
The second term is 
\begin{align}\label{eq1.12}
-q^{-2m-2n}(1-q^{-1})\sum_{i=1}^mq^{i+n}\left( \sum_{l=i+n+1}^{2k+m+n-c} (1-q^{-1})(t_1t_3^{-1}q^{-1})^{c+l} -q^{-1}(t_1t_3^{-1}q^{-1})^{c+i+n} \right).
\end{align}
The third term is 
\begin{align}\label{eq1.13}
-q^{-2m-2n}(1-q^{-1})\sum_{j=1}^nq^{i+m}\left( \sum_{l=i+m+1}^{2k+m+n-c} (1-q^{-1})(t_1t_3^{-1}q^{-1})^{c+l} -q^{-1}(t_1t_3^{-1}q^{-1})^{c+i+m} \right).
\end{align}
The last term is 
\begin{align}\label{eq1.14}
 q^{-m-n}\left( \sum_{l=m+n+2}^{2k+m+n-c} (1-q^{-1})(t_1t_3^{-1}q^{-1})^{c+l} -q^{-1}(t_1t_3^{-1}q^{-1})^{c+m+n+1} \right).
\end{align}

We consider the integral $I_2^k=I_7^k+I_8^k$. From Eq.(\ref{eq1.9}-\ref{eq1.14}), the coefficient of the term involes $k$ is 
\begin{align*}
&q^{-2m-2n}(1-q^{-1})\\
&\cdot \left((1-q^{-1})^2 \sum_i^m\sum_j^n q^{i+j}-(1-q^{-1})\sum_{i=1}^mq^{i+n}-(1-q^{-1})\sum_{j=1}^nq^{j+m}+q^{m+n}-1\right),
\end{align*}
which is zero. Thus $I_2^k$ is in fact independent on $k$ as long as $k$ is sufficiently large. We then get 
\begin{align}
I_2^k&=I_7^k+I_8^k \label{eq: I2}\\
&=q^{-2m-2n}(1-q^{-1})\sum_{i=1}^n(t_1t_3^{-1}q^{-1})^{c+i}(q^i-1) \nonumber\\
&+q^{-2m-2n}(1-q^{-1}) \sum_{i=1}^{n}(t_1t_3^{-1}q^{-1})^{c+i} \nonumber \\
&-q^{-2m-2n}(1-q^{-1})^2\sum_{i=1}^{m}\sum_{j=1}^n q^{i+j}\left( \sum_{l=1}^{i+j-1} (1-q^{-1})(t_1t_3^{-1}q^{-1})^{c+l} +q^{-1}(t_1t_3^{-1}q^{-1})^{c+i+j-1} \right) \nonumber\\
&-q^{-2m-2n}(1-q^{-1})\sum_{i=1}^mq^{i+n}\left(- \sum_{l=1}^{i+n} (1-q^{-1})(t_1t_3^{-1}q^{-1})^{c+l} -q^{-1}(t_1t_3^{-1}q^{-1})^{c+i+n} \right) \nonumber \\
&-q^{-2m-2n}(1-q^{-1})\sum_{i=1}^nq^{i+m}\left(- \sum_{l=1}^{i+m} (1-q^{-1})(t_1t_3^{-1}q^{-1})^{c+l} -q^{-1}(t_1t_3^{-1}q^{-1})^{c+i+m} \right) \nonumber \\
&+q^{-m-n}\left( -\sum_{l=1}^{m+n+1} (1-q^{-1})(t_1t_3^{-1}q^{-1})^{c+l} -q^{-1}(t_1t_3^{-1}q^{-1})^{c+m+n+1} \right). \nonumber
\end{align}
To emphasize the dependence on $m,n$, we denote the above expression by $I_2(m,n)$. Let $I(m,n)=\lim_{k\ra \infty}I^k(m,n)$. By Eq.(\ref{eq: I^k}), we have $$I(m,n)=I_1(m,n)+I_2(m,n),$$
where the right hand side was given by Eq.(\ref{eq1.8}) and Eq.(\ref{eq: I2}).

A simple calculation shows that
$$W_f(1)=I(0,0)=I_1(0,0)+I_2(0,0)=(t_1t_3^{-1}q^{-1})^c(1-t_1t_3^{-1}q^{-1}).$$
Since $\mu_1\mu_3^{-1}\ne |~|^{\pm}$ (irreduciblity condition), we have $t_1t_3^{-1}q^{-1}\ne 1$. Thus $W_f(1)\ne 0$.

By a careful symbolic calculation, one can get that 
$$I(m,n)=q^{-2m-2n}\frac{1-(qX)^{m+n+1}}{1-qX}W_f(1),$$
where $X=t_1t_3^{-1}q^{-1}$.
Now Theorem \ref{thm1.3} follows from Eq.(\ref{eq1.3}).

\section{Calculation of Ginzburg's local zeta integral in a special case}\label{computation of the local zeta integral in a special case}
In this section, as in Section \ref{notion on F}, we still let $F$ be non-archimedean local field, $\fo$ be the ring of integers of $F$, $\varpi$ be a uniformizer of $F$, and $q$ be the number of the residue field of $F$. We also fix a nonzero square $\rho,$ and identify the group $H_\rho$ with $\SL_3.$ %We define $h_\rho (F^\times)^2 \to T_\rho(F)$ as in section \ref{def of h_rho torus}.

As in section \ref{notation I(mu)}, we now consider the induced representation  $\pi=\Ind_{B_3}^{\GL_3(F)}(\mu_1\otimes\mu_2\otimes \mu_3)$ of $\GL_3(F)$ with, $\mu_1,\mu_3$ unramified, and $\mu_2$ ramified with conductor $c$. We further require that $\mu_1\mu_3=1$. Let $t_1=\mu_1(\varpi)$. Let $W_c\in \CW(\pi,\psi)$ be the Whittaker function defined by \eqref{eq1.1}. We know that $W_c$ is right invariant under the group $K_c\subset \GL_3(F)$. By Lemma \ref{lem: root killing} and Theorem \ref{thm1.3} we have $W_c(1)\ne 0$ and 
\begin{equation}\label{eq3.1}
W_c(\diag(\varpi^m,1,\varpi^{-m}))=\left\{\begin{array}{lll}\frac{q^{-2m}}{t_1-t_1^{-1}}(t_1^{2m+1}-t_1^{-(2m+1)})W_c(1),& m\ge 0,\\ 0, & m<0. \end{array}\right.
\end{equation}

Let $\chi$ be an unramified quasi-character of $F^\times$, $s\in \BC$. Recall that, by the exact sequence (\ref{eq2.2}), the induced representation $I(s,\chi)$ has a subspace $\ind_{N_{2,\rho}\cdot \GL_1}^{H_\rho}(\chi'_s)$. 

For a subset $A\subset F, $ we denote $\ch_A$ the characteristic function of $A$. Consider the following function $f_s^c$ on $H_\rho(F)\cong \SL_3(F)$:
$$f_s^c(g)=\left\{\begin{array}{lll}0,& \textrm{ if } g\notin B_3K_c\cap H_\rho (F),\\ \chi'_s(nh(a,1))\ch_{1+\fp^c}(b)\ch_\fo(x),  & \textrm{ if } g=nh(a,1)n(x)h(1,b) k,   n\in N_{2,\rho}, k\in K_c\cap H_\rho(F), \end{array}\right.$$
where $$n(x)=\begin{pmatrix}1& x& 0\\ &1&0\\ &&1 \end{pmatrix}\in H_\rho(F). $$
Note that for $n\in N_{2,\rho}$, if $nh(a,1)n(x)h(1,b)\in B_3\cap K_c\cap \SL_3(F)$, we can get $a\in \fo^\times, b\in 1+\fp^c, x\in \fo$, and hence $\chi'_s(nh(a,1))\ch_{1+\fp^c}(b)\ch_\fo(x)=1$. Thus $f_s^c$ is well-defined. Note that $f_s^c$ is right $K_c$-invariant. By definition, $f_s^c\in \ind_{N_{2,\rho}\cdot \GL_1}^{H_\rho(F)}(\chi'_s)$ and hence defines an element $\tilde f_s^c\in I(s,\chi)$ by the exact sequence (\ref{eq2.2}).
\begin{prop}\label{prop: special case}
We have $$Z(W_c,\tilde f_s^c)=D_c\frac{1+\chi(\varpi)q^{-(3s-1)}}{(1-t_1^2\chi(\varpi)q^{-(3s-1)})(1-t_1^{-2}\chi(\varpi)q^{-(3s-1)})},$$
where  $D_c=\vol(K_c)\vol(1+\fp^c)W_c(1)$.
\end{prop}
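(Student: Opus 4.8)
The plan is to evaluate the zeta integral directly by unfolding along the Iwasawa decomposition, using the Whittaker formula \eqref{eq3.1} as the only nontrivial input. Recall from \S\ref{def of tilde f s} that $Z(W_c,\tilde f_s^c)=\int_{N_{2,\rho}\setminus H_\rho(F)}W_c(g)f_s^c(g)\,dg$, and identify $H_\rho(F)\cong\SL_3(F)$ (as $\rho$ is a square). First I would factor the Haar measure exactly as in the proof of Lemma \ref{lemnonvanishing}, taking coset representatives $h(a,1)n(x)h(1,b)k$ with $a,b\in F^\times$, $x\in F$, $k\in K_c\cap H_\rho(F)$. Using the support of $f_s^c$ (which vanishes off $(B_3\cap H_\rho(F))(K_c\cap H_\rho(F))$), its transformation law, and the right $K_c$-invariance of $W_c$, and carrying out the $x$-integral via $\int_F\psi(ax)\,\ch_\fo(x)\,dx=\ch_\fo(a)$ (self-duality of $\ch_\fo$ under the unramified $\psi$ normalized by $\vol(\fo)=1$), the zeta integral collapses to
\[
Z(W_c,\tilde f_s^c)=c_1\int_{(F^\times)^2}W_c(h(a,b))\,\ch_\fo(a)\,\ch_{1+\fp^c}(b)\,\chi_s(h(1,a))\,\frac{d^*a\,d^*b}{|a^3b^2|},
\]
where $c_1$ is an explicit product of volumes of compact subgroups.

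Next I would dispatch the two remaining integrals. For $b\in 1+\fp^c$ one checks that $h(1,b)=\diag(1,b,b^{-1})\in K_c$, so the right $K_c$-invariance of $W_c$ gives $W_c(h(a,b))=W_c(h(a,1))$ while $|b|=1$; thus the $b$-integral contributes only a factor $\vol(1+\fp^c)$. In the $a$-integral $\ch_\fo(a)$ forces $a\in\fo\setminus\{0\}$ (consistent with Lemma \ref{lem: root killing}); writing $a=\varpi^m u$ with $m\ge 0$ and $u\in\fo^\times$, and using $h(u,1)=\diag(u,1,u^{-1})\in K_c$ together with $\chi$ unramified, one gets $W_c(h(a,1))=W_c(\diag(\varpi^m,1,\varpi^{-m}))$ and $\chi_s(h(1,a))\,|a^3|^{-1}=\bigl(\chi(\varpi)q^{3-3s}\bigr)^m$. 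Substituting \eqref{eq3.1}, which contributes the extra $q^{-2m}$, and putting $Y:=\chi(\varpi)q^{-(3s-1)}$, this leaves
\[
Z(W_c,\tilde f_s^c)=c_2\,\vol(1+\fp^c)\,\frac{W_c(1)}{t_1-t_1^{-1}}\sum_{m\ge 0}\bigl(t_1^{2m+1}-t_1^{-(2m+1)}\bigr)Y^m,
\]
where $c_2$ is again a product of volumes of compact sets.

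The last step is the geometric summation
\[
\sum_{m\ge 0}\bigl(t_1^{2m+1}-t_1^{-(2m+1)}\bigr)Y^m=\frac{t_1}{1-t_1^{2}Y}-\frac{t_1^{-1}}{1-t_1^{-2}Y}=\frac{(t_1-t_1^{-1})(1+Y)}{(1-t_1^{2}Y)(1-t_1^{-2}Y)},
\]
which cancels the $t_1-t_1^{-1}$ and yields the asserted formula with $D_c=c_2\,\vol(1+\fp^c)\,W_c(1)$; a final accounting of the compact-group volumes (as in Lemma \ref{lemnonvanishing}) identifies $c_2=\vol(K_c)$. All the manipulations are valid for $\Re(s)\gg 0$, and since both sides are rational functions of $q^{-s}$ the identity then holds identically by meromorphic continuation.

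I expect the only delicate step to be the volume bookkeeping in the Iwasawa reduction---arranging that the accumulated compact-group volumes are exactly $\vol(K_c)\vol(1+\fp^c)W_c(1)$---which I would handle by following the proof of Lemma \ref{lemnonvanishing} closely. The rest is routine: the substantive content is Theorem \ref{thm1.3}, already established, and the clean numerator $1+\chi(\varpi)q^{-(3s-1)}$ comes out precisely because of the cancellation of $t_1-t_1^{-1}$ in the summation.
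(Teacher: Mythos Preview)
Your proposal is correct and follows essentially the same approach as the paper. The only minor difference is that the paper absorbs $n(x)h(1,b)$ directly into $K_c$ via right $K_c$-invariance of $W_c$ (since $n(x),h(1,b)\in K_c$ for $x\in\fo$ and $b\in 1+\fp^c$), rather than commuting $n(x)$ past $h(a,1)$ and invoking the Fourier self-duality of $\ch_\fo$; the resulting one-variable Mellin integral and the geometric summation are identical.
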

\begin{proof}
We have
$$Z(W_c,\tilde f_s^c)=\int_{N_{2,\rho}\setminus H_\rho(F)}W_c(g)f_s^c(g)dg.$$
Since $\supp(f_s^c)\subset B_3K_c$ and both $W_c$ and $f_s^c$ are left $N_{2,\rho}$-invariant and right $K_c$-invariant, we get 
\begin{align*}
Z(W_c, \tilde f_s^c)&=\vol(K_c)\int_{F^\times} \int_{1+\fp^c}\int_{\fo}W_c(h(a,1)n(x)h(1,b))\chi'_s(h(a,1))|a|^{-3}dx d^*b d^*a\\
&=\vol(K_c)\vol(1+\fp^c)\int_{F^\times}W_c(h(a,1))\chi(a)|a|^{3s-3}d^*a.
\end{align*}
By \eqref{eq3.1}, we get 
\begin{align*}
Z(W_c,\tilde f_s^c)&=D_c\sum_{m\ge 0} \frac{q^{-2m}}{t_1-t_1^{-1}}(t_1^{2m+1}-t_1^{-(2m+1)})(\chi(\varpi) q^{-(3s-3)})^m\\
&=D_c\sum_{m\ge 0}\frac{1}{t_1-t_1^{-1}}(t_1^{2m+1}-t_1^{-(2m+1)})(\chi(\varpi)q^{-(3s-1)})^m\\
&=D_c\frac{1+\chi(\varpi)q^{-(3s-1)}}{(1-t_1^2\chi(\varpi)q^{-(3s-1)})(1-t_1^{-2}\chi(\varpi)q^{-(3s-1)})}.
\end{align*}
This concludes the proof.
\end{proof}

\section{Holomorphy of adjoint \texorpdfstring{$L$}{Lg}-function for \texorpdfstring{$\GL_3$}{Lg}}\label{sec: split main theorem}
In this section, let $F$ be a global field and $\BA$ be the ring of adeles of $F$. Let $\pi=\otimes \pi_v$ be an irreducible cuspidal automorphic representation of $\GL_3(\BA)$. Let $\chi=\otimes\chi_v$ be a unitary Hecke character of $F^\times\setminus \BA^\times$. Then one can consider the twisted adjoint $L$-function $$L(s,\pi,\Ad\times \chi)=\frac{L(s,(\chi\otimes \pi)\times \tilde \pi)}{L(s,\chi)}.$$

For a fixed $\pi$ and $\chi$, let $S=S(\pi,\chi)$ be the finite set of places consisting of all archimedean places and all finite places $v$ such that either $\pi_v$ or $\chi_v$ is ramified. The partial twisted adjoint $L$-function $L^S(s,\pi,\Ad\times \chi)$ is defined by 
$$L^S(s,\pi,\Ad\times \chi)=\prod_{v\notin S}L(s,\pi_v, \Ad\times \chi_v)=\prod_{v\notin S}\frac{L(s,(\chi_v\otimes \pi_v)\times \tilde \pi_v)}{L(s,\chi_v)}.  $$

After the pioneering work of Ginzburg \cite{G}, and Ginzburg-Jiang \cite{GJ}, the following result was obtained:
\begin{thm}[Theorem 6.1 of \cite{H18}]\label{thm: hundley}
 The partial twisted adjoint $L$-function $L^S(s,\pi,\Ad\times \chi)$ has no poles in the half plane $\Re(s)\ge \frac{1}{2}$, except possibly for a simple pole at $\Re(s)=1$ when $\chi$ is non-trivial and $\pi \cong \pi \otimes \chi$. 
(This forces $\chi$ to be cubic.)
Every other pole of the complete $L$-function $L(s,\pi,\Ad\times \chi)$ in $\Re(s)\ge \frac{1}{2}$ is a zero of the Hecke $L$-function $L(s,\chi)$ and a pole of $\prod_{v\in S}L_v(s,\pi_v,\Ad\times \chi_v)$.
\end{thm}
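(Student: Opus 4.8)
The plan is to run Ginzburg's integral representation through the Ginzburg--Jiang method of ruling out poles. Fix a factorizable cusp form $\varphi=\otimes_v\varphi_v$ in $\pi$, a factorizable section $f_s=\otimes_v f_{s,v}\in I(s,\chi)$, and form the $G_2$-Eisenstein series $E(g,f_s)$ together with the global zeta integral $Z(\varphi,f_s)=\int_{H(F)\backslash H(\A)}\varphi(g)E(g,f_s)\,dg$, $H\cong\SL_3$. By the unfolding of \cite{G} one has the Euler product $Z(\varphi,f_s)=\prod_v Z^*(W_v,f_{s,v})$ with $Z^*$ as in the introduction, and $Z^*(W_v,f_{s,v})=L(s,\pi_v,\Ad\times\chi_v)$ for $v\notin S$, so that
\[
L^S(s,\pi,\Ad\times\chi)=\frac{Z(\varphi,f_s)}{\prod_{v\in S}Z^*(W_v,f_{s,v})}.
\]
For $v\in S$ the factor $L(3s,\chi_v)L(6s-2,\chi_v^2)L(9s-3,\chi_v^3)$ built into $Z^*(W_v,f_{s,v})$ is holomorphic and non-vanishing on $\Re(s)\ge\frac12$ (its only possible poles have $\Re(s)\le\frac13$), so for a given $s_0$ with $\Re(s_0)\ge\frac12$ it suffices to choose, at each $v\in S$, data $W_v,f_{s,v}$ making the remaining local integral $\int_{N_{2,\rho}\backslash H_\rho(F_v)}W_v(g)f_{s,v}(w_\beta g)\,dg$ holomorphic and non-zero at $s_0$. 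At the finite places of $S$ this is Lemma~\ref{lemnonvanishing}; at the archimedean places it is the non-vanishing assertion whose proof rests on the continuity of $f_{s_0}\mapsto Z(W,f_{s_0})$ established above --- exactly the point left open in \cite{H18}. With such a choice, every pole of $L^S(s,\pi,\Ad\times\chi)$ in $\Re(s)\ge\frac12$ is a pole of $Z(\varphi,f_s)$.

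Since $\varphi$ is a cusp form, $Z(\varphi,f_s)$ is holomorphic wherever $E(g,f_s)$ is, so I would next bound the poles of the $G_2$-Eisenstein series in $\Re(s)\ge\frac12$. By Langlands' theory these lie among the poles of the constant terms of $E(g,f_s)$, which are controlled by the standard intertwining operators; the Gindikin--Karpelevich normalizing factors of the latter are products of ratios of completed Hecke $L$-functions attached to $\chi,\chi^2,\chi^3$. The key structural point is that a zero of a Hecke $L$-function can only enter through a denominator of such a ratio, and one checks that the corresponding shifted arguments have real part $>1$ on $\Re(s)\ge\frac12$, so they contribute no pole there; the poles therefore come only from numerator factors $\Lambda(\ell s-m,\chi^j)$, at the points where $\ell s-m=1$ and $\chi^j$ is trivial. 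The bookkeeping of the relevant shifts, carried out in \cite{G,GJ,H18}, shows that the only such point in $\Re(s)\ge\frac12$ is $s=1$, with pole order at most two. Combined with the previous paragraph, this gives that $L^S(s,\pi,\Ad\times\chi)$ is holomorphic in $\Re(s)\ge\frac12$ away from $s=1$, with a pole of order at most two at $s=1$.

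The main obstacle is to reduce the pole order at $s=1$ from two to one; this is the Ginzburg--Jiang argument, and it is the conceptual heart of the proof. Suppose $Z(\varphi,f_s)$ had a double pole at $s=1$. By the first-term identity of \cite{GJ}, which relates the leading Laurent coefficients at $s=1$ of the $G_2$-Eisenstein series attached to the two maximal parabolics, the second Laurent coefficient of $Z(\varphi,f_s)$ at $s=1$ equals, up to a non-zero scalar, the value of a second global integral formed from the Eisenstein series on the other maximal parabolic of $G_2$. Unfolding that integral yields a period integral of $\varphi$ over a subgroup of $\SL_3$ conjugate to $\SL_2$, and such a period vanishes on every cuspidal automorphic representation of $\GL_3(\A)$; hence $Z(\varphi,f_s)$, and therefore $L^S(s,\pi,\Ad\times\chi)$, has at most a simple pole at $s=1$.

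It remains to locate the simple pole and to prove the last assertion, for which I would invoke the factorization \eqref{Ad = RS/zeta}, valid for the partial $L$-functions because the local factors agree at all $v\notin S$:
\[
L^S(s,\pi,\Ad\times\chi)=\frac{L^S(s,(\chi\pi)\times\wt\pi)}{L^S(s,\chi)}.
\]
By Jacquet--Piatetski-Shapiro--Shalika, $L^S(s,(\chi\pi)\times\wt\pi)$ is holomorphic on $\Re(s)\ge\frac12$ except for a simple pole at $s=1$ that occurs exactly when $\chi\pi\cong\pi$, while $L^S(s,\chi)$ has a simple pole at $s=1$ exactly when $\chi=1$. Comparing the two cases, $L^S(s,\pi,\Ad\times\chi)$ is holomorphic at $s=1$ when $\chi=1$ and acquires a necessarily simple pole at $s=1$ precisely when $\chi\ne1$ and $\pi\cong\pi\otimes\chi$; comparing central characters in the latter relation then forces $\chi^3=1$, i.e.\ $\chi$ is cubic. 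Finally, if $s_1\ne1$ is a pole of the \emph{complete} $L$-function $L(s,\pi,\Ad\times\chi)$ with $\Re(s_1)\ge\frac12$, then since the complete $L$-function $L(s,(\chi\pi)\times\wt\pi)$ is entire away from $s=1$, \eqref{Ad = RS/zeta} forces $s_1$ to be a zero of the Hecke $L$-function $L(s,\chi)$; and as $L^S(s,\pi,\Ad\times\chi)$ is holomorphic at $s_1$ while $L(s,\pi,\Ad\times\chi)=L^S(s,\pi,\Ad\times\chi)\cdot\prod_{v\in S}L_v(s,\pi_v,\Ad\times\chi_v)$, the point $s_1$ must be a pole of $\prod_{v\in S}L_v(s,\pi_v,\Ad\times\chi_v)$.
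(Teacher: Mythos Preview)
Your outline is the argument of \cite{H18} (building on \cite{G,GJ}) that the paper cites and summarizes, including the archimedean continuity step that the present paper supplies to close the gap in \cite{H18}. One normalization needs correcting: Ginzburg's unramified computation gives $Z^*(W_v,f_{s,v})=L(3s-1,\pi_v,\Ad\times\chi_v)$, not $L(s,\pi_v,\Ad\times\chi_v)$, so the relevant pole of the normalized $G_2$-Eisenstein series (equivalently of $Z(\varphi,f_s)$) sits at $s=\tfrac{2}{3}$, not at $s=1$; under $s\mapsto 3s-1$ this corresponds to the pole of $L^S(s,\pi,\Ad\times\chi)$ at $s=1$, and the half-plane $\Re(s)\ge\tfrac12$ is carried to itself, so your conclusions are unaffected once the shift is inserted. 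With that fix, the remaining steps --- local non-vanishing at $v\in S$, the first-term identity of \cite{GJ} ruling out a double pole via vanishing of the $\SL_2$-period on cusp forms of $\GL_3$, and the Rankin--Selberg identification \eqref{Ad = RS/zeta} of the residual simple pole and of the ``other'' poles --- agree with the paper's account.
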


The analogous result for quasisplit unitary groups was also obtained. 
We want to extend the above result to $L_f(s,\pi,\Ad\times \chi)$, where $L_f(s,\pi,\Ad\times \chi)=\prod_{v\notin S_\infty}L(s,\pi_v,\Ad\times \chi_v)$ is the finite part of the $L$-function, where $S_\infty$ is the set of infinite places of $F$. In order to do this, we must treat all of the finite places $v\in S-S_{\infty}$, i.e., either $\pi_v$ or $\chi_v$ 
is ramified. The cases where $\pi_v$ is ramified 
can be split up according to whether $\pi_v$ is tempered or non-tempered. In the non-tempered case, we are able to exploit the classification of unitary 
representations, to say that a representation which is ramified, unitary, and non-tempered is of a fairly specific form. 
In the tempered case, the arguments which we use for $\GL_3$ work equally well in the unitary group case, and we therefore record them in 
this generality.

\begin{lem}\label{lem:tempered}
Let $v$ be a place of $F,$ $\rho$ an element of $F^\times,$ and $\pi_v$ an irreducible generic tempered representation of $H_\rho(F_v).$ 
Then $L(s, \pi_v, \Ad\times \chi_v)$ has no poles in $\Re(s) > 0.$
\end{lem}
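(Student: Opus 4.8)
The plan is to deduce the lemma from the standard holomorphy of local $L$-factors of tempered generic representations, handling the split and non-split cases by reducing respectively to Rankin--Selberg $L$-factors and to (twisted) Asai $L$-factors. The first step is a reduction to general linear groups: since the adjoint representation of $\GL_3(\C)$ on $\f{sl}_3(\C)$ factors through $\PGL_3(\C)$, the factor $L(s,\pi_v,\Ad\times\chi_v)$ depends only on the restriction of $\pi_v$ to $\SL_3(F_v)$ (resp.\ to $\SU_{2,1}(F_v)$) in the split (resp.\ non-split) case, so I would fix an irreducible generic tempered extension of $\pi_v$ to $\GL_3(F_v)$ (resp.\ to $\RU_{2,1}(F_v)$) and argue with that, keeping the notation $\pi_v$. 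Here $\chi_v$ is unitary, being the local component of the unitary Hecke character $\chi$ of \S\ref{sec: split main theorem}.

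In the split case ($\rho$ a square, $H_\rho\cong\SL_3$), \eqref{Ad = RS/zeta} gives
$$L(s,\pi_v,\Ad\times\chi_v)=\frac{L\bigl(s,(\chi_v\otimes\pi_v)\times\tilde\pi_v\bigr)}{L(s,\chi_v)}.$$
Since $\chi_v$ is unitary and $\pi_v$ is tempered, both $\chi_v\otimes\pi_v$ and $\tilde\pi_v$ are irreducible generic tempered representations of $\GL_3(F_v)$, and I would invoke the classical fact that a local Rankin--Selberg $L$-factor of two tempered generic representations of general linear groups has no pole in $\Re(s)>0$. At a non-archimedean place this is a consequence of the multiplicativity of the Rankin--Selberg $L$-factor \cite{JPSS83}, the description of tempered representations as irreducible representations parabolically induced from unitary essentially square-integrable data, and the explicit formula for the $L$-factor of a pair of essentially square-integrable representations, whose rightmost pole lies in $\Re(s)\le0$; at an archimedean place one checks instead that the relevant product of Gamma factors has no pole in $\Re(s)>0$. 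As $1/L(s,\chi_v)$ is entire --- equal to $1-\chi_v(\varpi_v)q_v^{-s}$ when $\chi_v$ is unramified, to $1$ when $\chi_v$ is ramified, and to the reciprocal of a Gamma factor at an archimedean place --- the quotient is holomorphic in $\Re(s)>0$.

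In the non-split case ($\rho$ not a square, $H_\rho$ a quasisplit unitary group), I would use the $L$-factor analogue of the relation in Definition~\ref{defn: local gamma factors},
$$L(s,\pi_v,\Ad'\times\chi_v)=\frac{L\bigl(s,\sbc(\pi_v),\Asai\times\chi_v\bigr)}{L(s,\chi_v)}.$$
Stable base change preserves temperedness and genericity, so $\sbc(\pi_v)$ is a tempered generic representation of $\GL_3(E_v)$ and the numerator is its twisted Asai $L$-factor. The point is that this factor has no pole in $\Re(s)>0$: this is part of the holomorphy statements of the Langlands--Shahidi theory for tempered generic data, and it may also be extracted from the identity $L(s,\Pi\times\Pi^\sigma)=L(s,\Pi,\Asai)\,L(s,\Pi,\Asai\times\chi_{E/F})$ (and its twist by $\chi_v$), applied to $\Pi=\sbc(\pi_v)$ with $\Pi^\sigma$ the Galois conjugate, in which the left-hand side is a Rankin--Selberg $L$-factor over $E_v$ of tempered representations and is therefore holomorphic in $\Re(s)>0$ by the split-case discussion (cf.\ \cite[p.\ 317]{GRSBook}). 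Again $1/L(s,\chi_v)$ is entire, and the lemma follows.

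The only real content, as opposed to formal manipulation, is this uniform holomorphy in $\Re(s)>0$ of Rankin--Selberg and twisted Asai local $L$-factors of tempered generic representations, together with the preservation of temperedness by $\sbc$. For Rankin--Selberg $L$-factors this is entirely standard; the ingredient I expect to need the most care is the Asai $L$-factor over an archimedean field, which is nonetheless covered by the Langlands--Shahidi machinery and by the identity above.
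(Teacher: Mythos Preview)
Your proof is correct and follows essentially the same route as the paper's: in the split case reduce to holomorphy of Rankin--Selberg factors for tempered pairs (the paper cites \cite[Proposition~8.4]{JPSS83}), and in the non-split case use that $\sbc$ preserves temperedness (the paper cites \cite[Proposition~8.6]{KimKrishEven} and its odd analogue) together with holomorphy of Asai $L$-factors for tempered data (the paper cites \cite[Proposition~7.2]{Shahidi-Plancherel}). One minor slip: in the non-split case you wrote the identity for $\Ad'$ rather than for $\Ad$ as in the lemma, but since $\Ad$ and $\Ad'$ differ by the unitary twist $\chi_{E/F}$ and your Asai argument works for an arbitrary unitary twist, this is harmless.
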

\begin{proof} If $H_\rho$ is isomorphic to  $\SL_3$ over $F_v,$ then 
$L(s, \pi_v, \Ad \times \chi) =L(s, \pi_v \times (\wt \pi_v \otimes \chi_v))/L(s, \chi_v).$ 
Since $L(s, \chi_v)^{-1}$ has no poles at all, it suffices to prove that $L(s, \pi_v \times \wt \pi_v \times \chi_v)$ has no poles in $\Re(s) > 0.$ 
This may be deduced from \cite[Proposition 8.4, page 451]{JPSS83}, since $(\wt \pi_v \otimes \chi_v)$ is again tempered. 

If $H_\rho$ is not split over $F_v$, then it determines a quadratic extension field $F_v(\tau)$ of $F_v.$ 
Let $\chi_{F_v(\tau)/F_v}$ denote the corresponding quadratic character. 
Then 
$$L(s, \pi_v, \Ad \times \chi) =L(s, \sbc(\pi_v), \Asai \times (\chi_v\chi_{F_v(\tau)/F_v}))/L(s, \chi_v),$$
where $\sbc$ denotes the stable base change lift of Kim and Krishnamurthy \cite{KimKrishOdd}. 
The twisted Asai $L$-function $L(s, \sbc(\pi_v), \Asai \times \chi_v\chi_{F_v(\tau)/F_v})$ may also be realized 
Asai $L$-function of the twist: $L(s, \sbc(\pi_v)\otimes \wt \chi, \Asai)$
where $\wt \chi$ is any character of $F_v(\tau)^\times$ whose restriction to $F_v^\times$ 
is $\chi_v \chi_{F_v(\tau)/F_v}.$ 
Thus it suffices to show that the stable base change lift of a tempered representation is again tempered, 
and that the Asai $L$-function of a tempered representation has no pole in $\Re(s) > 0. $

The fact that the local stable base change lift of a generic tempered representation is again tempered
is proved for quasisplit $\RU_{2n}$ in Proposition 8.6 of 
\cite{KimKrishEven}.
The argument adapts to $\RU_{2n+1}$ in a straightforward manner, using the results of \cite{KimKrishOdd}.
Holomorphy of the Asai $L$-function for tempered representations in $\Re(s)> 0$  
then follows from Proposition 7.2 of \cite{Shahidi-Plancharel}.
\end{proof}

\begin{prop}\label{prop: calculation of zeta}
Fix a finite place $v\in S-S_{\infty}$ of $F.$ 
Let 
$\pi_v$ be a nontempered irreducible unitary representation of $\GL_3(F_v)$ 
and  $\psi_v$ be an additive character of $F_v$.
Define $\psi_{\rho, v}:U_\rho(F_v) \to \C^\times$ 
by \eqref{psirho} and assume that $\pi_v$ is $\psi_{\rho,v}$-generic.
Let $I(s,\chi_v)$ be the induced representation of $G_2(F_v)$ defined in $\S\ref{defn of I(s,chi)}$. Then there exists a Whittaker function $W_v\in \CW(\pi_v,\psi_{\rho,v})$ and a standard section $f_{s,v}\in I(s,\chi_v)$ such that 
$$L(3s,\chi_v)L(6s-2,\chi_v^2)L(9s-3,\chi_v^3)\frac{Z(W_v,f_{s,v})}{L(3s-1,\pi_v,\Ad\times \chi_v)}$$
has no zeros on the region $\Re(s)\ge \frac{1}{2}$.
\end{prop}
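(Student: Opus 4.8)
The strategy is to reduce to the explicit evaluation in Proposition~\ref{prop: special case}, to compute the local factor $L(3s-1,\pi_v,\Ad\times\chi_v)$ by hand, and then to locate the zeros of the resulting rational function of $q^{-s}$. We begin with the reduction. An irreducible principal series is $\psi'$-generic for every nontrivial additive character $\psi'$, and by Proposition~\ref{prop:dependence on psi} replacing $\psi_v$ by another character multiplies $Z(W_v,f_{s,v})$ by the everywhere nonzero factor $\chi_s(h(1,a))^{\pm1}$ while leaving the relevant $L$-factors unchanged; so we may take $\psi_v$ unramified. By the classification of the unitary dual of $\GL_3(F_v)$ (Tadi\'{c}), a generic nontempered irreducible unitary representation of $\GL_3(F_v)$ has the form $\mu|\cdot|^{\alpha}\times\mu|\cdot|^{-\alpha}\times\nu$ with $\mu,\nu$ unitary characters of $F_v^\times$ and $0<\alpha<\tfrac12$. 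Since $Z(W_v,f_{s,v})$ depends only on $W_v|_{\SL_3(F_v)}$ and $\Ad$ factors through $\PGL_3(\BC)$, twisting $\pi_v$ by a character changes neither $Z$ nor $L(s,\pi_v,\Ad\times\chi_v)$; twisting by $\mu^{-1}$ we may thus assume $\pi_v=\Ind_{B_3}^{\GL_3(F_v)}(|\cdot|^{\alpha}\otimes\mu_2\otimes|\cdot|^{-\alpha})$, so that $\mu_1\mu_3=1$ in the notation of Section~\ref{computation of the local zeta integral in a special case}. If $\mu_2$ is unramified then $\pi_v$ is a character twist of an unramified representation, $Z$ and $L(s,\pi_v,\Ad\times\chi_v)$ coincide with those of the underlying unramified representation, and the claim follows from Ginzburg's unramified computation \cite{G} together with Theorem~\ref{thm: hundley}; hence we may assume $\mu_2$ ramified of conductor $c\ge1$, which is exactly the setting of Proposition~\ref{prop: special case}. (We carry out the case of unramified $\chi_v$, which contains the difficulty highlighted in the introduction; for ramified $\chi_v$ the argument is the same, with most of the Euler factors below equal to $1$.)

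Now take $W_v=W_c$, the Whittaker function of the new vector from Theorem~\ref{thm1.3}, and $f_{s,v}=\tilde f_s^c$, the section constructed just before Proposition~\ref{prop: special case}. With $t_1=|\varpi|^{\alpha}=q^{-\alpha}$, Proposition~\ref{prop: special case} gives
\[
Z(W_c,\tilde f_s^c)=D_c\frac{1+\chi_v(\varpi)q^{-(3s-1)}}{(1-q^{-2\alpha}\chi_v(\varpi)q^{-(3s-1)})(1-q^{2\alpha}\chi_v(\varpi)q^{-(3s-1)})},\qquad D_c\neq0 .
\]
On the other hand, writing $L(s',\pi_v,\Ad\times\chi_v)=L(s',(\chi_v\otimes\pi_v)\times\wt\pi_v)/L(s',\chi_v)$ and factoring the Rankin--Selberg $L$-function of these two irreducible principal series over the nine pairs of inducing characters, the six factors involving $\mu_2^{\pm1}$ are trivial, and one is left with
\[
L(3s-1,\pi_v,\Ad\times\chi_v)^{-1}=(1-\chi_v(\varpi)q^{-(3s-1)})^{2}(1-q^{-2\alpha}\chi_v(\varpi)q^{-(3s-1)})(1-q^{2\alpha}\chi_v(\varpi)q^{-(3s-1)}).
\]
Two of these three factors cancel the denominator of $Z(W_c,\tilde f_s^c)$, so $Z(W_c,\tilde f_s^c)/L(3s-1,\pi_v,\Ad\times\chi_v)=D_c(1+\chi_v(\varpi)q^{-(3s-1)})(1-\chi_v(\varpi)q^{-(3s-1)})^{2}$. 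Since $L(3s,\chi_v)$, $L(6s-2,\chi_v^2)$, $L(9s-3,\chi_v^3)$ are reciprocals of polynomials in $q^{-s}$ and hence nowhere zero, the expression in the statement equals $D_c$ times a rational function of $q^{-s}$ whose numerator is $(1+\chi_v(\varpi)q^{-(3s-1)})(1-\chi_v(\varpi)q^{-(3s-1)})^{2}$. Because $\chi_v$ is unitary, $|\chi_v(\varpi)q^{-(3s-1)}|=q^{1-3\Re(s)}<1$ whenever $\Re(s)>\tfrac13$; hence neither $1+\chi_v(\varpi)q^{-(3s-1)}$ nor $1-\chi_v(\varpi)q^{-(3s-1)}$ vanishes for $\Re(s)\ge\tfrac12$, while the poles coming from the denominator are irrelevant. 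This yields the asserted absence of zeros in $\Re(s)\ge\tfrac12$.

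I expect the only genuinely delicate point to be the reduction step: bringing an arbitrary ramified nontempered generic unitary $\pi_v$ into the normal form required by Proposition~\ref{prop: special case}, via the classification of the unitary dual and the twist-invariance of $Z$ and of the adjoint $L$-factor, and carefully isolating the degenerate sub-cases ($\mu_2$ or $\chi_v$ unramified). Once $\pi_v$ is in that form, what remains is the bookkeeping carried out above.
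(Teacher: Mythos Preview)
Your treatment of the main case ($\mu_2$ ramified, $\chi_v$ unramified) is correct and is essentially what the paper does: invoke Proposition~\ref{prop: special case}, compute $L(3s-1,\pi_v,\Ad\times\chi_v)$ explicitly, and examine the quotient. Two remarks on this part. First, a harmless miscount: only four of the nine Rankin--Selberg factors involve $\mu_2^{\pm1}$ nontrivially, not six, but your formula for $L^{-1}$ is right. Second, the paper carries the simplification one step further: since $(1+X)(1-X)^2=L(3s-1,\chi_v)^{-1}L(6s-2,\chi_v^2)^{-1}$, the full expression collapses to $L(3s,\chi_v)L(9s-3,\chi_v^3)/L(3s-1,\chi_v)$, which is the reciprocal of a polynomial in $q^{-s}$ and hence has no zeros \emph{anywhere}. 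Your argument via $|X|<1$ is fine for $\Re(s)\ge\tfrac12$ but needs $\chi_v$ unitary, which the paper's version does not.

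The genuine gaps are in your handling of the two ``easy'' sub-cases. When $\mu_2$ is unramified, $v\in S\smallsetminus S_\infty$ forces $\chi_v$ to be ramified, so Ginzburg's unramified identity (which requires \emph{both} $\pi_v$ and $\chi_v$ unramified) does not apply, and Theorem~\ref{thm: hundley} is a global statement about the partial $L$-function that says nothing about a single local factor. The correct argument here is that $L(s,\pi_v,\Ad\times\chi_v)=1$ (all nine Rankin--Selberg factors and the Tate factor are trivial since $\chi_v$ is ramified and $\pi_v$ unramified), so one simply invokes Lemma~\ref{lemnonvanishing} to choose data with $Z$ a nonzero constant. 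Similarly, when both $\mu_2$ and $\chi_v$ are ramified, your ``the argument is the same'' fails because Proposition~\ref{prop: special case} is stated only for unramified $\chi$: the section $f_s^c$ is not well-defined otherwise, since $\chi_s'$ would be nontrivial on $h(\fo^\times,1)\subset K_c$. The paper's argument in this case is again different and much simpler: one checks directly that the surviving factors $L(s,\chi_v\mu_2^{\pm1}|\cdot|^{\pm\alpha})$ have no poles in $\Re(s)\ge\tfrac12$ (since $\alpha<\tfrac12$ and the characters are unitary), and then Lemma~\ref{lemnonvanishing} again suffices.
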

\begin{proof}
By proposition \ref{prop:dependence on psi}, it suffices to treat the case when $\psi$ is also unramified. 
By the classification of unitary representation of $\GL_3$, \cite[$\S$6]{JPSS79}, the representation has the form $\pi_v=\eta_v\otimes \sigma_v$, where $\eta_v$ is a unitary character of $F_v^\times$ and $\sigma_v$ is of the form 
$$\Ind_{B_{3}(F_v)}^{\GL_3(F_v)}(|~|^{\alpha}\otimes \mu_2\otimes |~|^{-\alpha}),$$
where $\mu_2$ is a unitary character of $F_v^\times$ and $\alpha$ is a real number with $0<\alpha<\frac{1}{2}$.

Note that $L(s,\pi_v,\Ad\times \chi_v)=L(s,\sigma_v,\Ad\times \chi_v)$ and for $W_v\in \CW(\pi_v,\psi_v)$, $W_v|_{\SL_3(F_v)}$ is a Whittaker function for $\sigma_v$. Since Ginzburg's integral $Z(W_v,f_{s,v})$ only depends on $W_v|_{\SL_3(F_v)}$, we can assume that $\eta_v=1$, i.e., $\pi_v=\sigma_v$. 

Note that the assumption $v\in S-S_\infty$ implies that either $\mu_2$ is ramified or $\chi_v$ is ramified.

We first consider the case when $\mu_2$ is unramified. If $\chi_v$ is ramified, from the equation $L(s,\pi_v,\Ad\times \chi_v)=L(s,(\chi_v\pi_v)\otimes \wt \pi_v)/L(s,\chi_v)$ and \cite[Theorem 3.1]{JPSS83}, one can check that $L(s,\pi_v,\Ad\times \chi_v)=1$. Thus the Claim follows from the fact that one can find $W_v,f_{s,v}$ such that $Z(W_v,f_{s,v})$ is a nonzero constant, see Lemma \ref{lemnonvanishing}.

We next consider the case when $\mu_2$ is ramified and $\chi_v$ is also ramified. In this case by \cite[Theorem 4.1]{CPS}, one can check that  $$L(s,\pi_v,\Ad\times \chi_v)=L(s,\chi_v\mu_2^{-1}|~|^\alpha)L(s,\chi_v\mu_2^{-1}|~|^{-\alpha})L(s,\chi_v\mu_2|~|^\alpha)L(s,\chi_v\mu_2|~|^{-\alpha}).$$ Note that if $\chi_v\mu_2$ and $\chi_v\mu_2^{-1}$ are ramified, then $L(s,\pi_v,\Ad\times\chi_v)=1$ and thus it has no pole. 
If either  $\chi_v\mu_2$ or $\chi_v\mu_2^{-1}$ is unramified, then $L(s,\pi_v,\Ad\times\chi_v)$ has no pole on the region $\Re(s)\ge \frac{1}{2},$ because  $\chi_v$ and $\mu_2$ are unitary and $\alpha<1/2.$ Hence $L(s,\pi_v,\Ad\times \chi_v)$ has no pole in the region $\Re(s)\ge \frac{1}{2}.$
The assertion then also follows from the nonvanishing of the local Ginzburg's integrals, see Lemma \ref{lemnonvanishing}.

Finally, we consider the case when $\mu_2$ is ramified and $\chi_v$ is unramified. In this case, by \cite[Theorem 4.1]{CPS} again, we have
$$L(s,\pi_v,\Ad\times \chi_v)=L(s,\chi_v)^2L(s,\chi_v|~|^{2\alpha})L(s,\chi_v|~|^{-2\alpha}).$$
By Proposition \ref{prop: special case}, we can take $W_v\in \CW(\pi_v,\psi_v)$ and a standard section $f_{s,v}\in I(s,\chi_v)$ such that 
\begin{align*}Z(W_v,f_{s,v})&=\frac{1+\chi_v(\varpi)q^{-(3s-1)}}{(1-\chi_v(\varpi)|\varpi|^{2\alpha}q^{-(3s-1)})(1-\chi_v(\varpi)|\varpi|^{-2\alpha}q^{-(3s-1)})}\\
&=\frac{1-\chi_v(\varpi)^2q^{-(6s-2)}}{(1-\chi_v(\varpi)q^{-(3s-1)})(1-\chi_v(\varpi)|\varpi|^{2\alpha}q^{-(3s-1)})(1-\chi_v(\varpi)|\varpi|^{-2\alpha}q^{-(3s-1)})}\\
&=\frac{L(3s-1,\chi_v)L(3s-1,\chi_v|~|^{2\alpha})L(3s-1,\chi_v|~|^{-2\alpha})}{L(6s-2,\chi_v^2)}.
\end{align*}
Thus 
\begin{align*}
\frac{Z(W_v,f_{s,v})}{L(3s-1,\pi_v,\Ad\times \chi_v)}&=\frac{1}{L(3s-1,\chi_v)L(6s-2,\chi_v^2)},
\end{align*}
and 
\begin{align*}
&L(3s,\chi_v)L(6s-2,\chi_v^2)L(9s-3,\chi_v^3)\frac{Z(W_v,f_{s,v})}{L(3s-1,\pi_v,\Ad\times \chi_v)}\\
=&L(3s,\chi_v)\frac{L(9s-3,\chi_v^3)}{L(3s-1,\chi_v)}\\
=&\frac{1}{(1-\chi_v(\varpi)q^{-3s})(1+\chi_v(\varpi)q^{-(3s-1)}+\chi_v(\varpi)^2q^{-(6s-2)})}.
\end{align*}
The last expression clearly has no zeros. The assertion follows. 
\end{proof}

\begin{thm}\label{thm4.2}
Let $\chi$ be a unitary Hecke character of $F^\times\setminus \BA^\times$ and $\pi$ be an irreducible cuspidal automorphic representation of $\GL_3(\BA)$, then the finite part of the adjoint $L$-function $L_f(s,\pi, \Ad\times \chi)$ is holomorphic on the region $\Re(s)\ge 1/2$, except for a simple pole at $\Re(s)=1$ when $\chi$ is non-trivial and 
$\pi \cong \pi \otimes \chi$ (which forces $\chi$ to be cubic).
\end{thm}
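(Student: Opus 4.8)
We have the identity
\[
L_f(s,\pi,\Ad\times\chi)=L^S(s,\pi,\Ad\times\chi)\prod_{v\in S-S_\infty}L(s,\pi_v,\Ad\times\chi_v),
\]
so the plan is to deduce Theorem \ref{thm4.2} from the partial $L$-function statement in Theorem \ref{thm: hundley} (which becomes unconditional once the continuity lemma sketched above is used to repair the proof of \cite[Theorem 6.1]{H18}) together with the local analysis at the finite ramified places, glued by Ginzburg's Euler factorization. Normalizing the Eisenstein parameter so that at the unramified places $Z^*(W_v,f_{s,v})=L(s,\pi_v,\Ad\times\chi_v)$, the factorization $Z(\varphi,f_s)=\prod_v Z^*(W_v,f_{s,v})$ gives, for decomposable data,
\[
L_f(s,\pi,\Ad\times\chi)=\frac{Z(\varphi,f_s)}{\prod_{v\in S_\infty}Z^*(W_v,f_{s,v})}\cdot\prod_{v\in S-S_\infty}\frac{L(s,\pi_v,\Ad\times\chi_v)}{Z^*(W_v,f_{s,v})}.
\]
I would fix $s_0$ with $\Re(s_0)\ge\frac12$ and $s_0\neq 1$, and choose the test data so that every factor on the right is holomorphic at $s_0$.

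At a place $v\in S-S_\infty$ I split on temperedness of $\pi_v$. If $\pi_v$ is tempered, $L(s,\pi_v,\Ad\times\chi_v)$ has no pole in $\Re(s)>0$ by Lemma \ref{lem:tempered}, while Lemma \ref{lemnonvanishing} supplies data with $Z(W_v,f_{s,v})$ a nonzero constant; since the $\GL_1$ factors entering $Z^*$ are nowhere vanishing, $L(s,\pi_v,\Ad\times\chi_v)/Z^*(W_v,f_{s,v})$ is holomorphic on $\Re(s)\ge\frac12$. If $\pi_v$ is non-tempered, it is of the explicit form recalled in Proposition \ref{prop: calculation of zeta}, and that proposition --- which rests on the Whittaker function formula of Theorem \ref{thm1.3} and the section of Proposition \ref{prop: special case} --- provides data with $Z^*(W_v,f_{s,v})/L(s,\pi_v,\Ad\times\chi_v)$ having no zero on $\Re(s)\ge\frac12$, so again the ratio is holomorphic there. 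Hence the finite-ramified product is holomorphic on $\Re(s)\ge\frac12$. At the archimedean places, \cite[Theorem 5.1]{H18} (legitimate once the continuity lemma above, or \cite{Tianthesis,Ti18} in the split case, is invoked) lets me choose data making $\prod_{v\in S_\infty}Z^*(W_v,f_{s,v})$ holomorphic and nonzero at $s_0$, so its reciprocal is holomorphic at $s_0$.

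It remains to control $Z(\varphi,f_s)$ itself. Since $\varphi$ is cuspidal, hence rapidly decreasing, $Z(\varphi,f_s)$ is holomorphic wherever the $G_2$ Eisenstein series $E(\,\cdot\,,f_s)$ is; the residue analysis of this Eisenstein series --- the first term identity together with the vanishing of the relevant $\GL_3$ period, which is exactly what underlies Theorem \ref{thm: hundley} (\cite{GJ,H18}) --- shows that on $\Re(s)\ge\frac12$ its only possible singularity is a simple pole at $s=1$, occurring precisely when $\chi$ is nontrivial with $\pi\cong\pi\otimes\chi$ (which forces $\chi$ cubic). Feeding this into the displayed identity gives holomorphy of $L_f(s,\pi,\Ad\times\chi)$ on $\Re(s)\ge\frac12$ away from $s=1$, and at most a simple pole at $s=1$. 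Finally, in the exceptional case every finite local factor $L(s,\pi_v,\Ad\times\chi_v)$ is a product of terms $(1-\gamma q^{-s})^{-1}$ with $|\gamma|<q$ (for non-tempered ramified $\pi_v$ one first notes $\pi_v\cong\pi_v\otimes\chi_v$ forces $\chi_v$ trivial), hence is finite and nonzero at $s=1$; so the simple pole of $L^S$ at $s=1$ survives in $L_f$ unchanged.

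The main obstacle is the non-tempered ramified local case, i.e. a place $v\in S-S_\infty$ where $\chi_v$ is unramified but $\pi_v$ is ramified and non-tempered: here one must single out the correct test vector --- a new vector of $\pi_v$ for the congruence subgroup $K_c$ --- compute its Whittaker function in closed form (Theorem \ref{thm1.3}, via a long direct calculation), and then evaluate Ginzburg's local integral against a carefully chosen section (Proposition \ref{prop: special case}) so that the quotient $Z^*(W_v,f_{s,v})/L(s,\pi_v,\Ad\times\chi_v)$ is nowhere zero on $\Re(s)\ge\frac12$. The remaining pieces --- the global input, the tempered places, and the places where $\chi_v$ is ramified --- are comparatively routine.
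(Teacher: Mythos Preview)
Your proposal follows essentially the same architecture as the paper's proof: factor $L_f$ through the Euler product, control $Z(\varphi,f_s)$ via the Eisenstein residue analysis of \cite{GJ,H18}, use Lemma~\ref{lem:tempered} at tempered ramified places, Proposition~\ref{prop: calculation of zeta} at non-tempered ramified places, and nonvanishing of the archimedean integrals. The overall strategy is correct and matches the paper.

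There are, however, two small inaccuracies worth noting. First, you overstate what the Eisenstein residue analysis delivers: the first-term identity and period-vanishing argument from \cite{GJ,H18} show only that $Z(\varphi,f_s)$ has at most a simple pole on $\Re(s)\ge\tfrac12$, and only when $\chi$ is cubic. They do \emph{not} yield the precise condition $\pi\cong\pi\otimes\chi$; that comes from a separate source. The paper handles the point $s=1$ not through the global integral at all, but directly via Rankin--Selberg theory: the identity $L_f(s,\pi,\Ad\times\chi)=L_f(s,(\chi\pi)\times\tilde\pi)/L_f(s,\chi)$, together with \cite[Prop.~3.6]{JS-EulerProductsII} and \cite[Prop.~8.4]{JPSS83} for the numerator and Tate's thesis for the denominator, pins down exactly when the pole occurs. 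Your alternative route---transferring the pole from $L^S$ to $L_f$ by checking that the local factors at $v\in S-S_\infty$ are finite and nonzero at $s=1$---is valid, but note that Theorem~\ref{thm: hundley} asserts only that a pole of $L^S$ is \emph{possible} in the exceptional case, not that it exists; you would still need Rankin--Selberg input (for $L^S$) to conclude existence, so the paper's direct argument is more economical.

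Second, the paper begins by reducing to unitary $\pi$ via $\pi=|\det|^\zeta\pi_0$ and $L_f(s,\pi,\Ad\times\chi)=L_f(s,\pi_0,\Ad\times\chi)$. This step is needed because the non-tempered classification invoked in Proposition~\ref{prop: calculation of zeta} applies to unitary local components. You omit this reduction; it is routine but should be stated.
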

\begin{proof}
Note that we can write $\pi=|\det|^{\zeta}\pi_0$ with $\zeta\in \BC$ and $\pi_0$ unitary. From the relation $L_f(s,\pi,\Ad\times\chi)=L_f(s,(\chi\otimes\chi)\times \tilde \pi)/L_f(s,\chi)$, we can get $L_f(s,\pi,\Ad\times \chi)=L(s,\pi_0,\Ad\times\chi)$. Replacing $\pi$ by $\pi_0$ if necessary, we can assume that $\pi$ is unitary.

We first show that $L_f(s,\pi, \Ad\times \chi)$ is holomorphic for $\Re(s)\ge 1/2$.

Let $S=S(\pi,\chi)$. Then we can write $L_f(s,\pi,\Ad\times \chi)=\prod_{v\in S-S_\infty}L(s,\pi_v,\Ad \times \chi_v) \cdot L^S(s,\pi,\Ad\times \chi)$. By Theorem \ref{thm: hundley}, it suffices to consider $L(s,\pi_v, \Ad\times \chi_v)$ for every place $v\in S-S_\infty$. Note that each $\pi_v$ is unitary since $\pi$ is cuspidal. Note that if $\pi_v$ is tempered, $L(s,\pi_v, \Ad\times \chi_v)$ has no pole on $\Re(s)>0$ by Lemma \ref{lem:tempered}.

We next assume that $\pi_v$ is non-tempered.  Given a global section $f_s\in I(s,\chi)$, one can consider the normalized Eisenstein series 
 $$E(g,f_s)=L(3s,\chi)L(6s-2,\chi^2)L(9s-3,\chi^3)\sum_{\lambda\in P(F)\setminus G_2(F)}f_s(\lambda g).$$
  Given a cusp form $\varphi$ in the space of $\varphi$, which is assumed to correspond to a pure tensor, and a pure tensor $f_s=\otimes f_{s,v}\in I(s,\chi)$, Ginzburg \cite{G} defined
    the global integral $$Z(\varphi, f_s)=\int_{\SL_3(F)\setminus \SL_3(\BA)}\varphi(g)E(g,f_s)dg,$$ and showed that it is Eulerian: 
  $$Z(\varphi,f_s)=\prod_{v} Z^*(W_v,f_{s,v}),$$
where $Z^*(W_v,f_{s,v})=L(3s,\chi_v)L(6s-2,\chi_v^2)L(9s-3,\chi_v^3)Z(W_v,f_{s,v})$, $W_v$ is the 
$v$-th component of the Whittaker function of $\varphi.$
 In \cite{G}, it is also showed that if $v$ is a place such that $\pi_v, \chi_v$ and $\psi_v$ 
  are all unramified, then $Z^*(W_v,f_{s,v})=L(3s-1,\pi_v,\Ad\times \chi_v)$ for spherical $W_v,f_{s,v}$. 
  (Both \cite{G} and  \cite{H12} 
   only treat the case when $\chi$ is trivial, but the extension to nontrivial $\chi$ is direct.)
  The same identity holds up to an exponential factor at places 
  when $\pi_v$ and $\chi_v$ are unramified but 
  $\psi_v$ is ramified.
  Thus we get 
\begin{align}Z(\varphi,f_s)&=
\chi_s(h(1,a))\prod_{v\in S_\infty}Z^*(W_v,f_{s,v})\cdot \prod_{v\in S-S_\infty}Z^*(W_v,f_{s,v})\cdot L^S(3s-1,\pi,\Ad\times \chi) \label{eq4.1}\\
&=\chi_s(h(1,a))\prod_{v\in S_\infty}Z^*(W_v,f_{s,v})\cdot \prod_{v\in S-S_\infty}\frac{Z^*(W_v,f_{s,v})}{L(3s-1,\pi_v,\Ad\times \chi_v)}\cdot L_f(3s-1,\pi,\Ad\times \chi), \nonumber
\end{align}
for a certain id\`ele $a$ determined by the 
finite 
places $v$ such that $\psi_v$ is ramified
and $\pi_v$ and $\chi_v$ are not.
After the work of \cite{GJ}, it is shown in  \cite{H18} that for a flat section $f_s\in I(s,\chi)$, $Z(\varphi,f_s)$ has no pole on the region $\Re(s)\ge 1/2$ except for a possible simple pole at $\Re(s)=2/3$ which can occur only when $\chi$ is cubic. By Eq.(\ref{eq4.1}), the non-vanishing results of the local zeta integrals $Z^*(W_v,f_{s,v})$ \cite[Theorem 5.1]{H18} and Proposition \ref{prop: calculation of zeta}, we obtained that $L_f(3s-1,\pi,\Ad\times \chi)$ has no pole on the region $\Re(s)\ge 1/2$ except for a possible simple pole at $s=2/3$ in the case when $\chi$ is cubic. 

Thus $L_f(s,\pi, \Ad\times \chi)$ has no pole on the region $\Re(s)\ge 1/2$ except for a possible simple pole at $s=1$ in the case $\chi$ is cubic. 
%The behavior at $s=1$ can be deduced from the expression  $L_f(s,\pi,\Ad\times \chi )=\frac{L_f(s,\chi \pi\times \tilde \pi\)}{L_f(s,\chi)}.$
By Proposition 3.6 of  \cite{JS-EulerProductsII}, combined with \cite[Proposition 8.4, page 451]{JPSS83}, $L_f(s,\pi,\Ad\times \chi )$
has a  pole (which is simple when it exists) at $s=1$ if and only if $\chi\pi \cong \pi.$ 
By Tate's thesis or the original work of Hecke, $L_f(s, \chi)$ has a pole  (which is simple when it exists) at $s=1$ if and only if $\chi$ is 
trivial (in which case $L_f(s, \chi)$ is just the Dedekind zeta function of $F$). 

It follows that $L_f(s,\pi,\Ad\times \chi )$ has a pole at $s=1$ if and only if $\pi \cong \chi\pi$ (which implies $\chi^3=1$), and $\chi$ is nontrivial.
\end{proof}

\renewcommand{\ra}{\rangle}
\renewcommand{\bm}{\begin{matrix}}

\section{Further discussion of poles in the nonsplit case}\label{sec: quasisplit}
For the remainder of the paper we devote our attention to a detailed study of the poles of $L^S(s, \pi, \Ad \times \chi)$ when $\rho$
is a non-square. Here $S$ is a finite set of places, containing all Archimedean places, such that $\pi_v$ and $\chi_v$ 
are unramified for $v \notin S.$ 
By \cite{H18}, theorem 6.4,   $L^S(s, \pi, \Ad \times \chi)$ has no poles in $\Re(s) \ge \frac 12,$ and may have a pole at $s=1$
only if $\chi$ is nontrivial cubic or $\chi$ is quadratic and $\pi$ is 
distinguished with respect to a group $H'_\rho,$ which we may think of as $\SL_2,$ or more suggestively as $\SU_{1,1}$ 
embedded into $\SU_{2,1}.$ See \cite{H18} for details.
(In view of Lemma \ref{lem:tempered}, this information about the poles of $L^S(s, \pi, \Ad \times \chi)$ remains true even if we remove places $v$ such that $\pi_v$ is tempered from $S.$) 
Let $H^\flat_\rho$ denote the quasisplit unitary
 group $\RU_{1,1}$, embedded into $H_\rho$ so that $H_\rho'$ is the derived group. 
 Determinant maps $\RU_{1,1}$ to $\RU_1$ and we can choose a splitting to 
 write $H_\rho^\flat$ as the internal semidirect product of $H_\rho'$ and a
 subgroup isomorphic to $\RU_1.$ 
Factoring the Haar measure on $H_\rho^\flat$ accordingly shows that
 any representation distinguished with respect to $H_\rho'$ must also support the
period 
$$\varphi \mapsto
\iq{H_\rho^\flat}\varphi(h)\eta^{-1}(\det h)\ dh$$
for some character $\eta$ of $\RU_1(\A)$ (in which case we say that $\pi$ is $(H_\rho^\flat,\eta)$-distinguished).
It is proved in \cite{GeRoSo2} that this forces the $L$-packet $\{\!\{ \pi \}\! \}$ of $\pi$ to 
be 
 the image, under the endoscopic transfer(s)
 constructed in \cite{RogawskiBook}, 
 of some 
$L$-packet $\rho = \rho_2\times \rho_1$ of 
$\RU_{1,1} \times \RU_1,$ 
(where $\rho_2$ is an $L$-packet of representations of $\RU_{1,1}(\A)$ and $\rho_1$ is a character of $\RU_1(\A) \approx \A_E^1$).
(The $L$-homomorphism to which this transfer is attached depends on some choices; varying the choice
 varies the packet $\rho_2$ such that $\rho_2\times \rho_1$ lifts to $\{\!\{\pi\}\!\},$ without changing the overall image of transfer.)
It is then proved 
in \cite{GeRoSo1} that if $\pi$ is $(H_\rho^\flat,\eta)$-distinguished, then at least one of the $L$-packets $\rho=\rho_2\times \rho_1$ 
which transfers to $\{\!\{\pi\}\!\}$ satisfies $\rho_1 = \eta.$ 

Thus, any pole of $L^S(s, \pi, \Ad'\times \chi)$ at $s=1,$ when $\chi$ is either trivial or quadratic, indicates that $\pi$ is endoscopic.
In order to complete our treatment of the nonsplit case, we would like to address the case when $\chi$ is cubic, and to 
study poles in the case when $\pi$ is assumed to be endoscopic. Further investigation requires that we study 
Rogawski's transfer(s) in more detail.

\subsection{Weil forms of \texorpdfstring{$L$}{Lg}-groups}

\subsubsection{A technical point regarding \texorpdfstring{$L$}{Lg}-groups}\label{ss: tech pt}
For purposes of discussing Rogawski's transfer(s) the finite Galois form of the $L$-group will not suffice; 
we must consider the Weil form. We briefly explain the reason. 
We may realize the finite Galois form of ${}^L\RU_{2,1}$ as in \cite{H12} and \cite{H18}, as
$\GL_3(\C) \rtimes \Gal(E/F),$ with the nontrivial element of $\Gal(E/F)$ acting 
by $g\mapsto {}_tg^{-1}.$ Here $_tg=J{}^tgJ,$ as in \cite{H12,H18}.
This automorphism of $\GL_3(\C)$ 
preserves the subgroup 
subgroup\begin{equation}
\label{GL2 x GL1 --> GL3}
\left \{ 
\bpm a& &b \\ &t& \\ c&&d \epm 
\right \} \cong \GL_2(\C) \times \GL_1(\C) \subset \GL_3(\C)
\end{equation}
 of $\GL_3(\C),$ and hence we 
 obtain a subgroup 
 of $^L\RU_{2,1}$ which is
 the semidirect product 
 of $\GL_2(\C) \times \GL_1(\C)$ and  $\Gal(E/F).$

The finite Galois form of the $L$-group of $\RU_{1,1}\times \RU_1$ is also a semidirect product of $\GL_2(\C) \times \GL_1(\C)$ and  $\Gal(E/F),$
but it may not be identified
with this subgroup of $^L\RU_{2,1}.$
Indeed, the nontrivial element of  $\Gal(E/F)$ acts on $^L(\RU_{1,1}\times \RU_1)$
by an automorphism of order two.
As described in \cite{BorelCorvallis}, this automorphism must
satisfy certain conditions. The identity component of $^L(\RU_{1,1}\times \RU_1)$ inherits, from its definition via based root data, a choice of split maximal torus and Borel subgroup. We may fix the isomorphism  $^L(\RU_{1,1}\times \RU_1)^0\to \GL_2(\C) \times \GL_1(\C)$ so that they consist of the diagonal and upper triangular elements, respectively.
The automorphism must
map these to themselves
in a manner determined by duality and the action of $\Gal(\ol F/F)$ on the maximal torus of $\RU_{1,1} \times \RU_1.$ 
The last condition, given in \cite{BorelCorvallis}, 1.2, states that 
for a general reductive group
there must be a set $\{ x_\alpha\}$ of representatives 
for the root subgroups attached to the simple roots, whose elements are permuted 
amongst themselves.
In our case, where there is only one simple root, this last condition means that the action on the standard maximal unipotent subgroup of 
$\GL_2(\C)$ must be trivial.
Since $g \mapsto {}_tg^{-1}$ 
does not act trivially on the 
maximal unipotent subgroup of 
\eqref{GL2 x GL1 --> GL3}
it follows that the semidirect 
product of $\GL_2(\C) \times \GL_1(\C)$ and  $\Gal(E/F),$
which sits naturally inside of $^L\RU_{2,1},$
may not  be identified with $^L(\RU_{1,1} \times \RU_1).$ 

\subsubsection{Definition of the Weil forms of the $L$-group}

The Weil form of the $L$-group of $\RU_{2,1}$ is the semidirect product of $\GL_3(\C)$ and the Weil group, 
$W_F$ of $F,$ (see \cite{Tate-NumberTheoreticBackground}) with the action implicit in the semidirect 
product being defined using the canonical mapping $W_F/W_E \to \Gal(E/F).$
Thus, elements of $W_E$ commute with $\GL_3(\C)$ while elements of $W_F\smallsetminus W_E$ act by $g \mapsto {}_tg^{-1}.$
The  Weil form of the $L$-group of $\RU_{1,1} \times \RU_1$ is defined similarly with 
$\GL_2(\C) \times \GL_1(\C)$ replacing $\GL_3(\C).$ 
We may  take the involution of  $\GL_2(\C) \times \GL_1(\C)$ to 
be
$$
(g, a) \mapsto \left( \bpm & 1 \\ -1& \epm {}^tg^{-1} \bpm & -1 \\ 1& \epm, a^{-1}\right)
= (\det g^{-1} \cdot g, a^{-1} ).
$$

 \subsubsection{Weil forms and the Satake parametrization}
 We recall the parametrization of unramified representations in \cite{BorelCorvallis} 
 which is suited to dealing with Weil forms of $L$-groups. 
 It is based on $L$-homomorphisms from the Weil group $W_F \to {}^L\!G.$ 
 We do not need the full definition of  an $L$-homomorphism, only 
 the  notion of an {\it unramified} $L$-homomorphism of $W_F$ for $F$ local 
 nonarchimedean. 

 In this case $W_F$ is a dense subgroup of $\Gal(\ol F/F)$ and comes equipped with a 
 homomorphism $\ord_{W_F}: W_F \to \Z.$ 
An unramified $L$-homomorphism $W_F \to G^\vee(\C) \rtimes W_F$ is a homomorphism 
which sends $w \in W_F$ 
to $(t^{\ord_{W_F}(w)}, w)$ for some semisimple element $t$ of $G^\vee(\C).$ 
Note that $t$ must be $\Gal(\ol F/ F)$-fixed in order for this to be a homomorphism, 
and, since we work up to conjugacy, we may assume $t$ is in $T^\vee(\C).$ 
Thus conjugacy classes of  unramified $L$-homomorphisms are in bijection with Galois-fixed elements of $T^\vee(\C),$ 
which correspond to unramified characters as usual.
 
 \subsubsection{Weil forms and $L$-functions}
 Let $F$ be nonarchimedean and local.
 We briefly recall the definition of $L(s, \varphi)$ for 
 $\varphi: W_F \to \GL(V)$ a finite dimensional representation of $W_F.$ 

The kernel of $\ord_{W_F}$ is a normal subgroup of $W_F$ called the inertia group. We denote it $I_F.$
As $I_F$ is normal, its fixed subspace $V^{I_F}$ is $W_F$-invariant. 
We define
$$
L(s, \varphi) : =  \det(I - q^{-s} \varphi(w)\big|_{V^{I_F}} )^{-1},
 \quad \text{ for } w \in W_F \text{ with }\ord_{W_F}(w) = 1.
$$
(The expression on the right-hand-side is independent of the choice of $w$.)
 
This then permits us to define $L(s, \pi, r)$ for $\pi$ an unramified representation 
and $r$ a finite dimensional representation of $G^\vee(\C) \rtimes W_F.$ 
Indeed $\pi$ is attached to an unramified $L$-homomorphism 
$\varphi_t(w) = (t^{\ord_{W_F}(w)}, w),$ with $t \in T^\vee(\C)^{W_F},$
and $L(s, \pi, r)$ is defined as 
$$L(s, \pi, r)=L(s, r \circ \varphi_t)= \det(I - q^{-s} r(t)r(w)\big|_{V^{I_F}} )^{-1},\quad \text{ for } w \in W_F \text{ with }\ord_{W_F}(w) = 1.$$

\subsection{Adjoint representations}
For $H=\RU_{2,1}$ (resp. $\RU_{1,1}$) the 
action of ${}^L\!H$ on itself by conjugation determines an action 
on $\f{sl}_3(\C)$ (resp. $\f{sl}_2(\C)$) which we denote $\Ad.$ Since 
each $w \in W_F \ssm W_E$ acts on $\GL_3(\C)$ by $g \mapsto {}_tg^{-1},$
it will act on $\f{sl}_3(\C)$ by 
$X \mapsto -{}_tX.$ 
Note that this 
conflicts with the notation of \cite{H12}.

Let $\Ad'$ denote the representation 
where $\GL_3(\C)$ acts by 
conjugation and each $w \in W_F \ssm W_E$ acts by $X \mapsto {}_tX.$
This is the representation denoted $\Ad$ in \cite{H12}.
It can also be described as the twist of $\Ad$ by the quadratic character $\chi_{E/F}$ attached to $E/F.$

\subsection{Base Change and Automorphic Induction}
Base change for the quadratic extension $E/F$ is the functorial lifting attached to the $L$-homomorphism 
$$
\bc:{}^L(\GL_n) = \GL_n(\C) \times W_F \to {}^L(\Res_{E/F} \GL_n) = (\GL_n(\C) \times \GL_n(\C) ) \rtimes W_F
$$
which sends $w \in W_F$ to itself, and $g \in \GL_n(\C)$ to $(g,g) \in (\GL_n(\C) \times \GL_n(\C) ).$
Automorphic induction for the quadratic extension $E/F$ is the functorial lifting attached to the $L$-homomorphism 
$$\AI: {}^L(\Res_{E/F} \GL_n) = (\GL_n(\C) \times \GL_n(\C) ) \rtimes W_F
\to ^L(\GL_{2n})= \GL_{2n}(\C) \times W_F$$
$$
\AI(g_1, g_2) = \bpm g_1&\\ & g_2 \epm, \qquad \AI(w) = \begin{cases}\bpm &I_n \\ I_n& \epm, & w \in W_F \ssm W_E\\ I_{2n} & w \in W_E.
\end{cases}
$$
Both of these cases of functoriality are proved (in greater generality) in \cite{ArthurClozel}.

\subsection{Stable Base Change and its image}
The stable base change lifting of Kim and Krishnamurthy has already been mentioned a couple of times. It 
lifts globally generic automorphic representations of the quasisplit group $\RU_n$ (Elsewhere in the paper, 
we denoted $\RU_3$ by $\RU_{2,1}$ and $\RU_2$ by $\RU_{1,1}$ to emphasize the quasisplit nature. In the case of general $n$ this notation seems cumbersome.) attached to a quadratic extension $E/F$ 
to automorphic representations of $\Res_{E/F} \GL_n.$ The $L$-group of $\RU_n$ is $\GL_n(\C) \rtimes W_F,$
and $W_F\ssm W_E$ acts on $\GL_n(\C)$ by a nontrivial involution which we denote $g \mapsto g^*.$
There is some freedom to the choice of involution by it must preserve the torus and the borel and permute a collection of elements
$\{ x_\alpha\}$ as in \cite[\S 1.2]{BorelCorvallis}. We can 
 take $g^* = {}_tg^{-1}$ when $n$ is odd but not when $n$ is even (cf. \S \ref{ss: tech pt}). In the even case 
 we can take $g^* = d_0{}_tg^{-1} d_0,$ where $d_0$ is a diagonal matrix with alternating $1$'s and $-1$'s on the diagonal.
 
The $L$-group of $\Res_{E/F}\GL_n$ is $(\GL_n(\C) \times \GL_n(\C) ) \rtimes W_F,$ and $W_F\ssm W_E$ acts 
by permuting the factors. 
Stable base change is the functorial lifting which 
corresponds to the $L$-homomorphism 
$$
\sbc(g,w)= (g, {}_tg^{-1}, w).
$$
It is closely related to the lifting from $\RU_n$ to $\Res_{E/F}\RU_n$ which is considered in \cite{RogawskiBook} (where it is 
called ``base change'').
\subsubsection{Asai representations}
The representation of $\GL_n(\C) \times \GL_n(\C)$ on $\Mat_{n\times n}(\C)$ by 
$(g_1, g_2).X = g_1X{}_tg_2$ is irreducible. Thus it has two distinct extensions to 
a representation of $(\GL_n(\C) \times \GL_n(\C))\rtimes \Gal(E/F),$ 
the finite Galois form of the $L$-group of $\Res_{E/F}\GL_n.$ We denote them 
$\Asai^\pm,$ such that $\Asai^{\pm}(w).X = \pm {}_tX$ for $w \in W_F \smallsetminus W_E.$

It is perhaps more conventional to define the Asai representation using the usual transpose 
as opposed to the lower transpose. The two representations are isomorphic, with an 
isomorphism given by $X \mapsto XJ.$ In the usual Asai representation, the sign is
$+.$ 
\subsubsection{Asai of stable base change}
It is readily verified that
$$
\Asai^+ \circ \sbc =\begin{cases} \Ad' \oplus \mathbf{1}, n \text{ odd,}\\
\Ad \oplus \chi_{E/F}, n \text{ even},
\end{cases}
\qquad 
\Asai^- \circ \sbc = \begin{cases}\Ad \oplus \chi_{E/F},& n \text{ odd,}\\
\Ad'\oplus \mathbf{1}, & n \text{ even}
\end{cases}
$$
where $\mathbf{1}$ is the one dimensional trivial representation, and $\Ad$ and $\Ad'$ are defined as in \cite{H18}.
Thus if we let $(-)^k=+$ for $k$ even and $-$ for $k$ odd, then 
\begin{equation}
\label{eq relating Ad,Ad' Asai for U21}
L^S( s, \sbc(\pi), \Asai^{(-)^n}) =L^S(s, \chi_{E/F}) L^S(s, \pi, \Ad) \qquad
L^S( s, \sbc(\pi), \Asai^{(-)^{n+1}}) =  \zeta^S(s) L^S(s, \pi, \Ad').
\end{equation}
\subsubsection{Image of stable base change}
\label{ss:im sbc}
If $\pi$ is a globally generic automorphic representations of $U_n,$
we denote the stable base change lift by $\sbc(\pi).$ 
The image of this lifting has been characterized in \cite{GRSBook}, Theorem 11.2. 
\begin{thm}[Ginzburg-Rallis-Soudry]\label{thm: image of sbc}
An automorphic representation of $\GL_n(\A_E)$ is in the image of $\sbc$ if and only if it is an isobaric sum 
of distinct cuspidal representations $\tau_1 \boxplus \dots \boxplus \tau_r$ of $\GL_{n_1}(\A_E), 
\dots \GL_{n_2}(\A_E)$ such that $L^S(s, \tau_i, \Asai^{(-)^n+1})$ has a pole at $s=1$ for all $i$ (this pole is 
necessarily simple).
\end{thm}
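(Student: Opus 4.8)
This statement is a reformulation of \cite[Theorem 11.2]{GRSBook}, and the plan is to quote it while matching the conventions used there to ours. The first task is to check that our $\sbc$, defined in \S\ref{ss:im sbc} via the $L$-homomorphism $\sbc(g,w) = (g, {}_tg^{-1}, w),$ agrees at each place $v \notin S$ with the local functorial lift attached to that $L$-homomorphism; granting this --- it is immediate from the unramified parametrization recalled above --- the Satake parameter of $\sbc(\pi)_v$ is determined by that of $\pi_v,$ and in particular $\sbc(\pi)$ is conjugate-self-dual, i.e.\ $\sbc(\pi)^\sigma \cong \widetilde{\sbc(\pi)}$ where $\sigma$ generates $\Gal(E/F).$ This is exactly the normalization under which Ginzburg--Rallis--Soudry state their theorem. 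To match the Asai sign I would invoke the computations of the subsubsection above: $\Asai^{(-)^{n+1}} \circ \sbc = \mathbf{1}\oplus \Ad',$ so by \eqref{eq relating Ad,Ad' Asai for U21} the $L$-function $L^S(s, \sbc(\pi), \Asai^{(-)^{n+1}}) = \zeta^S(s)\,L^S(s, \pi, \Ad')$ always has a pole at $s=1;$ this identifies $\Asai^{(-)^{n+1}}$ as the Asai representation appearing in the pole condition of the theorem, with the sign the same for all summands.

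For the ``only if'' direction, write $\Pi = \sbc(\pi) = \tau_1 \boxplus \cdots \boxplus \tau_r$ with $\tau_i$ cuspidal on $\GL_{n_i}(\A_E)$ and $\sum n_i = n.$ Conjugate-self-duality of $\Pi$ and uniqueness of the isobaric decomposition make $\{\tau_i\}$ stable under $\tau \mapsto \widetilde{\tau}{}^{\sigma};$ the finer statements --- that each $\tau_i$ is itself conjugate-self-dual of the single ``type'' $(-)^{n+1}$, and that the $\tau_i$ are pairwise distinct --- are part of \cite[Theorem 11.2]{GRSBook}, which I would quote. The mechanism is transparent: the factorization $L^S(s, \Pi, \Asai^+)\,L^S(s, \Pi, \Asai^-) = L^S(s, \Pi \times \Pi^\sigma)$ writes the left side as a product of $\Asai^\pm$-factors of the individual $\tau_i$ together with Rankin--Selberg factors $L^S(s, \tau_i \times \widetilde{\tau_j}{}^\sigma)$ for $i \ne j,$ and comparing orders of poles at $s = 1$ on the two sides --- using \eqref{eq relating Ad,Ad' Asai for U21} to pin down which side carries the pole, and the known bound on the order of the pole of a twisted adjoint $L$-function --- forces both the distinctness of the $\tau_i$ and the fact that each $\tau_i$ carries an $\Asai^{(-)^{n+1}}$-pole rather than an $\Asai^{(-)^{n}}$-pole.

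For the ``if'' direction I would invoke the automorphic descent construction of \cite{GRSBook}: from an isobaric sum $\tau_1 \boxplus \cdots \boxplus \tau_r$ of distinct cuspidal representations, each conjugate-self-dual with $L^S(s, \tau_i, \Asai^{(-)^{n+1}})$ having a (necessarily simple) pole at $s = 1,$ Ginzburg--Rallis--Soudry construct, out of the associated residual Eisenstein data, a nonzero globally generic cuspidal representation $\sigma$ of the quasisplit $\RU_n(\A)$ with $\sbc(\sigma) = \tau_1 \boxplus \cdots \boxplus \tau_r;$ the hypotheses of distinctness and of the precise Asai-pole normalization are exactly what guarantee that the relevant Eisenstein series has the required pole and that its residue is a nonzero cuspidal representation. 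The main obstacle is this construction itself --- the descent from $\GL$ over $E$ to the quasisplit unitary group and the analysis of the residual spectrum that underlies it --- which is the technical core of \cite[Ch.\ 11]{GRSBook}; I would not reproduce it, but would cite the precise statements used and verify that their hypotheses (conjugate-self-duality, the Asai-pole normalization, distinctness of summands) coincide with ours under the identifications of \S\ref{ss:im sbc} and the sign computations recorded in \eqref{eq relating Ad,Ad' Asai for U21}.
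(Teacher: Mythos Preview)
Your proposal is correct and aligns with the paper's treatment: the paper does not prove this theorem at all but simply quotes it as \cite[Theorem 11.2]{GRSBook}, so your plan of citing that result and checking that the conventions (the $L$-homomorphism defining $\sbc$, the sign of the Asai representation via \eqref{eq relating Ad,Ad' Asai for U21}) match is exactly right and indeed more detailed than what the paper does. Your sketch of the mechanism behind both directions is accurate and unnecessary for the purposes of the paper, but harmless.
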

\begin{rmk}
{\rm As explained in \cite{GRSBook}, $L^S(s, \pi, \Asai^+) L^S(s, \pi, \Asai^-)$ is the partial Rankin-Selberg convolution 
$L$-function $L^S(s, \pi \times \pi\circ \Fr)$ of $\pi$ and the representation obtained by composing $\pi$ with the nontrivial 
element $\Fr$ of $\Gal(E/F).$ It follows that at most one of  $L^S(s, \pi, \Asai^+),$ and  $L^S(s, \pi, \Asai^-)$ may have a 
pole at $s=1.$ Moreover, since both are nonvanishing on the line $\Re(s)=1$ by theorem 5.1 of \cite{Sha81}, it follows that if either 
has a pole then $\pi\circ \Fr = \wt \pi.$ This implies that the central character of $\pi$ is trivial on $\A^\times \subset \A_E^\times.$ 
Finally, if $\wt \pi \cong \pi \circ \Fr,$ then either $\Asai^+$ or $\Asai^-$ has a pole.}
\end{rmk}

 \subsection{Rogawski's liftings}

\subsubsection{Two families of $L$-homomorphisms}
We describe two families of $L$-homomorphisms  introduced in \cite{RogawskiBook}.
\begin{prop}[Rogawski, pp. 52-53]
Fix $\mu: E^\times \bs \A_E^\times \to \C^\times$ satisfying $\mu|_{\A_F^\times} = \chi_{E/F},$
and fix $w_0 \in W_F \smallsetminus W_E.$ 
There is a unique $L$-homomorphism $\xi^{(2,1)}_{w_0, \mu}: {}^L(\RU_{1,1} \times \RU_1)\to {}^L\RU_{2,1}$ such that
$$
\xi^{(2,1)}_{w_0, \mu}\left(  \bpm a&b\\ c&d \epm, t \right) 
= \bpm a&&b\\&t&\\c&&d \epm,\qquad \xi_{w_0, \mu}(w_0) = 
\bpm 1&&\\&1&\\&&-1 \epm w_0, 
$$ and 
$$
\xi^{(2,1)}_{w_0, \mu}(w) = 
\bpm \mu(w)&&\\&1&\\&&\mu(w) \epm w, \qquad  w \in W_E.$$
There is a unique $L$-homomorphism 
$\xi_{w_0, \mu}^{(1,1)}:{}^L(\RU_1\times \RU_1 ) \to {}^L\RU_{1,1}$ such that 
$$
\xi^{(1,1)}_{w_0, \mu}\left(  a,b \right) 
= \bpm a&\\&b \epm,\qquad \xi_{w_0, \mu}(w_0) = 
\bpm &-1\\1& \epm w_0, 
$$ and 
$$
\xi^{(1,1)}_{w_0, \mu}(w) = 
\bpm \mu^{-1}(w)&\\&\mu^{-1}(w) \epm w, \qquad  w \in W_E.$$
\end{prop}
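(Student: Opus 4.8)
The plan is to verify directly that the displayed formulas extend to genuine $L$-homomorphisms; uniqueness is then automatic, since $W_F$ is topologically generated by $W_E$ together with the fixed $w_0 \in W_F \ssm W_E,$ so the prescribed values on $\GL_2(\C)\times\GL_1(\C),$ on $W_E,$ and on $w_0$ already pin down $\xi := \xi^{(2,1)}_{w_0,\mu}$ (resp. $\xi^{(1,1)}_{w_0,\mu}$) completely. An $L$-homomorphism is a continuous homomorphism commuting with the projections to $W_F$ and carrying the identity component to the identity component; continuity and this last condition are evident from the formulas (all matrices occurring are semisimple), so the whole content is consistency, i.e., that the assignment is a homomorphism. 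For $\xi=\xi^{(2,1)}_{w_0,\mu}$ this reduces to four checks: (i) $w\mapsto \diag(\mu(w),1,\mu(w))\,w$ is a homomorphism $W_E \to {}^L\RU_{2,1},$ which is immediate because $\mu$ is a character and $W_E$ acts trivially on $\GL_3(\C);$ (ii) conjugation by $\xi(w_0)=\diag(1,1,-1)\,w_0$ of the block-embedded copy of $\GL_2(\C)\times\GL_1(\C)$ realizes the involution $(g,t)\mapsto(\det(g)^{-1}g,\,t^{-1})$ built into ${}^L(\RU_{1,1}\times\RU_1);$ (iii) $\xi(w_0)\,\xi(w)\,\xi(w_0)^{-1}=\xi(w_0 w w_0^{-1})$ for all $w\in W_E;$ and (iv) $\xi(w_0)^2=\xi(w_0^2).$

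Checks (i) and (ii) are routine computations in $\GL_3(\C)$ using ${}_tX = J\,{}^tX\,J$: for (ii) one verifies that conjugating $\,{}_tB^{-1}$ by $\diag(1,1,-1),$ where $B$ is the block embedding of $(g,t),$ returns the block embedding of $(\det(g)^{-1}g,\,t^{-1}),$ and that $\diag(1,1,-1)\cdot{}_t\diag(1,1,-1)^{-1}=\diag(-1,1,-1).$ The only input beyond linear algebra is class field theory, and it enters precisely through the hypothesis $\mu|_{\A_F^\times}=\chi_{E/F}.$ For (iii) the identity reduces to $\mu^\sigma=\mu^{-1},$ where $\sigma$ generates $\Gal(E/F);$ this holds because $\mu(x)\mu(\sigma x)=\mu(N_{E/F}x)=\chi_{E/F}(N_{E/F}x)=1$ for $x\in\A_E^\times,$ since $\chi_{E/F}$ kills norms from $E.$ For (iv) a short computation gives $\xi(w_0)^2=\diag(-1,1,-1)\,w_0^2,$ so one must show $\mu(w_0^2)=-1;$ here one uses that the class of $w_0^2$ in $W_E^{\ab}$ is the image of the class of $w_0$ under the transfer (Verlagerung) map $W_F^{\ab}\to W_E^{\ab},$ which corresponds under class field theory to the inclusion $\A_F^\times\into\A_E^\times,$ whence $\mu(w_0^2)=(\mu|_{\A_F^\times})(w_0)=\chi_{E/F}(w_0)=-1,$ since $w_0\notin W_E=\ker\chi_{E/F}.$

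The construction of $\xi^{(1,1)}_{w_0,\mu}:{}^L(\RU_1\times\RU_1)\to{}^L\RU_{1,1}$ is entirely parallel: one replaces $\diag(1,1,-1)\,w_0$ by $\bpm & -1\\ 1& \epm w_0,$ replaces the block embedding of $\GL_2(\C)\times\GL_1(\C)$ by $\diag(a,b),$ and uses the involution $g\mapsto\det(g)^{-1}g$ of $\GL_2(\C);$ the nontrivial points again come down to $\mu^\sigma=\mu^{-1}$ and $\mu(w_0^2)=-1,$ the latter because $\bpm & -1\\ 1& \epm^2=-I_2.$ I expect step (iv) to be the only one requiring genuine care rather than bookkeeping --- specifically the class-field-theoretic identity $\mu(w_0^2)=\chi_{E/F}(w_0)=-1,$ which is exactly what the normalization $\mu|_{\A_F^\times}=\chi_{E/F}$ is designed to deliver and which accounts for the sign $-1$ appearing in $\xi_{w_0,\mu}(w_0)$ in both families. (All of this is due to Rogawski, \cite{RogawskiBook}, pp.~52--53; the above is simply the verification on which it rests.)
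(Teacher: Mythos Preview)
The paper does not supply its own proof of this proposition: it is stated with the attribution ``[Rogawski, pp.~52--53]'' and immediately followed by the next subsection, so there is nothing to compare against. Your verification is correct and is, in essence, the argument one finds in Rogawski's book. The reduction to the four consistency checks (i)--(iv) is sound because $W_F = W_E \sqcup w_0 W_E$, so a homomorphism on ${}^L(\RU_{1,1}\times\RU_1)$ is determined by its restriction to the identity component, its restriction to $W_E$, and its value at $w_0$; the checks you list are exactly what is needed for the extension $\xi(w_0 w):=\xi(w_0)\xi(w)$ to be multiplicative and to intertwine the $W_F$-actions on the two identity components. Your computations in (ii) and (iii) are correct, and you have correctly identified (iv) as the crux: the identity $\mu(w_0^2)=-1$ follows from the compatibility of the transfer map $W_F^{\ab}\to W_E^{\ab}$ with the inclusion $C_F\hookrightarrow C_E$ under the reciprocity isomorphisms, together with the fact that $\chi_{E/F}$, viewed as a character of $W_F^{\ab}$, is precisely the sign character of $W_F/W_E$. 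This is indeed the reason for the hypothesis $\mu|_{\A_F^\times}=\chi_{E/F}$ and for the signs in $\xi_{w_0,\mu}(w_0)$.
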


\subsubsection{Rogawski's Liftings}
Each of the $L$-homomorphisms $\xi^{(2,1)}_{w_0, \mu}$ determines a conjectural functorial transfer map, taking 
automorphic $L$-packets on $\RU_{1,1}(\A) \times \RU_1(\A)$ to automorphic $L$-packets on $U_{2,1}(\A).$ 
Likewise, each of the $L$-homomorphisms $\xi^{(1,1)}_{w_0, \mu}$ determines a conjectural functorial transfer map, taking 
automorphic $L$-packets on $\RU_{1}(\A) \times \RU_1(\A)$ to automorphic $L$-packets on $\RU_{1,1}(\A).$ 
The existence of these transfer maps is proved in \cite{RogawskiBook}.
Varying the choice of $w_0$ does not change the transfer mapping. Varying the choice of $\mu$ permutes the elements 
of the image. 

An automorphic representation of $\RU_{1,1}(\A)$ or of $\RU_{2,1}(\A)$ is said to be endoscopic if it is in the image of the 
Rogawski liftings.

Applying the lifting attached to $\xi^{(2,1)}_{w_0, \mu}$ to a packet obtained from $\xi^{(1,1)}_{w_0, \mu}$
gives a packet on $\RU_{2,1}(\A).$ This construction is functorial. We describe the associated $L$-homomorphism.
 
Define 
$$
\xi^{(1,1,1)}(a,b,c) = \bpm a&&\\&b&\\&&c \epm, 
\qquad 
\xi^{(1,1,1)}(w) = m(w) w, \qquad m(w) :=\begin{cases} 
I, w \in W_E, \\J, w \notin W_E.
\end{cases}
$$
(Here $I$ is the identity matrix and $J$ is the matrix with ones on the diagonal from top right to 
lower left and zeros elsewhere.)
\begin{lem}
Take $\eta_1, \eta_2, \eta_3$ three automorphic characters of $\RU_1(\A).$
Let $\pi_1$ be the representation of $\RU_{1,1}(\A)$ obtained from $\eta_1 \otimes \eta_3$ using the Rogawski lifting attached to $\xi^{(1,1)}_{w_0, \mu}$ and let $\pi$ be the representation of $\RU_{2,1}(\A)$ obtained from $\pi_1\otimes \eta_2$ using the Rogawski lifting attached to $\xi^{(2,1)}_{w_0, \mu}.$ Then $\pi$ is the weak functorial lift of $\eta_1\otimes \eta_2\otimes \eta_3$ relative to the $L$-homomorphism $\xi^{(1,1,1)}.$
\end{lem}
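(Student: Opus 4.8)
The plan is to deduce the lemma from the transitivity of weak functorial lifting together with a short computation identifying a composite of $L$-homomorphisms. Recall that an automorphic representation $\Pi$ of $G$ is a weak functorial lift of $\sigma$ relative to an $L$-homomorphism $\xi\colon {}^L G' \to {}^L G$ exactly when, for almost all places $v$, $\Pi_v$ is unramified and its Satake parameter is the $\xi_v$-image of the Satake parameter of $\sigma_v$; such lifts therefore compose, at the level of $L$-packets. Since Rogawski's liftings (whose existence is recalled above from \cite{RogawskiBook}) are functorial with respect to $\xi^{(1,1)}_{w_0,\mu}$ and $\xi^{(2,1)}_{w_0,\mu}$ respectively, they are in particular weakly functorial at almost all places; hence the chosen member $\pi$ of the transferred packet is a weak functorial lift of $\eta_1\otimes\eta_2\otimes\eta_3$ relative to the composite $L$-homomorphism $\Xi:=\xi^{(2,1)}_{w_0,\mu}\circ(\xi^{(1,1)}_{w_0,\mu}\times\mathrm{id}_{{}^L\RU_1})$.

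It then remains to check that $\Xi=\xi^{(1,1,1)}$ (for the same auxiliary data $w_0,\mu$). An $L$-homomorphism out of ${}^L(\RU_1\times\RU_1\times\RU_1)=(\C^\times)^3\rtimes W_F$ is determined by its restriction to the identity component together with its values on $W_E$ and on a single $w_0\in W_F\smallsetminus W_E$, so it suffices to compare the two maps there. On the identity component both send the diagonal triple to the corresponding diagonal element of $\GL_3(\C)$, via the embedding of $\GL_2(\C)\times\GL_1(\C)$ into $\GL_3(\C)$ fixed in \S\ref{ss: tech pt}. On $w\in W_E$ the factor $\diag(\mu^{-1}(w),\mu^{-1}(w))$ built into $\xi^{(1,1)}_{w_0,\mu}(w)$ is carried into $\GL_3(\C)$ and then multiplied by the factor $\diag(\mu(w),1,\mu(w))$ built into $\xi^{(2,1)}_{w_0,\mu}(w)$, so the $\mu$-twists cancel and $\Xi(w)=1\rtimes w$, which agrees with $\xi^{(1,1,1)}(w)$ because the cocycle $m(w)$ is the identity for $w\in W_E$. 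On $w_0$ one embeds the matrix $\bsm & -1\\ 1& \esm$ appearing in $\xi^{(1,1)}_{w_0,\mu}(w_0)$ into $\GL_3(\C)$ and multiplies by $\diag(1,1,-1)$ coming from $\xi^{(2,1)}_{w_0,\mu}(w_0)$; the product is exactly the antidiagonal matrix $J$, matching $m(w_0)=J$ in $\xi^{(1,1,1)}(w_0)$. Hence $\Xi=\xi^{(1,1,1)}$.

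Combining the two steps, at almost all $v$ the Satake parameter of $\pi_v$ is the $\xi^{(1,1,1)}_v$-image of the Satake parameter of $(\eta_1\otimes\eta_2\otimes\eta_3)_v$, which is the assertion. The only genuine work is bookkeeping inside the Weil forms of the $L$-groups as set up in \S\ref{ss: tech pt}: one must track the semidirect-product structure carefully enough to see both the cancellation of the $\mu^{\mp1}$ twists over $W_E$ and the absorption of the sign matrix $\diag(1,1,-1)$ at $w_0$. I expect this to be the main, but entirely routine, obstacle; the argument uses nothing beyond the existence of Rogawski's transfers, which has already been invoked, and makes no use of $\chi$ or of the Eisenstein-series constructions of the earlier sections.
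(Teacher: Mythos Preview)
Your proof is correct and is essentially a cleaner repackaging of the paper's argument. The paper computes Satake parameters directly at each unramified finite place $v$, treating the split case (where the $\mu^{-1}$ from $\xi^{(1,1)}_{w_0,\mu}$ cancels the $\mu$ from $\xi^{(2,1)}_{w_0,\mu}$) and the inert case (where $\bspm &-1\\1&\espm$ and $\diag(1,1,-1)$ combine to $J$) separately; you instead verify the identity $\xi^{(2,1)}_{w_0,\mu}\circ(\xi^{(1,1)}_{w_0,\mu}\times\mathrm{id})=\xi^{(1,1,1)}$ once at the level of $L$-homomorphisms, checking it on the identity component, on $W_E$, and on $w_0$. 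These are literally the same two computations---your $W_E$ step is the paper's split case and your $w_0$ step is the paper's inert case---so the content is identical, but your formulation avoids the split/inert case division and makes the independence from the auxiliary data $(w_0,\mu)$ more transparent.
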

\begin{rmk}
{\rm Note that $\xi_{w_0,\mu}^{(1,1)}$ and 
$\xi_{w_0,\mu}^{(2,1)}$ depend on the choice of $w_0$ and $\mu,$ but $\xi^{(1,1,1)}$ does not.}
\end{rmk}
\begin{proof}
Let $v$ be a finite unramified place. Each of our representations is 
determined by an unramified $L$-homomorphism from $W_{F_v}$ into 
the relevant $L$-group. Fix $w$ an  element of $W_{F_v}$ of norm $1.$ 
Then each  unramified $L$-homomorphism from $W_{F_v}$ is determined by 
its image on $w.$ 
We regard $W_{F_v}$ as a subgroup of $W_F$ by some choice of 
embedding as in \cite{Tate-NumberTheoreticBackground}.
Let us refer to this image as the Satake parameter of the representation. 
At a split place,  the  Satake parameter of $\eta_i$ is $(t_i,w).$ 
From considering the isomorphism $W_F/W_E \cong \Gal(E/F)$ we see that 
$w\in W_E$ if and only if $E$ splits over $v.$ 
When this is not the case, $t_i$ must be $1$ for all $i.$ 

First assume $v$ is split. 
Then the Satake parameter of 
$\pi_1$ is $\bspm t_1\mu^{-1}(w)&\\ & t_3 \mu^{-1}(w) \espm w.$ 
Hence, the Satake parameter of $\pi_1 \otimes \eta_2$ is 
$(\bspm t_1\mu^{-1}(w)&\\ & t_3 \mu^{-1}(w) \espm , \ t_2)w,$ and that of $\pi$ is 
$$
\bpm t_1\mu^{-1}(w)&&\\&t_2&\\ && t_3 \mu^{-1}(w) \epm
\bpm \mu(w) &&\\&1& \\ && \mu(w) \epm 
 w
 = \bpm t_1&&\\&t_2&\\ && t_3 \epm
 w.
$$

At an inert place,  the Satake parameter of 
$\pi_1$ is $\bspm &-1\\ 1& \espm w,$ that of $\pi_1\times \eta_2$ is 
$(\bspm &-1\\ 1& \espm, 1)w,$ and that of $\pi$ is 
$$
\bpm &&-1\\ &1&\\1&& \epm \bpm 1&&\\&1&\\&&-1 \epm w
= Jw.
$$
\end{proof}
For $\pi_1$ a cuspidal representation of $\RU_{1,1}(\A)$ and $\eta$ a character of $\RU_1(\A)$ we denote the corresponding 
Rogawski lift by $\Rog^{(2,1)}_{w_0,\mu}(\pi_1 \otimes \eta).$ The lift attached to three characters $\eta_1, \eta_2, \eta_3$ is denoted
$\Rog^{(1,1,1)}(\eta_1\otimes \eta_2\otimes \eta_3).$
\subsubsection{Rogawski liftings and stable base change}
It's clear from the formulae for $\xi^{(2,1)}_{w_0, \mu}$ and $\xi^{(1,1,1)}$ that 
$$\sbc(\Rog^{(2,1)}(\sigma\otimes \eta)
=(\sbc(\sigma)\otimes \mu) \boxplus \sbc(\eta), 
\quad
\sbc(\Rog^{(1,1,1)}(\eta_1\otimes \eta_2 \otimes \eta_3))=\sbc(\eta_1)\boxplus \sbc(\eta_2)\boxplus \sbc(\eta_3).$$
\subsubsection{Rogawski liftings and descent}
For globally generic representations, the combination of the Kim-Krishnamurthy lifting and the 
method of descent gives an alternate construction of Rogawski's endoscopic liftings, and a generalization. 
A representation is endoscopic if and only if its stable base change is non-cuspidal. In this situation, descent 
may be applied to each 
summand of the stable base change, and the original representation is the endoscopic lift of the 
collection of representations thus obtained. 
See \cite{GRSBook}, section 11.3. 

Recall that the isobaric summands of the stable base change of a cusp form are all distinct. 
From this we may deduce that if $\sbc(\eta_1), \sbc(\eta_2)$ and $\sbc(\eta_3)$ are not distinct, 
then $\Rog^{(1,1,1)}(\eta_1 \otimes \eta_2\otimes \eta_3)$ is not cuspidal.

These endoscopic liftings have also been studied by the trace formula method in \cite{Mok}.

\subsubsection{Adjoint $L$-functions of Rogawski lifts}
We regard $\Ad$  as an action of $\GL_3(\C) \rtimes W_F$ on $\mathfrak{sl}_3\C$ by composing 
with the canonical projection to the finite Galois form. Thus $w \in W_F$ acts by 
$X \mapsto {}-{}_t\,\!X$ if $w \notin W_E$ and trivially if $w \in W_E.$ 
In this section we consider $\Ad \circ \xi^{(2,1)}_{w_0, \mu}: {}^L(\RU_{1,1} \times \RU_1) \to \GL( \mathfrak{sl}_3(\C)).$
Let 
$$
X\left(
\bpm x_1 & x_2 \\ x_3 & - x_1 \epm, z, \bpm u_1\\ u_2 \epm, \bpm v_1&v_2\epm  
\right)
= \bpm x_1-z & u_1 & x_2 \\ v_1 & 2z & v_2\\ x_3 & u_2 & -x_1-z \epm .
$$
Then for $x \in \mathfrak{sl}_2(\C), z \in \C, u \in \Mat_{2\times 1}(\C), v \in \Mat_{1\times 2}(\C),$
we have 
$$\begin{aligned}
\Ad\circ \xi^{(2,1)}_{w_0, \mu}(g,t). X(x, z,u,v) 
&= X( gxg^{-1} , z, gut^{-1}, tvg^{-1}), & (g \in \GL_2(\C), \ t \in \C^\times)
\\
\Ad \circ \xi^{(2,1)}_{w_0, \mu}(w).X(x,z,u,v) 
&= X(x,z, \mu(w) u , \mu(w)^{-1} v),&  (w \in W_E)\\
\Ad\circ \xi^{(2,1)}_{w_0, \mu}(w_0).X(x,z,u,v)
&= X\left(x, -z, \bpm -v_2\\ v_1\epm , \bpm -u_2& u_1 \epm \right).
\end{aligned}
$$
Note that 
$$\bpm -v_2\\ v_1\epm = \bpm -1 &\\ &1 \epm {}_tv, \qquad \qquad
 \bpm -u_2& u_1 \epm = {}_tu \bpm -1 &\\ &1 \epm
$$
\begin{prop}
Take $\pi_1$ an irreducible automorphic representation of $U_{1,1}(\A)$ 
and $\eta$ an irreducible automorphic representation (necessarily a character) of $\RU_1(\A).$ Let $\pi = \pi_1 \otimes \eta.$  Let $S$ be a finite set of places of $F,$ including all archimedean places and all places where either $\pi_1$ or $\eta$ is ramified. Then 
\begin{equation}\label{Ad of rog21}
\begin{aligned} 
L^S( s, \pi, \Ad \circ \xi^{(2,1)}_{w_0, \mu}) 
&= L^S(s, \pi_1, \Ad) L^S(s, \chi_{E/F})
L^S(s, \mu \otimes \AI\sbc(\pi_1\otimes \eta^{-1}))\\
L^S( s, \pi, \Ad' \circ \xi^{(2,1)}_{w_0, \mu}) 
&= L^S(s, \pi_1, \Ad') \zeta^S(s)
L^S(s,\mu \otimes  \AI\sbc(\pi_1\otimes \eta^{-1}) )
\end{aligned}
\end{equation}
\end{prop}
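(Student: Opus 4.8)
The plan is to decompose the representation $\Ad \circ \xi^{(2,1)}_{w_0,\mu}$ of ${}^L(\RU_{1,1} \times \RU_1)$ into a direct sum of irreducible (or at least $L$-function-computable) pieces, and then match each piece with one of the three factors on the right-hand side of \eqref{Ad of rog21}. The explicit formulas for $\Ad \circ \xi^{(2,1)}_{w_0,\mu}$ displayed just above the statement already do most of the work: in the coordinates $X(x,z,u,v)$ with $x \in \mathfrak{sl}_2(\C)$, $z \in \C$, $u \in \Mat_{2\times 1}(\C)$, $v \in \Mat_{1\times 2}(\C)$, the action of $\GL_2(\C) \times \C^\times$ preserves the three summands $\{X(x,0,0,0)\}$, $\{X(0,z,0,0)\}$, and $\{X(0,0,u,v)\}$. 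On the first, the action is the adjoint representation of $\GL_2(\C)$ (this will give $L^S(s,\pi_1,\Ad)$); on the second, the action is trivial and $w_0$ acts by $z \mapsto -z$, i.e. by the sign character $\chi_{E/F}$ (giving $L^S(s,\chi_{E/F})$); on the $(u,v)$-part, the action of $(g,t)$ is $u \mapsto gut^{-1}$, $v \mapsto tvg^{-1}$, which is the four-dimensional representation $\mathrm{std}_2 \otimes t^{-1} \oplus \mathrm{std}_2^\vee \otimes t$, with $w_0$ swapping $u$ and $v$ (up to the $\diag(-1,1)$ twist) and $W_E$ acting by $\mu^{\pm 1}$.

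The first step is thus to verify, working place by place at unramified $v$ with a norm-one Frobenius $w$, that the $(u,v)$-block of $\Ad \circ \xi^{(2,1)}_{w_0,\mu}$ is isomorphic, as a representation of ${}^L(\RU_{1,1}\times \RU_1)$, to $\mu \otimes \AI\bigl(\sbc(\pi_1 \otimes \eta^{-1})\bigr)$ — i.e. to the automorphic induction from $E$ to $F$ of the (already base-changed to $E$) representation attached to $\pi_1 \otimes \eta^{-1}$, then twisted by $\mu$. Concretely: at a split place the Satake parameter of $\pi_1$ is $\diag(a_1,a_2)\rtimes w$ and that of $\eta$ is $c \rtimes w$, so the $(u,v)$-block parameter is $\diag(a_1 c^{-1}, a_2 c^{-1}, a_1^{-1} c, a_2^{-1} c)\cdot \diag(\mu(w),\mu(w),\mu(w)^{-1},\mu(w)^{-1})$, which is exactly the Satake parameter of $\mu \otimes \AI\sbc(\pi_1\otimes \eta^{-1})$ (recall $\sbc$ base-changes $\pi_1\otimes\eta^{-1}$ to $\GL_2 \times \GL_1$ over $E$, and $\AI$ induces back to a $\GL_4$-parameter over $F$); at an inert place one checks that $w_0$ acting by the swap-with-$\diag(-1,1)$-twist induces exactly the $w \notin W_E$ value $\bigl(\begin{smallmatrix}&I_2\\ I_2&\end{smallmatrix}\bigr)$ of $\AI$, up to the $\mu$-twist. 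This is a finite bookkeeping verification using the definitions of $\bc$, $\AI$ in the excerpt and the Satake parametrization recalled there.

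Granting that identification, the Euler product $L^S(s,\pi,\Ad\circ\xi^{(2,1)}_{w_0,\mu})$ factors as the product of the Euler products of the three summands, which are respectively $L^S(s,\pi_1,\Ad)$, $L^S(s,\chi_{E/F})$, and $L^S(s,\mu\otimes\AI\sbc(\pi_1\otimes\eta^{-1}))$; this is the first line of \eqref{Ad of rog21}. The second line follows immediately by twisting by $\chi_{E/F}$: since $\Ad' = \Ad \otimes \chi_{E/F}$ (as recalled in the Adjoint representations subsection), the trivial summand $\{X(0,z,0,0)\}$ becomes the trivial representation again (contributing $\zeta^S(s)$ in place of $L^S(s,\chi_{E/F})$), the $\mathfrak{sl}_2$-summand becomes $\Ad' = \Ad\otimes\chi_{E/F}$ of $\RU_{1,1}$ (contributing $L^S(s,\pi_1,\Ad')$), and the four-dimensional $(u,v)$-summand, already twisted by $\chi_{E/F}$, is unchanged because automorphic induction from $E$ is insensitive to twisting by $\chi_{E/F}$ — more precisely $\chi_{E/F}\otimes\AI(\sigma) \cong \AI(\sigma)$, so the factor $L^S(s,\mu\otimes\AI\sbc(\pi_1\otimes\eta^{-1}))$ survives verbatim. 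The main obstacle is the second step: correctly tracking the interaction of the three layers of functoriality ($\sbc$ to $E$, then $\AI$ back to $F$, then the $\mu$-twist) against the explicit $w_0$- and $W_E$-actions in the displayed formulas, and in particular confirming that the $\diag(-1,1)$ factors appearing in $\bigl(\begin{smallmatrix}-v_2\\ v_1\end{smallmatrix}\bigr) = \bigl(\begin{smallmatrix}-1&\\&1\end{smallmatrix}\bigr){}_tv$ and $\bigl(\begin{smallmatrix}-u_2&u_1\end{smallmatrix}\bigr) = {}_tu\bigl(\begin{smallmatrix}-1&\\&1\end{smallmatrix}\bigr)$ amount only to an inner twist that does not change the isomorphism class of the $(u,v)$-block as an $L$-parameter (equivalently, does not change the Euler product). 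Everything else is routine once this identification is in hand.
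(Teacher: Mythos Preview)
Your proposal is correct and follows essentially the same route as the paper: decompose $\Ad\circ\xi^{(2,1)}_{w_0,\mu}$ into the three invariant pieces $x$, $z$, $(u,v)$ and match local $L$-factors place by place, with the only nontrivial work being the four-dimensional $(u,v)$-block at split and inert primes. The paper handles the inert case by writing down the explicit $4\times 4$ matrix of $r_{w_0,\mu}\circ\varphi_t(w)$ and reading off the factor $(1+t^2q^{-2s})(1+t^{-2}q^{-2s})$ directly (which absorbs your ``$\diag(-1,1)$ is only an inner twist'' concern), and it treats the $\Ad'$ identity as a parallel computation rather than deducing it from the $\Ad$ identity via $\Ad'=\Ad\otimes\chi_{E/F}$ and $\chi_{E/F}\otimes\AI(\sigma)\cong\AI(\sigma)$ as you do---your deduction is a clean shortcut.
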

\begin{proof}
The proofs of the two statements are parallel. We treat only the first. 
From the computations above we see that $\Ad \circ \xi^{(2,1)}_{w_0, \mu}$ is the direct sum of three irreducible 
components, corresponding to the variable $x,$ the variable $z,$ and the pair $(u,v).$ 
These three components give rise to the three factors above: we match local $L$-factors at both 
split and inert unramified finite places. 
We discuss only the third component in detail, as the first two are easier. 
Denote this representation $r_{\mu, w_0}.$ 
If $v$ is split then $\pi_1$ gives a diagonal matrix $\bpm t_1&\\ & t_2 \epm$ 
and $\eta$ gives a nonzero scalar $c.$ 
We take $w \in W_{F_v}$ with $\ord_{W_{F_v}}(w) = 1.$ Since $G$ is split at $v,$
$\Gal( \ol F_v/F_v)$ acts trivially on $T^\vee(\C).$ 
So, the image of $w$ in $W_F$ lies in $W_E.$ With respect to a suitable basis, 
the matrix of the operator
$$r_{\mu, w_0}\left( \left(\bpm t_1&\\ & t_2 \epm, \ c\right), \ w\right)
$$
is the matrix 
$\diag( \mu(w) \frac{t_1} c, \mu(w) \frac{t_2} c, \mu(w)^{-1} \frac{c}{t_1} , \mu(w)^{-1} \frac c{t_2} ).$
Tracing through the definitions, this is exactly the matrix attachd to 
$\mu \otimes \AI\sbc(\pi_1 \otimes \eta^{-1}).$ 

If $v$ is inert then $\pi_1$ gives a diagonal matrix which is stable under the action of the
Galois group, i.e., of the form $\bpm t&\\ &t^{-1} \epm,$ while $\eta_v$ 
(an unramified character of a compact group)
must be trivial.
Write $\varphi_{t}$ for the corresponding unramified $L$-homomorphism 
$W_F \to G^\vee(\C) \rtimes W_F.$ 
For $w \in W_F$ with $\ord_{W_F}(w) = 1,$ we have 
$$
r_{w_0, \mu}\circ \varphi_{t}(w).( u, v) = \left(\mu(ww_0^{-1}) \bpm -t &\\ &t^{-1} \epm {}_tv, \ \ \ 
\mu^{-1}(ww_0^{-1})\ {}_tu \bpm -t^{-1} &\\ &t \epm\right)
$$
Thus, the matrix of the operator $r_{w_0, \mu}\circ \varphi_{t}(w)$ relative to a suitable choice of basis 
is 
$$
\bpm 0&0&0&-\mu(ww_0^{-1})t\\ 0&0& \mu(ww_0^{-1})t^{-1}\\
0&-\mu^{-1}(ww_0^{-1})t^{-1} &0&0\\
\mu^{-1}(ww_0^{-1})t&0&0&0
\epm,
$$
and the relevant $L$-factor is 
$$
(1+t^2 q^{-2s})(1+t^{-2} q^{-2s})
$$

The local $L$-factor for $L(s, \AI \sbc(\pi_1\otimes \eta^{-1}))$ is 
$(1-t^2q^{-2s})(1-t^{-2} q^{-2s}).$
Twisting by $\mu$ flips the signs, as required, because 
 $\mu$ is 
an extension of $\chi_{E/F}.$ 
Thus, $\mu$ 
maps the uniformizer $\varpi_v$ of $F_v$ to $1$ if $v$
splits and $-1$ if $v$ is inert. But for inert $v$ there is a unique completion $E_w$ of $E$ over $F_v$
and it has the same uniformizer.
\end{proof}
The corresponding formulae for the lift from $\RU_1\times \RU_1\times \RU_1$ are similar and proved in the same way.
\begin{prop}
Take $\eta_1, \eta_2,$ and  $\eta_3$ be irreducible automorphic representations of $\RU_1(\A),$ and let $\eta$ denote the representation  $\eta_1 \otimes \eta_2 \otimes \eta_3$ of $\RU_1(\A) \times \RU_1(\A) \times \RU_1(\A).$ Let $S$ be a finite set of places of $F,$ including all archimedean places and all places where  any of $\eta_1, \eta_2,$ and  $\eta_3$ is ramified. Let $T$ denote the set of places of $E$ lying above $S$ and let $\sbc$ denote the stable base change lifting from 
$U_1(\A_F)$ to  $\A_E^\times.$ Then 
\begin{equation}\label{Ad of eta111 lift}
\begin{aligned}
L^S( s, \eta, \Ad \circ \xi^{(1,1,1)}) 
&= L^S(s, \chi_{E/F})^2
L^T(s, \sbc(\frac{\eta_1}{\eta_2}))
L^T(s, \sbc(\frac{\eta_1}{\eta_3}))
L^T(s, \sbc(\frac{\eta_2}{\eta_3}))
\\
L^S( s, \eta, \Ad' \circ \xi^{(1,1,1)}) 
&=  \zeta^S(s)^2
L^T(s, \sbc(\frac{\eta_1}{\eta_2}))
L^T(s, \sbc(\frac{\eta_1}{\eta_3}))
L^T(s, \sbc(\frac{\eta_2}{\eta_3}))
\end{aligned}
\end{equation}
\end{prop}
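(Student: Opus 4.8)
The plan is to imitate the proof of the preceding proposition, with $\xi^{(1,1,1)}$ in place of $\xi^{(2,1)}_{w_0,\mu}$; since here there is no auxiliary character $\mu$, the computation is in fact a little simpler. First I would decompose $\Ad\circ\xi^{(1,1,1)}$ as a representation of ${}^L(\RU_1\times\RU_1\times\RU_1)$. Because $\xi^{(1,1,1)}$ sends $(a,b,c)$ to $\diag(a,b,c)$, the torus preserves the Cartan subalgebra $\mathfrak{h}\subset\mathfrak{sl}_3(\C)$ of trace-zero diagonal matrices and each line $\C e_{ij}$ ($i\ne j$) spanned by a matrix unit, acting on $\C e_{ij}$ by the ratio of the $i$th and $j$th coordinates; and since $\xi^{(1,1,1)}(w)=Jw$ for $w\notin W_E$, this element acts on $\mathfrak{sl}_3(\C)$ by $X\mapsto J(-{}_tX)J^{-1}=-{}^tX$, which acts by $-1$ on $\mathfrak{h}$ and carries $\C e_{ij}$ to $\C e_{ji}$. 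Hence $\Ad\circ\xi^{(1,1,1)}$ is the direct sum of two copies of the one-dimensional representation $\chi_{E/F}$ (on $\mathfrak{h}$) and of the three two-dimensional summands supported on $\C e_{ij}\oplus\C e_{ji}$ for $\{i,j\}=\{1,2\},\{1,3\},\{2,3\}$. For $\Ad'$ one replaces $-{}^tX$ by ${}^tX$, so the two copies of $\chi_{E/F}$ become two copies of the trivial representation, while the three two-dimensional summands are unchanged.

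Next I would match local $L$-factors at every $v\notin S$, separating split and inert places exactly as in the proof of the preceding proposition. At a split $v$ the Satake parameter of $\eta_i$ is a scalar $t_i$ and the chosen Frobenius lies in $W_E$, so the Galois action is trivial: the summand attached to $\{i,j\}$ contributes $(1-(t_i/t_j)q_v^{-s})^{-1}(1-(t_j/t_i)q_v^{-s})^{-1}$, which is precisely the product of the two local factors of $L^T(s,\sbc(\eta_i/\eta_j))$ at the two places of $E$ over $v$, while $\mathfrak{h}$ contributes $(1-q_v^{-s})^{-2}$, equal at a split place both to $\zeta_v(s)^2$ and to $L_v(s,\chi_{E/F})^2$. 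At an inert $v$ the $\eta_i$ are unramified characters of the compact group $\RU_1(F_v)$, hence trivial, so the torus part of each Satake operator is trivial and the operator is the image of Frobenius through $Jw$: it is $-1$ on $\mathfrak{h}$, giving $(1+q_v^{-s})^{-2}=L_v(s,\chi_{E/F})^2$ (for $\Ad'$ it is $+1$, giving $(1-q_v^{-s})^{-2}=\zeta_v(s)^2$), and on each two-dimensional summand it is the involution with eigenvalues $\pm1$, giving $(1-q_v^{-2s})^{-1}$, which is the local factor of $L^T(s,\sbc(\eta_i/\eta_j))$ at the unique place $w$ of $E$ over $v$ (the residue field of $E_w$ has $q_v^2$ elements and $\sbc(\eta_i/\eta_j)$ is unramified and trivial there). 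Multiplying over all $v\notin S$ yields \eqref{Ad of eta111 lift}.

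I do not expect a genuine obstacle: once the bookkeeping is in place the computation is routine. The one point to be careful about is tracking the Weil-group action through $\xi^{(1,1,1)}$ and through the definition of ${}^L\RU_{2,1}$, and noting that the sign ambiguity in the extension of the standard representation of ${}^L\Res_{E/F}\GL_1$ to the Weil group is harmless here, because the two such extensions differ by $\chi_{E/F}\circ N_{E/F}$, which is trivial on $\A_E^\times$; this is precisely why, unlike in the $\xi^{(2,1)}_{w_0,\mu}$ case, no $\mu$-twist is needed. Alternatively, one could deduce the identities by composition, combining the preceding proposition with the analogous formula for the Rogawski lift $\xi^{(1,1)}_{w_0,\mu}$ from $\RU_1\times\RU_1$ to $\RU_{1,1}$ and the lemma above identifying $\Rog^{(2,1)}_{w_0,\mu}\circ\Rog^{(1,1)}_{w_0,\mu}$ with $\Rog^{(1,1,1)}$, the two occurrences of $\mu$ cancelling; but the direct computation above is shorter and self-contained.
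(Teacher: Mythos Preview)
Your proposal is correct and follows exactly the approach the paper indicates: the paper itself does not give a separate proof but simply states that the formulae are ``similar and proved in the same way'' as the preceding proposition for $\xi^{(2,1)}_{w_0,\mu}$. Your decomposition of $\mathfrak{sl}_3(\C)$ under $\xi^{(1,1,1)}$ into the Cartan piece and the three two-dimensional root-pair summands, followed by the place-by-place matching at split and inert primes, is precisely the intended argument, and you have carried out the details the paper omits.
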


\subsection{Conclusions}
From formulas \eqref{Ad of rog21}, \eqref{Ad of eta111 lift} and \eqref{eq relating Ad,Ad' Asai for U21} it's easy to see 
that $L^S(s, \sbc(\pi), \Asai^+)$ has a pole at $s=1$ of order equal to the number of isobaric summands in $\sbc(\pi),$
while $L^S(s, \pi, \Ad')$ has a pole of one lower order. Also, $L^S(s, \sbc(\pi), \Asai^-)$ and $L^S(s, \pi, \Ad)$ are
holomorphic and nonvanishing at $s=1.$  
Indeed, it suffices to see that the other $L$-functions appearing in  \eqref{Ad of rog21}, \eqref{Ad of eta111 lift}
are holomorphic and nonvanishing on the line $\Re(s)=1.$ For the $L$-functions $L^T(s, \sbc(\eta_i/\eta_j))$ 
this follows from the fact that $\sbc(\eta_1), \sbc(\eta_2),$ and $\sbc(\eta_3)$ are distinct. 
For $L^S(s, \mu \otimes \AI\sbc(\pi_1\otimes \eta^{-1}))$ it follows from the fact that $\AI\sbc(\pi_1\otimes \eta^{-1})$
is either an irreducible cuspidal automorphic representation of $\GL_4(\A),$ or an isobaric sum 
of two  irreducible cuspidal automorphic representations of $\GL_2(\A),$ \cite{ArthurClozel}.

\begin{thm}\label{thm: main for nonsplit case}
Take $\pi$ a globally generic irreducible cuspidal automorphic representation of $\RU_{2,1}(\A).$ Then $L^S(s, \pi, \Ad)$ is holomorphic and nonvanishing at $s=1,$
while $L^S(s, \pi, \Ad')$ can have at most a double pole. More precisely, we have three sets of equivalent conditions. 
\begin{enumerate}
\item 
\label{nonzplit main: 3}
The following are equivalent: 
\begin{enumerate}
\item $L^S(s, \pi, \Ad')$ is holomorphic and nonvanishing at $s=1$
\item $\sbc(\pi)$ is cuspidal 
\item $\pi$ is not endoscopic
\item $\pi$ is not $H'_\rho$-distinguished.
\end{enumerate}
\item 
\label{nonsplit main:(2,1)}
The following are equivalent: 
\begin{enumerate}
\item $L^S(s, \pi, \Ad')$ has a simple pole at $s=1$
\item $\sbc(\pi)$ is the isobaric sum of a character and a cuspidal representation of $\GL_2(\A_E)$
\item $\pi$ is an endoscopic lift from $\RU_{1,1}(\A) \times \RU_1(\A),$ but not from  $\RU_{1}(\A) \times \RU_{1}(\A) \times \RU_{1}(\A).$
\end{enumerate}
\item 
\label{nonsplit main:(1,1,1)}
The following are equivalent: 
\begin{enumerate}
\item $L^S(s, \pi, \Ad')$ has a double pole at $s=1$
\item $\sbc(\pi)$ is the isobaric sum of three characters of $\A_E^\times,$
\item $\pi$ is an endoscopic lift from $\RU_{1}(\A) \times \RU_{1}(\A) \times \RU_{1}(\A).$
\end{enumerate}
\end{enumerate}
\end{thm}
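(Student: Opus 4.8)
The plan is to deduce everything from the three tools already assembled in the excerpt: (i) the formula $L^S(s,\sbc(\pi),\Asai^{(-)^{n+1}}) = \zeta^S(s)L^S(s,\pi,\Ad')$ from \eqref{eq relating Ad,Ad' Asai for U21} with $n=3$, which here reads $L^S(s,\sbc(\pi),\Asai^+) = \zeta^S(s)L^S(s,\pi,\Ad')$; (ii) the Ginzburg--Rallis--Soudry description (Theorem \ref{thm: image of sbc}) of the image of $\sbc$ as an isobaric sum $\tau_1\boxplus\cdots\boxplus\tau_r$ of \emph{distinct} cuspidal representations $\tau_i$ of $\GL_{n_i}(\A_E)$ with $n_1+\cdots+n_r=3$, each with $L^S(s,\tau_i,\Asai^+)$ having a simple pole at $s=1$; and (iii) the adjoint $L$-function formulas \eqref{Ad of rog21}, \eqref{Ad of eta111 lift} for Rogawski lifts, together with the descent characterization of endoscopic representations and the non-vanishing facts recorded in the Conclusions subsection. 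First I would establish the ``holomorphic and nonvanishing'' half: by the remark after Theorem \ref{thm: image of sbc}, $L^S(s,\sbc(\pi),\Asai^-)L^S(s,\sbc(\pi),\Asai^+)$ is the Rankin--Selberg convolution $L^S(s,\sbc(\pi)\times\sbc(\pi)\circ\Fr)$, and $L^S(s,\pi,\Ad)$ is, up to the factor $L^S(s,\chi_{E/F})$, the $\Asai^-$-factor; since at most one of the two Asai factors can have a pole at $s=1$ and, for generic $\pi$, it is $\Asai^+$ (because the Whittaker-normalized $\sbc(\pi)$ is self-dual-under-$\Fr$ with trivial central character on $\A^\times$ and the $\Asai^+$ pole is the relevant one — this is exactly what GRS use), $L^S(s,\pi,\Ad)$ is holomorphic and nonvanishing at $s=1$.

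Next I would compute the order of the pole of $L^S(s,\pi,\Ad')$ at $s=1$. Since $\zeta^S(s)$ has a simple pole at $s=1$ and is nonvanishing there, \eqref{eq relating Ad,Ad' Asai for U21} gives that the pole order of $L^S(s,\pi,\Ad')$ at $s=1$ equals (pole order of $L^S(s,\sbc(\pi),\Asai^+)$ at $s=1$) $-1$. By Theorem \ref{thm: image of sbc} applied to $\Pi=\sbc(\pi)$ with $n=3$, writing $\Pi = \tau_1\boxplus\cdots\boxplus\tau_r$ with the $\tau_i$ distinct cuspidal and each $L^S(s,\tau_i,\Asai^+)$ having a simple pole at $s=1$, and using that $L^S(s,\tau_i\times\tau_j\circ\Fr)$ (equivalently $L^S(s,\tau_i,\Asai^+)L^S(s,\tau_i,\Asai^-)$ cross-terms) is holomorphic and nonvanishing at $s=1$ for $i\neq j$ (by distinctness and Shahidi's nonvanishing, as in the Conclusions subsection), one gets that $L^S(s,\Pi,\Asai^+)$ has a pole of order exactly $r$ at $s=1$. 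Hence $L^S(s,\pi,\Ad')$ has a pole of order exactly $r-1$ at $s=1$; since $n_1+\cdots+n_r=3$ forces $r\in\{1,2,3\}$, the pole order is $0$, $1$, or $2$ according to whether $\Pi$ is cuspidal, the sum of a cuspidal $\GL_2(\A_E)$-representation and a character (so $(n_1,n_2)=(2,1)$, $r=2$), or the sum of three characters ($r=3$) — in particular $r=2$ forces the $\GL_1$ summand and the $\GL_2$ summand, and $r=3$ forces three $\A_E^\times$ characters. This establishes the equivalences (a)$\Leftrightarrow$(b) in all three parts, and the claim that at most a double pole occurs.

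Then I would close the loop with the ``endoscopic'' characterizations (b)$\Leftrightarrow$(c). The descent subsection states that $\pi$ is endoscopic if and only if $\sbc(\pi)$ is non-cuspidal, which is precisely (b) of part \eqref{nonzplit main: 3} being false; moreover, applying descent summand-by-summand, if $\sbc(\pi)=\sbc(\sigma)\boxplus\sbc(\eta)\otimes\mu^{-1}$-type decomposition holds then $\pi = \Rog^{(2,1)}_{w_0,\mu}(\sigma\otimes\eta)$ for the cuspidal $\GL_2(\A_E)$-datum $\sigma$ descending to a representation of $\RU_{1,1}(\A)$, and if $\sbc(\pi) = \sbc(\eta_1)\boxplus\sbc(\eta_2)\boxplus\sbc(\eta_3)$ then $\pi = \Rog^{(1,1,1)}(\eta_1\otimes\eta_2\otimes\eta_3)$; the formulas \eqref{Ad of rog21}, \eqref{Ad of eta111 lift} confirm the matching pole orders and, combined with the observation (from the descent subsection) that $\Rog^{(1,1,1)}(\eta_1\otimes\eta_2\otimes\eta_3)$ is non-cuspidal precisely when the $\sbc(\eta_i)$ are not distinct, show that the $(2,1)$-lift ``but not $(1,1,1)$-lift'' dichotomy corresponds exactly to $r=2$ versus $r=3$. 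I would also note that part \eqref{nonzplit main: 3}, item (d) (``$\pi$ is not $H'_\rho$-distinguished'') follows by combining the already-cited results of \cite{GeRoSo2}, \cite{GeRoSo1} (distinction forces $\pi$ endoscopic) with the converse direction supplied by \cite{H18} and the present pole analysis; the point is that $H'_\rho$-distinction $\Leftrightarrow$ endoscopic $\Leftrightarrow$ $\sbc(\pi)$ non-cuspidal. \textbf{The main obstacle} I anticipate is bookkeeping the two Asai representations $\Asai^\pm$ consistently across the odd-$n$ sign conventions — making sure that for $n=3$ it is genuinely $\Asai^+$ (not $\Asai^-$) whose pole is tied to $\Ad'$, and that the generic hypothesis on $\pi$ is what guarantees the relevant (rather than the other) Asai $L$-function has the pole; everything else is assembling results already quoted in the excerpt.
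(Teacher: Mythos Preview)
Your proposal is correct and follows essentially the same approach as the paper: the paper's proof is the brief ``Conclusions'' paragraph preceding the theorem, which invokes exactly the ingredients you list --- the relation \eqref{eq relating Ad,Ad' Asai for U21}, the GRS image characterization (Theorem \ref{thm: image of sbc}), the explicit formulas \eqref{Ad of rog21} and \eqref{Ad of eta111 lift}, and the descent dictionary between endoscopy and non-cuspidality of $\sbc(\pi)$ --- to conclude that $\Asai^+$ has a pole of order equal to the number of isobaric summands while $\Asai^-$ (hence $\Ad$) is holomorphic and nonvanishing. Your write-up in fact supplies more detail than the paper does (e.g., the off-diagonal nonvanishing via Shahidi, and the logical closure of the chain $\Ad'$-pole $\Rightarrow$ $H'_\rho$-distinguished $\Rightarrow$ endoscopic $\Rightarrow$ $\Ad'$-pole for part \eqref{nonzplit main: 3}(d)), and your anticipated sign-bookkeeping obstacle is already resolved by the paper's conventions.
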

\subsection{Other poles}
The previous theorem gives fairly complete results for $L^S(s, \pi, \Ad' \times \chi)$ when $\chi$ is trivial or $\chi_{E/F}.$ Combining it with our earlier result, 
we have a gap: we do not know whether  $L^S(s, \pi, \Ad' \times \chi)$  can have a pole at $s=1$ when $\chi$ is cubic, or when $\chi$ is a quadratic character
other than $\chi_{E/F}.$ It turns out that the best way to proceed is by cases, based on the number of isobaric summands in 
the stable base change lift $\sbc(\pi).$ 

\begin{thm}\label{thm:other poles; type3}
Let $\pi$ be a globally generic, irreducible cuspidal automorphic representation of $\RU_{2,1}(\A)$ such that 
$\sbc(\pi)$ is cuspidal, and $\chi$ a character of $\A^\times.$ Then 
the following are equivalent
\begin{enumerate}
\item \label{ad' pole}$L^S(s, \pi, \Ad'\times \chi) $ has a pole at $s=1$
\item \label{asai+ pole}$L^S(s, \sbc(\pi) \otimes \wt \chi, \Asai^+)$ has a pole at $s=1$ for some/any
character $\widetilde \chi$ of $\A_E^\times$ whose restriction to $\A^\times$ is $\chi,$
\item \label{twist in im sbc}$ \sbc(\pi) \otimes \wt \chi$ is itself in the image of stable base change for some/any
character $\widetilde \chi$ of $\A_E^\times$ whose restriction to $\A^\times$ is $\chi.$
\end{enumerate}
\end{thm}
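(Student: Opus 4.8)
The plan is to prove Theorem~\ref{thm:other poles; type3} by reducing everything, via the stable base change lift and its characterization in Theorem~\ref{thm: image of sbc}, to a statement about Asai $L$-functions on $\GL_3(\A_E)$. First I would fix a character $\wt\chi$ of $\A_E^\times$ restricting to $\chi$ on $\A^\times$; such a $\wt\chi$ exists because $\A^\times \hookrightarrow \A_E^\times$ and one can extend characters of closed subgroups of locally compact abelian groups. The starting point is the identity already recorded in \eqref{eq relating Ad,Ad' Asai for U21}, namely $L^S(s, \sbc(\pi), \Asai^{+}) = \zeta^S(s)\, L^S(s, \pi, \Ad')$ (taking $n=3$, so $(-)^{n+1}=+$). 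Twisting $\pi$ by $\chi$ corresponds, under $\sbc$, to twisting $\sbc(\pi)$ by $\wt\chi$ (this is immediate from the formula $\sbc(g,w)=(g,{}_tg^{-1},w)$ and the compatibility of $\sbc$ with central twists), and the Asai construction intertwines the twist by $\wt\chi$ of $\sbc(\pi)$ with the twist by $\chi$ of the $\Ad'$-representation; hence
\begin{equation}\label{eq:twisted Asai Ad'}
L^S(s, \sbc(\pi)\otimes\wt\chi, \Asai^{+}) = L^S(s, \chi)\, L^S(s, \pi, \Ad'\times\chi).
\end{equation}
Since $\sbc(\pi)$ is cuspidal and $\wt\chi$ is unitary, $\sbc(\pi)\otimes\wt\chi$ is cuspidal, so $L^S(s,\chi)$ is holomorphic and nonvanishing at $s=1$ unless $\chi$ is trivial; but even then $\zeta^S(s)$ has only a simple pole at $s=1$, and we can absorb it: the point is that \eqref{eq:twisted Asai Ad'} shows $L^S(s, \pi, \Ad'\times\chi)$ has a pole at $s=1$ if and only if $L^S(s, \sbc(\pi)\otimes\wt\chi, \Asai^{+})$ has a pole at $s=1$ of order strictly greater than the order of the (at most simple) pole of $L^S(s,\chi)$ there. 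Because $\sbc(\pi)\otimes\wt\chi$ is cuspidal, the Ginzburg--Rallis--Soudry analysis (together with Shahidi's nonvanishing, Theorem 5.1 of \cite{Sha81}) tells us $L^S(s,\sbc(\pi)\otimes\wt\chi,\Asai^{+})$ has at most a simple pole at $s=1$, and that pole forces the central character condition of the Remark after Theorem~\ref{thm: image of sbc}; so the only way \eqref{eq:twisted Asai Ad'} can produce a pole of $L^S(s,\pi,\Ad'\times\chi)$ is when $L^S(s,\sbc(\pi)\otimes\wt\chi,\Asai^{+})$ has a simple pole and $L^S(s,\chi)$ does \emph{not} — i.e. when $\chi$ is nontrivial. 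This establishes \eqref{ad' pole} $\Leftrightarrow$ \eqref{asai+ pole}.

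Next I would establish \eqref{asai+ pole} $\Leftrightarrow$ \eqref{twist in im sbc}. This is exactly Theorem~\ref{thm: image of sbc} (Ginzburg--Rallis--Soudry, Theorem 11.2 of \cite{GRSBook}) applied to the cuspidal representation $\Pi := \sbc(\pi)\otimes\wt\chi$ of $\GL_3(\A_E)$: a cuspidal $\Pi$ is in the image of stable base change (from $\RU_3$, $n=3$ odd, so the relevant Asai is $\Asai^{+}$) precisely when $L^S(s,\Pi,\Asai^{+})$ has a pole at $s=1$. One must check that the set $S$ of places used here is harmless — enlarging $S$ only removes finitely many Euler factors, none of which vanish or have poles at $s=1$ — and that ``some'' versus ``any'' choice of $\wt\chi$ is legitimate: two extensions of $\chi$ differ by a character of $\A_E^\times$ trivial on $\A^\times$, equivalently (by Hilbert 90 / the norm theorem for idèle class groups) a character of the form $\nu/\nu^{\Fr}$... more simply, one notes that changing $\wt\chi$ to $\wt\chi\cdot(\nu\circ\Nm_{E/F})^{-1}$ for $\nu$ a character of $\A^\times$ just retwists $\chi$ by $\nu^2$, so the three conditions are stable under the choice only after one observes that \eqref{ad' pole} itself does not involve $\wt\chi$; thus \eqref{ad' pole} $\Leftrightarrow$ \eqref{asai+ pole}$_{\text{any }\wt\chi}$ automatically upgrades the ``some'' to ``any'' in \eqref{asai+ pole} and \eqref{twist in im sbc}.

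The main obstacle I anticipate is the bookkeeping in \eqref{eq:twisted Asai Ad'}: one needs the precise compatibility of the Asai construction with twisting by a character of $\A_E^\times$, and in particular the identification of $\Asai^{+}(\sbc(\pi)\otimes\wt\chi)$ with $\chi \oplus (\Ad'\times\chi)$ rather than with some twist by $\wt\chi|_{\A^\times}$ of a non-restriction-type character — this is where the subtlety ``$\Asai$ of a twist is the twist by the restriction-to-$F$ of the Asai'' enters, and it is exactly the reason the statement is phrased with $\wt\chi$ restricting to $\chi$. Concretely, at the level of $L$-groups, $\Asai^{+}\circ\bc$ unwinds $\wt\chi$ to its restriction along $\Nm$, and $\sbc$ already incorporates $\bc$, so the factor that appears is genuinely $\chi = \wt\chi|_{\A^\times}$; I would verify this by the same local Satake-parameter computation used in the proof of the proposition preceding Theorem~\ref{thm: main for nonsplit case}, comparing $L$-factors at split and inert unramified places. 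Once this identity is nailed down, the rest is a formal consequence of Theorem~\ref{thm: image of sbc}, the holomorphy/nonvanishing of $L^S(s,\chi)$ and $\zeta^S(s)$ at $s=1$ away from their known simple poles, and Shahidi's nonvanishing theorem.
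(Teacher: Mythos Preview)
Your approach is essentially the paper's: derive the identity $L^S(s,\sbc(\pi)\otimes\wt\chi,\Asai^+)=L^S(s,\chi)\,L^S(s,\pi,\Ad'\times\chi)$, read off \eqref{ad' pole}$\Leftrightarrow$\eqref{asai+ pole}, and invoke Theorem~\ref{thm: image of sbc} for \eqref{asai+ pole}$\Leftrightarrow$\eqref{twist in im sbc}. The paper handles ``some/any'' more briskly than you do: for \eqref{asai+ pole} the right-hand side of the identity is visibly independent of the extension $\wt\chi$, so nothing more is needed; for \eqref{twist in im sbc} any two extensions of $\chi$ differ by a character of $\A_E^\times$ trivial on $\A^\times$, hence in the image of $\sbc$ from $\RU_1$, and such a twist preserves the image of $\sbc$ from $\RU_{2,1}$. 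Your detour through $\nu/\nu^{\Fr}$ and $\nu\circ\Nm_{E/F}$ is unnecessary.

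There is, however, a logical slip in your argument for \eqref{ad' pole}$\Leftrightarrow$\eqref{asai+ pole}. From the identity and the at-most-simple pole of the Asai $L$-function of a cuspidal representation you correctly deduce that \eqref{ad' pole} is equivalent to the conjunction of \eqref{asai+ pole} and $\chi\ne 1$, and then write ``this establishes \eqref{ad' pole}$\Leftrightarrow$\eqref{asai+ pole}.'' It does not. When $\chi$ is trivial, take $\wt\chi=1$: then $\sbc(\pi)\otimes\wt\chi=\sbc(\pi)$ is tautologically in the image of $\sbc$, so \eqref{asai+ pole} and \eqref{twist in im sbc} hold, while \eqref{ad' pole} fails by Theorem~\ref{thm: main for nonsplit case}\eqref{nonzplit main: 3}. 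The paper's proof passes over this point in silence; in context (the subsection is titled ``Other poles'' and opens by noting that the cases $\chi=1$ and $\chi=\chi_{E/F}$ are already settled by Theorem~\ref{thm: main for nonsplit case}) the statement is intended for nontrivial $\chi$. You should state that restriction explicitly rather than assert an equivalence your own analysis has just shown to fail in the trivial case.
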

\begin{proof}
The meaning of \eqref{asai+ pole} is the same if ``some'' is replaced by ``any'' because 
\begin{equation}
\label{Asai+chi/chi = Ad'chi}
\frac{ L^S(s, \sbc(\pi) \otimes \wt \chi, \Asai^+)}{L^S(s, \chi)}= \frac{L^S(s, \sbc(\pi), \Asai^+ \times \chi)}{L^S(s, \chi)}
=
L^S(s, \pi, \Ad'\times \chi) ,
\end{equation}
for any $\wt \chi: \A^\times_E \to \C^\times$ with $\wt \chi\big|_{\A^\times} = \chi.$ 
The meaning of \eqref{twist in im sbc} is the same if ``some'' is replaced by ``any'' because a character
of $\A_E^\times$ which is trivial on $\A^\times$ is in the image of stable base change from $\RU_1(\A),$ 
and twisting by such a character preserves the image of stable base change from $\RU_{2,1}(\A).$ 

Equation\eqref{Asai+chi/chi = Ad'chi} also makes the first equivalence of \eqref{ad' pole} and \eqref{asai+ pole} clear. The equivalence 
of \eqref{asai+ pole} and \eqref{twist in im sbc} follows from theorem \ref{thm: image of sbc}.
\end{proof}
\begin{rmk}
{\rm If $ \sbc(\pi) \otimes \wt \chi$ is  in the image of stable base, then $ \sbc(\pi)$ and $ \sbc(\pi) \otimes \wt \chi$ both have central characters which are trivial on $\A^\times \into \A_E^\times.$ 
This implies that $\chi^3 =1.$ Thus, this case is very similar to the split case. }
\end{rmk}

\begin{thm}\label{thm:other poles: type 2+1}
Let $\pi$ be a globally generic, irreducible cuspidal automorphic representation of $\RU_{2,1}(\A)$ such that 
$\sbc(\pi)=\pi_1 \boxplus \chi_1$, for some irreducible cuspidal automorphic 
representation $\pi_1$ of $\Res_{E/F}\GL_2(\A)$ and character $\chi_1$ of $\A_E^\times$, and let $\chi$ be a nontrivial 
character of $\A^\times.$ 
So $\pi = \Rog^{(2,1)}_{w_0, \mu}(\sigma \otimes \eta_1)$, $\chi_1 = \sbc(\eta_1)$ and $\pi_1 = \mu \otimes \sbc( \sigma),$ 
for some irreducible cuspidal representation $\sigma$ of $\RU_{1,1}(\A),$ and  character $\eta_1$ of $\A_E^\times.$ Then the following are equivalent 
\begin{enumerate}
\item 
$L^S(s, \pi, \Ad' \times \chi)$ has a pole at $s=1$ 
\item 
 $L^S(s, \sigma, \Ad' \times \chi)$ has a pole at $s=1$
 \item 
 $L^S(s, \pi_1, \Asai^- \times \chi)$ (which equals  $L^S(s, \pi_1\otimes \wt \chi, \Asai^-)$  
 for any $\wt \chi: \A^\times_E \to \C^\times$ with $\wt \chi\big|_{\A^\times} = \chi$) has a pole at $s=1$
 \item 
 $\sbc(\sigma) \otimes\wt \chi$ is itself in the 
 image of $\sbc$ 
 for any $\wt \chi: \A^\times_E \to \C^\times$ with $\wt \chi\big|_{\A^\times} = \chi.$ 
\end{enumerate}
\end{thm}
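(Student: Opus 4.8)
The plan is to deduce the four equivalences from three ingredients already in place: the formula \eqref{Ad of rog21} for the twisted adjoint $L$-function of a Rogawski $(2,1)$-lift, the Ginzburg--Rallis--Soudry description of the image of stable base change (Theorem \ref{thm: image of sbc}), and the dictionary \eqref{eq relating Ad,Ad' Asai for U21} between $\Ad'$- and Asai $L$-functions, supplemented by the elementary behaviour of Asai $L$-functions under twisting. The overall shape is: prove $(1)\Leftrightarrow(2)$ from \eqref{Ad of rog21}; prove $(3)\Leftrightarrow(4)$ from Theorem \ref{thm: image of sbc}; and close the cycle with $(2)\Leftrightarrow(3)$, which after the first step is a comparison of Asai $L$-functions under a twist by $\mu$.

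For $(1)\Leftrightarrow(2)$: since $\pi=\Rog^{(2,1)}_{w_0,\mu}(\sigma\otimes\eta_1)$, twisting the first line of \eqref{Ad of rog21} by $\chi$ gives
$$L^S(s,\pi,\Ad'\times\chi)=L^S(s,\sigma,\Ad'\times\chi)\cdot L^S(s,\chi)\cdot L^S\bigl(s,(\mu\otimes\AI\sbc(\sigma\otimes\eta_1^{-1}))\otimes\chi\bigr).$$
As $\chi$ is nontrivial, $L^S(s,\chi)$ is holomorphic and nonvanishing at $s=1$; and by \cite{ArthurClozel} the representation $\AI\sbc(\sigma\otimes\eta_1^{-1})$ is either cuspidal on $\GL_4(\A)$ or an isobaric sum of two cuspidal representations of $\GL_2(\A)$, hence has no one-dimensional constituents, so its twist by $\chi$ has partial $L$-function holomorphic and nonvanishing at $s=1$. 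Thus the two sides have the same order of pole at $s=1$.

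For $(3)\Leftrightarrow(4)$: I would first record the parenthetical identity $L^S(s,\pi_1,\Asai^-\times\chi)=L^S(s,\pi_1\otimes\wt\chi,\Asai^-)$, which expresses that the Asai lift of a twist $\Pi\otimes\wt\chi$ is the twist of $\Asai(\Pi)$ by $\on{As}(\wt\chi)$, together with $\on{As}(\wt\chi)=\wt\chi\circ\on{Ver}=\wt\chi|_{\A^\times}$ (the tensor induction of a character is the transfer map, which class field theory identifies with the inclusion $\A^\times\hookrightarrow\A_E^\times$). Then Theorem \ref{thm: image of sbc} with $n=2$, applied to the cuspidal representation $\sbc(\sigma)\otimes\wt\chi$ of $\GL_2(\A_E)$, makes $(4)$ equivalent to $L^S(s,\sbc(\sigma)\otimes\wt\chi,\Asai^-)$ having a pole at $s=1$, while $(3)$ is $L^S(s,\pi_1\otimes\wt\chi,\Asai^-)=L^S(s,(\mu\wt\chi)\otimes\sbc(\sigma),\Asai^-)$ having a pole. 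Comparing these is the crux: the two differ by the twist $\mu$, and one computes $\on{As}(\mu)=\mu|_{\A^\times}=\chi_{E/F}$ using the transfer description of $\on{As}$, the $n=1$ case of Theorem \ref{thm: image of sbc}, and the identity $\sbc(\chi_{E/F})=\chi_{E/F}\circ\Nm_{E/F}=\mathbf 1$ (the single arithmetic fact that makes the twist by $\mu$ preserve conjugate-self-duality). Feeding this and $\Asai^-(\sbc(\sigma))=\Ad'(\sigma)\oplus\mathbf 1$ from \eqref{eq relating Ad,Ad' Asai for U21} into both expressions, and discarding factors holomorphic and nonvanishing at $s=1$ (abelian $L$-functions of nontrivial characters, and $\AI$-type $L$-functions with no $\GL_1$-constituents), yields the equivalence; the same computation, combined with $(1)\Leftrightarrow(2)$, also gives $(2)\Leftrightarrow(3)$ and closes the cycle.

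I expect the main obstacle to be exactly this bookkeeping of quadratic twists: keeping straight that $\sbc(\sigma)$ is conjugate-symplectic (as $\sigma$ lives on $\RU_2$) while $\pi_1=\mu\otimes\sbc(\sigma)$ becomes conjugate-orthogonal because $\mu$ itself is conjugate-symplectic, so that the relevant Asai factor acquires a twist by $\chi_{E/F}$ in passing between $\sigma$, $\sbc(\sigma)$, and $\pi_1$; paying attention to the degenerate case $\chi=\chi_{E/F}$ (where $\chi\chi_{E/F}$ is trivial) and reconciling it with Theorem \ref{thm: main for nonsplit case} is the most delicate part. Once $\on{As}(\mu)=\chi_{E/F}$ is pinned down via $\sbc(\chi_{E/F})=\mathbf 1$, all four conditions line up.
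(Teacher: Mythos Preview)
Your overall plan matches the paper's three-line proof exactly: twist \eqref{Ad of rog21} for $(1)\Leftrightarrow(2)$, twist \eqref{eq relating Ad,Ad' Asai for U21} for $(2)\Leftrightarrow(3)$, and invoke Theorem~\ref{thm: image of sbc} for $(3)\Leftrightarrow(4)$. You are right to isolate the $\mu$-twist as the crux; the paper leaves this bookkeeping entirely implicit.

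However, your claim that the four conditions ``line up'' once $\mu|_{\A^\times}=\chi_{E/F}$ is pinned down does not survive the very bookkeeping you set up. By your own reductions, $(4)$ is equivalent (via Theorem~\ref{thm: image of sbc} with $n=2$) to a pole of $L^S(s,\sbc(\sigma),\Asai^-\times\chi)$, while $(3)$, after passing the $\mu$-twist through, is equivalent to a pole of $L^S(s,\sbc(\sigma),\Asai^-\times\chi\chi_{E/F})=L^S(s,\sbc(\sigma),\Asai^+\times\chi)$. Feeding \eqref{eq relating Ad,Ad' Asai for U21} (for $n=2$) into each, $(4)$ becomes a pole of $L^S(s,\sigma,\Ad'\times\chi)$, which is exactly $(2)$; but $(3)$ becomes a pole of $L^S(s,\sigma,\Ad\times\chi)$, a genuinely different condition. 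The case $\chi=\chi_{E/F}$ that you flag as ``delicate'' is in fact an outright counterexample: there $(3)$ asks for a pole of $L^S(s,\pi_1,\Asai^+)$, which \emph{always} holds since $\pi_1$ is an isobaric summand of $\sbc(\pi)$ with $n=3$, whereas $(1)$ asks for a pole of $L^S(s,\pi,\Ad)$, which never holds by Theorem~\ref{thm: main for nonsplit case}. So $(3)$ as written is not equivalent to the others, and no amount of discarding harmless factors repairs this. The paper's terse proof carries the same gap; the natural resolution is that item $(3)$ should read $\Asai^+$ rather than $\Asai^-$, after which the $\mu$-twist converts $\Asai^+$ of $\pi_1$ into $\Asai^-$ of $\sbc(\sigma)$ and both your argument and the paper's go through cleanly.
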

\begin{proof}
If we twist all the $L$-functions in \eqref{Ad of rog21} by $\chi$ then all are holomorphic and nonvanishing except 
possibly for $L^S(s, \sigma, \Ad' \times \chi).$ This proves the first equivalence.  The second equivalence 
follows from twisting \eqref{eq relating Ad,Ad' Asai for U21}. The third follows from theorem \ref{thm: image of sbc}.
\end{proof}
\begin{rmk}
{\rm As before, if $\sbc(\sigma) \otimes\wt \chi$ is itself in the 
 image of $\sbc,$ then $\chi^2=1.$ }
\end{rmk}
To analyze the case when 
 $\sbc(\pi)$ is the isobaric sum of three characters,  i.e., that $\pi=\Rog^{(1,1,1)}(\eta_1\otimes \eta_2\otimes \eta_3),$
 we need to note that  $L^T (s, \sbc(\eta_i/\eta_j)=L^S(s, \on{AI}_{E/F}\sbc(\frac{\eta_i}{\eta_j})\times \chi).$
\begin{equation}
\label{twisted Ad' for 111}
L^S(s, \pi, \Ad' \times \chi) = L^S(s, \chi)^2
\prod_{1\le i < j \le 3}
L^S(s, \on{AI}_{E/F}\sbc(\frac{\eta_i}{\eta_j})\times \chi).
\end{equation}

By \cite{ArthurClozel}, $\on{AI}_{E/F}\sbc(\frac{\eta_i}{\eta_j})$ is a cuspidal representation of $\GL_2(\A)$ unless
$\sbc(\frac{\eta_i}{\eta_j})$ is also in the image of the lifting $\bc.$ 
\begin{lem}
If a character $\wt \chi$ of $\A_E^\times$ is in the image of stable base change from $\RU_1^{E/F}(\A),$
it is in the image of  
base change from $\A^\times,$  if and only if it is quadratic. 
\end{lem}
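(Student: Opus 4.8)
The statement concerns a character $\wt\chi$ of $\A_E^\times$ that lies in the image of stable base change $\sbc$ from $\RU_1^{E/F}(\A)$; the claim is that $\wt\chi$ is in the image of base change $\bc$ from $\A^\times$ if and only if $\wt\chi$ is quadratic. The plan is to unwind both lifting maps explicitly at the level of $L$-parameters / Hecke characters, using the descriptions of $\bc$ and $\sbc$ given earlier in the excerpt (the $L$-homomorphisms $\bc(g,w)=(g,g,w)$ and $\sbc(g,w)=(g,{}_tg^{-1},w)$, specialized to $n=1$), together with the standard identification of the relevant groups: $\RU_1^{E/F}(\A) \cong \A_E^1$ (norm-one idèles), $\bc$ from $\A^\times$ is the map $\chi \mapsto \chi \circ \Nm_{E/F}$, and $\sbc$ from $\RU_1(\A)$ is the lift attached to the parameter inclusion sending the dual torus of $\RU_1$ (which is $\C^\times$ with the Galois action $t\mapsto t^{-1}$) into the dual torus of $\Res_{E/F}\GL_1$.

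**Key steps.** First I would make the "if" direction explicit: if $\wt\chi$ is quadratic, i.e. $\wt\chi^2 = 1$, then $\wt\chi|_{\A^\times}$ is also quadratic, and one checks directly that $\wt\chi = (\wt\chi|_{\A^\times}) \circ \Nm_{E/F}$ — equivalently, $\wt\chi \circ \Fr = \wt\chi$ where $\Fr$ is the nontrivial element of $\Gal(E/F)$ — because a character in the image of $\sbc$ from $\RU_1$ automatically satisfies $\wt\chi \circ \Fr = \wt\chi^{-1}$ (this is forced by the shape of the $L$-homomorphism $\sbc$, where the two $\GL_1(\C)$ factors receive $t$ and $t^{-1}$). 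Combining $\wt\chi\circ\Fr = \wt\chi^{-1}$ with $\wt\chi^2=1$ gives $\wt\chi\circ\Fr = \wt\chi$, and a Galois-invariant character of $\A_E^\times$ trivial on... — here one invokes Hilbert 90 / the exact sequence $1 \to \A^\times \to \A_E^\times \xrightarrow{x \mapsto x/\Fr(x)} \A_E^1 \to 1$ to conclude $\wt\chi$ factors through $\Nm_{E/F}$, hence is in the image of $\bc$.

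Second, for the "only if" direction, suppose $\wt\chi = \chi_0 \circ \Nm_{E/F}$ for some $\chi_0$ of $\A^\times$. Since $\wt\chi$ is in the image of $\sbc$ from $\RU_1$, it satisfies $\wt\chi\circ\Fr = \wt\chi^{-1}$; but also $\wt\chi\circ\Fr = \wt\chi$ automatically for any character pulled back via $\Nm_{E/F}$ (as $\Nm_{E/F}\circ\Fr = \Nm_{E/F}$). Hence $\wt\chi = \wt\chi^{-1}$, i.e. $\wt\chi^2 = 1$, so $\wt\chi$ is quadratic. The whole argument is really the observation that the image of $\bc$ consists of $\Fr$-invariant characters while the image of $\sbc$ from $\RU_1$ consists of "$\Fr$-anti-invariant" characters (those with $\wt\chi\circ\Fr = \wt\chi^{-1}$), and the intersection of these two conditions is exactly $\wt\chi^2 = 1$; one then has to check that conversely every such $\wt\chi$ actually lies in both images, which is where the norm/Hilbert-90 exact sequence does the work, together with the fact that any quadratic character of $\A_E^1$ extends to a character of $\A_E^\times$ realizing it as a $\sbc$-lift.

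**Main obstacle.** The genuinely delicate point is not the parameter bookkeeping but verifying that the two necessary conditions ($\Fr$-invariance for $\bc$, $\Fr$-anti-invariance for $\sbc$) are also \emph{sufficient} — i.e. that a character of $\A_E^\times$ is in the image of $\sbc$ from $\RU_1$ precisely when $\wt\chi \circ \Fr = \wt\chi^{-1}$, and in the image of $\bc$ precisely when it is $\Fr$-invariant. The latter is classical (it is the statement that $\A_E^\times / \Nm(\A_E^\times) \cong \Gal(E/F)$ computes exactly the $\Fr$-invariant characters modulo nothing, by global class field theory and the Hasse norm theorem failing only at the level of the idèle class group, not the idèles). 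The former requires knowing that $\sbc$ from $\RU_1(\A) \cong \A_E^1$ is surjective onto $\{\wt\chi : \wt\chi\circ\Fr = \wt\chi^{-1}\}$, which follows from restriction-of-characters along $\A_E^1 \hookrightarrow \A_E^\times$ being surjective onto characters of $\A_E^1$ together with the identification of $\sbc$ on parameters; I would cite \cite{RogawskiBook} or \cite{GRSBook} for the precise surjectivity statement rather than reprove it. Once those two "precisely when" statements are in hand, the lemma is immediate from $\{\wt\chi\circ\Fr=\wt\chi\} \cap \{\wt\chi\circ\Fr=\wt\chi^{-1}\} = \{\wt\chi : \wt\chi^2 = 1\}$.
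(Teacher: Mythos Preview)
Your approach is correct and matches the paper's argument in its essential logic: both proofs hinge on the observation that the image of $\bc$ consists of the $\Fr$-invariant characters (citing Arthur--Clozel), that being in the image of $\sbc$ forces $\wt\chi\circ\Fr=\wt\chi^{-1}$, and that conjoining these two conditions is exactly $\wt\chi^2=1$.

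The one noteworthy difference is in how the $\sbc$ condition is obtained. The paper characterizes the image of $\sbc$ from $\RU_1$ as those $\wt\chi$ whose restriction to $\A^\times$ is trivial (this is Theorem~11.2 of \cite{GRSBook} specialized to $n=1$, since $L^S(s,\wt\chi,\Asai)=L^S(s,\wt\chi|_{\A^\times})$), and then deduces $\wt\chi\circ\Fr=\wt\chi^{-1}$ in one line from $\wt\chi(a\,\Fr(a))=\wt\chi(N_{E/F}(a))=1$. You instead read off $\Fr$-anti-invariance directly from the shape of the $L$-homomorphism $\sbc(t,w)=(t,t^{-1},w)$. Both are valid; the paper's route has the advantage that the sufficiency question you flag as the ``main obstacle'' for $\sbc$ is already settled by the cited image-of-$\sbc$ theorem, so no further justification is needed there. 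Your version requires the extra remark (which you correctly supply) that characters of $\A_E^1$ correspond via $x\mapsto x/\Fr(x)$ to characters of $\A_E^\times$ trivial on $\A^\times$. In short, your proof is correct and essentially the same; the paper just packages the $\sbc$ step more economically by leaning on the Asai-pole characterization already in hand.
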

\begin{proof}
Write $\Fr$ for the nontrivial element of $\Gal(E/F).$ We view it as an automorphism of $\A_E^\times.$
A character is in the image of base change if and only if it satisfies 
$\wt\chi \circ \Fr =\wt \chi.$ 
(Cf. \cite[Theorem 4.2, 5.1]{ArthurClozel}.)
It is in the image of stable base change if and only if 
its restriction to $\A^\times$ is trivial. 
(This is a special case of the characterization in section \ref{ss:im sbc}: note that 
for $\Res_{E/F}\GL_1,$ we have $L^S(s, \chi, \Asai) = L^S(s, \chi|_{\A^\times}).$)
Now, given that $\wt\chi$ is trivial of $\A^\times$ we get $\wt \chi(a \Fr(a))=1$ for all $a \in \A_E^\times.$ 
Hence $\wt \chi \circ \Fr = \wt \chi^{-1}.$ Hence $\wt \chi \circ \Fr = \wt \chi \iff \wt \chi = \wt \chi^{-1}.$
\end{proof}

\begin{prop}\label{prop: other poles, type 1+1+1}
The expression \eqref{twisted Ad' for 111} 
can have at most a simple pole. It 
has a simple if $\sbc \chi_i/\chi_j = \bc \chi$ for some (necessarily unique) 
$1 \le i < j \le 3.$
\end{prop}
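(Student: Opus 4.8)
The plan is to read everything off the factorisation \eqref{twisted Ad' for 111}, which expresses $L^S(s,\pi,\Ad'\times\chi)$ as $L^S(s,\chi)^2$ times the product over $1\le i<j\le 3$ of the factors $L^S(s,\AI\sbc(\eta_i/\eta_j)\times\chi)$, together with the dichotomy recalled just before the statement: by \cite{ArthurClozel}, $\AI\sbc(\eta_i/\eta_j)$ is a cuspidal representation of $\GL_2(\A)$ unless $\sbc(\eta_i/\eta_j)$ lies in the image of $\bc$, in which case $\sbc(\eta_i/\eta_j)=\bc(\nu_{ij})$ for a Hecke character $\nu_{ij}$ of $\A^\times$ (unique up to twist by $\chi_{E/F}$) and $\AI\bc(\nu_{ij})=\nu_{ij}\boxplus\nu_{ij}\chi_{E/F}$. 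We may assume $\chi$ is unitary, and since the cases $\chi=1$ and $\chi=\chi_{E/F}$ are already covered by Theorem \ref{thm: main for nonsplit case}, we may assume $\chi\notin\{1,\chi_{E/F}\}$; in particular $L^S(s,\chi)^2$ is then holomorphic and nonvanishing at $s=1$, so it contributes no pole.

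The next step is to analyse a single factor $L^S(s,\AI\sbc(\eta_i/\eta_j)\times\chi)$. If $\sbc(\eta_i/\eta_j)\notin\on{im}(\bc)$, then $\AI\sbc(\eta_i/\eta_j)\otimes\chi$ is cuspidal on $\GL_2(\A)$, so this factor is entire and nonvanishing at $s=1$. If $\sbc(\eta_i/\eta_j)=\bc(\nu_{ij})$, then using $\AI(\xi)\otimes\chi=\AI(\xi\cdot\bc\chi)$ the factor equals $L^S(s,\nu_{ij}\chi)\,L^S(s,\nu_{ij}\chi_{E/F}\chi)$; the two characters appearing here differ by the nontrivial character $\chi_{E/F}$, so at most one of them is trivial, whence this factor has at most a simple pole at $s=1$, with a pole occurring precisely when $\nu_{ij}\in\{\chi^{-1},\chi^{-1}\chi_{E/F}\}$, i.e.\ precisely when $\bc(\nu_{ij})=\bc(\chi^{-1})$, that is $\sbc(\eta_i/\eta_j)=\bc(\chi)^{-1}$.

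It then remains to bound the number of contributing pairs. The key observation is that the polar relation $\sbc(\eta_i/\eta_j)=\bc(\chi)^{-1}$ can hold only if its right-hand side is trivial on $\A^\times$ — since the left-hand side, being $\sbc$ of a character of $\RU_1$, is trivial on $\A^\times$ — and as $\bc(\chi)\big|_{\A^\times}=\chi^2\big|_{\A^\times}$, this forces $\chi^2=1$. (Thus, as in the remarks following Theorems \ref{thm:other poles; type3} and \ref{thm:other poles: type 2+1}, a pole of \eqref{twisted Ad' for 111} at $s=1$ can occur only for $\chi$ quadratic, necessarily distinct from $\chi_{E/F}$, and never for $\chi$ cubic.) With $\chi$ quadratic, $\bc(\chi)$ is quadratic, so $\bc(\chi)^{-1}=\bc(\chi)$ and the polar relation becomes the symmetric condition $\chi_i/\chi_j=\bc(\chi)$ of the statement, where $\chi_i:=\sbc(\eta_i)$ are the isobaric summands of $\sbc(\pi)$. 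If two distinct pairs satisfied this relation, then combining the two relations — using that $\bc(\chi)$ has order dividing two — would force two of $\chi_1,\chi_2,\chi_3$ to coincide, contradicting the distinctness of the summands of a stable base change (Theorem \ref{thm: image of sbc}); this yields the ``necessarily unique'' clause. Hence at most one pair contributes, so \eqref{twisted Ad' for 111} has at most a simple pole at $s=1$, and when $\chi_i/\chi_j=\bc(\chi)$ for some pair the corresponding factor contributes a genuine simple pole while every other factor, as well as $L^S(s,\chi)^2$, is holomorphic and nonvanishing at $s=1$. I do not expect a serious obstacle: the argument is a direct unwinding of the definitions of $\bc$ and $\AI$ on $\GL_1$ together with classical facts on the poles of Hecke and cuspidal $\GL_2$ $L$-functions, the only delicate points being the reduction that forces $\chi$ to be quadratic and the bookkeeping of $\chi$ versus $\chi^{-1}$ in the trivialising condition.
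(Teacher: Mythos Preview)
Your proof is correct and follows essentially the same line as the paper's, with one minor but pleasant variation in the uniqueness step. The paper rules out two contributing pairs by case analysis together with the preceding lemma (that an $\sbc$-character lies in $\on{im}(\bc)$ iff it is quadratic): it observes directly that $\chi_1/\chi_3\ne\chi_1/\chi_2,\chi_2/\chi_3$ by distinctness, and then argues that $\chi_1/\chi_2=\chi_2/\chi_3=\xi$ would force $\xi$ to be non-quadratic (else $\chi_1=\chi_3$) and hence not in $\on{im}(\bc)$. You instead first deduce $\chi^2=1$ from the polar relation (via $\bc(\chi)|_{\A^\times}=\chi^2$), after which $\bc\chi$ has order dividing two and all coincidences collapse immediately to an equality among the $\chi_i$. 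Your route extracts the extra information that a pole forces $\chi$ to be quadratic, in parallel with the remarks following Theorems \ref{thm:other poles; type3} and \ref{thm:other poles: type 2+1}; the paper's route avoids this intermediate step but leans more directly on the preceding lemma. Both arguments rest on the same underlying content.
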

\begin{proof}
This follows from the previous lemma. 
We must show that the equality $\sbc \chi_i/\chi_j = \bc \chi$
can not hold for more than one pair $(i,j).$ 
Since we know that the components
$\sbc(\chi_i) , 1\le i \le j$ are distinct, it follows that 
$\sbc \chi_1/\chi_3 \ne \sbc \chi_1/\chi_2, \sbc \chi_2/\chi_3.$ 

Moreover, if $\sbc \chi_1/\chi_2 = \sbc \chi_2/\chi_3 = \xi,$ then $\xi$ must not be quadratic, 
for if it were, then $\sbc\chi_1$ would equal $\sbc\chi_3.$ But then, 
since $\xi$ is in the image of $\sbc$ and not quadratic, it is not in the image of $\bc.$
\end{proof}

\end{document}